\tikzset{Box/.style={very thick, rounded corners}}
\tikzset{marked/.style={star, star point height = .75mm, star points =5, fill=black,minimum size=2mm, inner sep=0mm} }
\tikzset{verythickline/.style = {line width=7pt}}
\tikzset{thickline/.style = {line width=5pt}}
\tikzset{medthick/.style = {line width=3pt}}
\tikzset{med/.style = {line width=2pt}}
\tikzset{count/.style = {fill=white,circle,draw,thin, inner sep=2pt}}
\tikzset{rcount/.style = {fill=white,rectangle,draw,thin,inner sep=2pt, rounded corners}}
\tikzset{cpr/.style = {draw,fill=white,rectangle,thin, rounded corners}}
\definecolor{ggreen}{HTML}{00BB33}
\definecolor{ianeditcolour}{HTML}{2266FF}
\begin{document}

%%%%%%%%%%%%%%%%%%%%%%%%%%%%%%%%%%%%%%%%%%%%%%%%%%%%%%%%%%%%%%%%%%%
%                       Custom Commands                           %
%%%%%%%%%%%%%%%%%%%%%%%%%%%%%%%%%%%%%%%%%%%%%%%%%%%%%%%%%%%%%%%%%%%

%\renewcommand{\theenumii}{\roman{enumii}}
%\renewcommand{\labelenumii}{(\theenumii)}
\newcommand{\supp}{\text{supp}}
\newcommand{\Aut}{\text{Aut}}
\newcommand{\Gal}{\text{Gal}}
\newcommand{\Inn}{\text{Inn}}
\newcommand{\Irr}{\text{Irr}}
\newcommand{\Ker}{\text{Ker}}
\newcommand{\N}{\mathbb{N}}
\newcommand{\Z}{\mathbb{Z}}
\newcommand{\Q}{\mathbb{Q}}
\newcommand{\R}{\mathbb{R}}
\newcommand{\C}{\mathbb{C}}
\renewcommand{\H}{\mathcal{H}}
\newcommand{\B}{\mathcal{B}}
\newcommand{\A}{\mathcal{A}}
\newcommand{\K}{\mathcal{K}}
\newcommand{\M}{\mathcal{M}}

\newcommand{\J}{\mathscr{J}}
\newcommand{\D}{\mathscr{D}}

\newcommand{\ul}[1]{\underline{#1}}

\newcommand{\I}{\text{I}}
\newcommand{\II}{\text{II}}
\newcommand{\III}{\text{III}}

\newcommand{\<}{\left\langle}
\renewcommand{\>}{\right\rangle}
\renewcommand{\Re}[1]{\text{Re}\ #1}
\renewcommand{\Im}[1]{\text{Im}\ #1}
\newcommand{\dom}[1]{\text{dom}\,#1}
\renewcommand{\i}{\text{i}}
\renewcommand{\mod}[1]{(\operatorname{mod}#1)}
\newcommand{\mb}[1]{\mathbb{#1}}
\newcommand{\mc}[1]{\mathcal{#1}}
\newcommand{\mf}[1]{\mathfrak{#1}}
\newcommand{\im}{\operatorname{im}}

\newcommand{\lrleq}{\leq_{\mathrm{lat}}}
\newcommand{\lrgeq}{\geq_{\mathrm{lat}}}

\newcommand{\paren}[1]{\left(#1\right)}
\newcommand{\ang}[1]{\langle#1\rangle}
\newcommand{\set}[1]{\left\{#1\right\}}
\newcommand{\sq}[1]{\left[#1\right]}

%%%%%%%%%%%%%%%%%%%%%%%%%%%%%%%%%%%%%%%%%%%%%%%%%%%%%%%%%%%%%%%%%%
%                      Theorem Environments                      %
%%%%%%%%%%%%%%%%%%%%%%%%%%%%%%%%%%%%%%%%%%%%%%%%%%%%%%%%%%%%%%%%%%

\newtheorem{thm}{Theorem}[subsection]
\newtheorem{prop}[thm]{Proposition}
\newtheorem{lem}[thm]{Lemma}
\newtheorem{cor}[thm]{Corollary}
\newtheorem{innercthm}{Theorem}
\newenvironment{cthm}[1]
  {\renewcommand\theinnercthm{#1}\innercthm}
  {\endinnercthm}
\newtheorem{innerclem}{Lemma}
\newenvironment{clem}[1]
  {\renewcommand\theinnerclem{#1}\innerclem}
  {\endinnerclem}

\theoremstyle{definition}
\newtheorem{defi}[thm]{Definition}
\newtheorem{ex}[thm]{Example}
\newtheorem*{exs}{Examples}
\newtheorem{rem}[thm]{Remark}
\newtheorem{innercdefi}{Definition}
\newenvironment{cdefi}[1]
  {\renewcommand\theinnercdefi{#1}\innercdefi}
  {\endinnercdefi}
\newtheorem{cons}[thm]{Construction}

%%%%%%%%%%%%%%%%%%%%%%%%%%%%%%%%%%%%%%%%%%%%%%%%%%%%%%%%%%%%%%%%%%%%%%%%%
%                        	  Title Page                                 %
%%%%%%%%%%%%%%%%%%%%%%%%%%%%%%%%%%%%%%%%%%%%%%%%%%%%%%%%%%%%%%%%%%%%%%%%%

\title{On two-faced families of non-commutative random variables}

\author{Ian Charlesworth}
\author{Brent Nelson}
%\address{Department of Mathematics, UCLA, Los Angeles, California, 90095, USA}
\author{Paul Skoufranis}
%\address{Department of Mathematics, UCLA, Los Angeles, California, 90095, USA}
\address{Department of Mathematics, UCLA, Los Angeles, California, 90095, USA}
\email{ilc@math.ucla.edu\\bnelson6@math.ucla.edu\\pskoufra@math.ucla.edu}
\thanks{This research was supported in part by NSF grants DMS-1161411, DMS-0838680 and by NSERC
PGS-6799-438440-2013.}

\maketitle

\begin{abstract}
We demonstrate that the notions of bi-free independence and combinatorial-bi-free independence of two-faced families are equivalent using a diagrammatic view of bi-non-crossing partitions.
These diagrams produce an operator model on a Fock space suitable for representing any two-faced family of non-commutative random variables.
Furthermore, using a Kreweras complement on bi-non-crossing partitions we establish the expected formulas for the multiplicative convolution of a bi-free pair of two-faced families.
\end{abstract}

\section{Introduction}

Free probability for pairs of faces, or simply bi-free probability, was introduced by Voiculescu in \cite{voiculescu2013freei} as a generalization of the notion of free probability to allow the simultaneous study of ``left-handed'' and ``right-handed'' variables. Prior to this work, the left and right actions were only considered separately.
Voiculescu demonstrated that many results in free probability, such as the existence of the free cumulants and the free central limit theorem, have direct analogues in the bi-free setting.
However, free independence is equivalent to a variety of computational conditions, such as vanishing alternating moments of centered variables, or vanishing mixed cumulants.
It was shown in Proposition 5.6 of \cite{voiculescu2013freei} that such computational conditions for bi-freeness exist as a collection of universal polynomials on the mixed moments of a bi-free pair of two-faced families, but their explicit formulas were unknown.

Seeking an alternate approach to bi-free probability, Mastnak and Nica in \cite{1312.0269} defined the $(\ell, r)$-cumulant functions, which they predicted to be the universal polynomials of Voiculescu.
Such cumulant functions were defined by considering permutations applied to non-crossing diagrams.
Taking inspiration from the free case, they defined defined a pair of two-faced families $z'$ and $z''$ to be combinatorially-bi-free if all mixed cumulants are zero, and posed the question of whether their definition was equivalent to the definition of bi-free independence of Voiculescu.
%However, since the equivalence of the two approaches was not clear, \cite{1312.0269} defined a pair of two-faced families $z'$ and $z''$ to be combinatorially-bi-free if all mixed cumulants are zero.
% posed the question, ''Is a two-faced family bi-freely independent if and only if it is combinatorially-bi-free independent?"

In this paper, we will provide an affirmative answer to their question, demonstrating the equivalence of bi-free independence and combinatorial-bi-free independence.
%Analyzing \cite{1312.0269}, one can see an alternate diagrammatic view of the desired partitions which is more natural to the study of bi-free probability.
Analyzing \cite{1312.0269}, one can take a diagrammatic view of the desired partitions which is more natural to the study of two-faced families of non-commutatitve random variables.
In Section \ref{sec:prelim}, after some preliminaries, we introduce this view via the notion of bi-non-crossing partitions.
Such partitions are designed to encapsulate information about whether a variable should be considered on the left or on the right.
One main goal of this paper is to demonstrate that bi-non-crossing partitions play the same role in bi-free probability as non-crossing partitions play in free probability.

Following Speicher in \cite{speicher1994}, we introduce the incidence algebra on bi-non-crossing partitions in Section \ref{sec:incalg}.
The algebra enables an analysis of left and right variables simultaneously, and provides a method of M\"{o}bius inversion.
This allows us to directly obtain the bi-free cumulant functions.

In Section \ref{sec:unify} we will prove our main theorem, Theorem \ref{main_theorem}, which demonstrates that the two notions of bi-free independence are equivalent.
To do so, we analyze the action of operators on free product spaces as in \cite{voiculescu2013freei} to obtain explicit descriptions of Voiculescu's universal polynomials.
%As such, the remainder of the proof shows these universal polynomials can be expressed using the bi-non-crossing M\"{o}bius function via an appeal to results of free probability.
We given equivalent formulae for these polynomials using the bi-non-crossing M\"{o}bius function.

Using the combinatorially-bi-free approach, we will develop further results.
In Section \ref{sec:mult} we will describe a multiplicative free convolution of two-faced families.
By extending the Kreweras complement approach of \cite{nicaspeicher1997} to bi-non-crossing diagrams, we show that the bi-free cumulants of a product of two-faced families can be written as a convolution of the individual bi-free cumulants.

Finally, in Section \ref{sec:model} we construct an operator model in the linear operators on a Fock space for a two-faced family of non-commutative random variables.
This generalizes the model from \cite{Nica1996271} and provides a bi-free analogue of Voiculescu's non-commutative $R$-series.

%%%%%%%%%%%%%%%%%%%%%%%%%%%%%%%%%%%%%%%%%%%%%%%%%%%%%%%%%%%%%%%%%%%%%%%%%
%					Preliminaries						   %
%%%%%%%%%%%%%%%%%%%%%%%%%%%%%%%%%%%%%%%%%%%%%%%%%%%%%%%%%%%%%%%%%%%%%%%%%

\section{Preliminaries}
\label{sec:prelim}

%%%%%%%%%%%%%%%%%%%%%%%%%%%%%%%%%%%%%%%%%%%%%%%%%%%%%%%%%%%%%%%%%%%%%%%
\subsection{Free probability for pairs of faces}
\label{voiculescu}

Throughout, $z=((z_i)_{i\in I}, (z_j)_{j\in J})$ will denote a two-faced family in a non-commutative probability space $(\mc{A},\varphi)$ with the left face indexed by $I$, the right face indexed by $J$, and $I$ and $J$ disjoint. 
We will also let $z'$ and $z''$ be two-faced families, similarly indexed. 

Recall that in \cite{voiculescu2013freei}, $z'$ and $z''$ are said to be \emph{bi-freely independent} (or simply \emph{bi-free}) if there exists a free product $(\mc{X}, p,\xi)=(\mc{X}',p',\xi')*(\mc{X}'',p'', \xi'')$ of vector spaces with specified state-vectors and unital homomorphisms
	\begin{align*}
		&l^\epsilon\colon \C\<z_i^\epsilon\colon i\in I\>\rightarrow \mc{L}(\mc{X}^\epsilon),\qquad\   \text{and}\\		
		&r^\epsilon\colon \C\<z_j^\epsilon\colon j\in J\>\rightarrow \mc{L}(\mc{X}^\epsilon),\qquad \epsilon\in\{',''\},
	\end{align*}
such that the two-faced families $T^\epsilon=( (\lambda^\epsilon\circ l^\epsilon(z_i^\epsilon))_{i\in I}, (\rho^\epsilon\circ r^\epsilon(z_j^\epsilon))_{j\in J})$ with $\epsilon\in\{',''\}$ have the same joint distribution in $(\mc{L}(\mc{X}),\varphi)$ as $z'$ and $z''$.
Here $\lambda^\epsilon$ and $\rho^\epsilon$ are the left and right representations of $\mc{L}(\mc{X}^\epsilon)$ in $\mc{L}(\mc{X})$ (\emph{cf.} Section 1.9 in \cite{voiculescu2013freei}).
For $T\in \mc{L}(\mc{X}^\epsilon)$, we will often repress the $\epsilon$ notation on $\lambda^\epsilon$, $\rho^\epsilon$, and $\varphi^\epsilon$ (the state on $\mc{L}(\mc{X}^\epsilon)$ induced by $p^\epsilon$) as it will be clear which is meant by noting which vector space $T$ is defined on.

Given $\alpha\colon\{1,\ldots, n\}\rightarrow I\sqcup J$, we will refer to the ``$\alpha$-moment'' of a two-faced family $z$:
	\begin{align*}
		\varphi_\alpha(z):=\varphi(z_{\alpha(1)}\cdots z_{\alpha(n)}).
	\end{align*}

It was shown in Theorem 5.7 of \cite{voiculescu2013freei} that for each $\alpha$ there exists a universal polynomial $R_\alpha$ on indeterminates $X_K$ indexed by non-empty subsets $K\subset \{1,\ldots, n\}$ satisfying:
	\begin{enumerate}
	\item[(i)] $R_\alpha=X_{\{1,\ldots, n\}} + \tilde{R}_\alpha$, where $\tilde{R}_\alpha$ is a polynomial on indeterminates $X_K$ indexed by non-empty strict subsets $K\subsetneq\{1,\ldots, n\}$;
	
	\item[(ii)] $R_\alpha$ and $\tilde{R}_\alpha$ are homogeneous of degree $n$ when $X_K$ is given degree $|K|$; and
	
	\item[(iii)] if $R_\alpha(z)$ denotes $R_\alpha$ evaluated at $X_K=\varphi(z_{\alpha(k_1)}\cdots z_{\alpha(k_r)})$ with $K=\{k_1<\cdots <k_r\}$, then
		\begin{align*}
			R_\alpha(z'+z'')=R_\alpha(z')+R_\alpha(z''),
		\end{align*}
	when $z'$ and $z''$ are bi-free two-faced families.
	\end{enumerate}
The number $R_\alpha(z)$ is called the \emph{$\alpha$-cumulant of $z$}. Property (iii) above is referred to as the \emph{cumulant property}.

%%%%%%%%%%%%%%%%%%%%%%%%%%%%%%%%%%%%%%%%%%%%%%%%%%%%%%%%%%%%%%%%%%%%%%%%%
\subsection{Partitions, ordering, and non-crossing partitions}

A partition $\pi$ is a set $\pi=\{V_1,\ldots, V_k\}$, where $V_1,\ldots, V_k$ (called the \emph{blocks of $\pi$}) are non-empty sets satisfying $V_i\cap V_j=\emptyset$ for $i\neq j$ and $\bigcup_{i=1}^k V_i=\{1,\ldots, n\}$.
%We let $\min(V_i)$ and $\max(V_i)$ denote the minimum and maximum elements of $V_i$, respectively, and
We traditionally order the blocks of $\pi$ so that $\min(V_1)<\cdots <\min(V_k)$.
Let $\mc{P}(n)$ denote the set of partitions of $\{1,\ldots, n\}$.

For $\pi,\sigma\in \mc{P}(n)$ we say $\pi$ is a \emph{refinement} of $\sigma$ and write $\pi\leq \sigma$ if every block of $\pi$ is contained in a block of $\sigma$. This defines a partial ordering on $\mc{P}(n)$ with minimum and maximum elements
	\begin{align*}
		0_n:=\left\{\vphantom{\sum} \{1\},\ldots, \{n\} \right\}, \qquad 1_n:=\left\{\vphantom{\sum} \{1,\ldots, n\} \right\},
	\end{align*}
respectively. We will also consider the following action of the symmetric group $S_n$ on $\mc{P}(n)$: if $\pi=\{V_1,\ldots, V_k\} \in \mc{P}(n)$ and $s\in S_n$ then
	\begin{align*}
		s\cdot \pi=\{s(V_1),\ldots, s(V_k)\} \in \mc{P}(n).
	\end{align*}
Observe that this action is order-preserving.
	
A partition $\pi \in \mc{P}(n)$ is said to be \emph{non-crossing} if for any two distinct blocks $V=\{v_1<\ldots< v_r\}, W=\{w_1<\ldots< w_s\}\in \pi$ we have $v_l<w_1<v_{l+1}$ if and only if $v_l<w_s<v_{l+1}$ ($l\in\{1,\ldots, r-1\}$). The term ``non-crossing'' refers to the fact that any such partition can be represented as a non-crossing diagram. For example, the non-crossing partition $\left\{\vphantom{\sum} \{1,5, 6\},\{2,3,4\}, \{7\} \right\}\in \mc{P}(7)$ corresponds to the diagram
	\begin{align*}
	\begin{tikzpicture}[baseline]
	\draw[thick] (1,0) -- (1,1) -- (6,1) -- (6,0);
	\draw[thick] (5,0) -- (5,1);
	\draw[thick] (2,0) -- (2,.5) -- (4,.5) -- (4,0);
	\draw[thick] (3,0) -- (3,.5);
	\node[below] at (1, 0) {1};
	\draw[fill=black] (1,0) circle (0.05);
	\node[below] at (2, 0) {2};
	\draw[fill=black] (2,0) circle (0.05);
	\node[below] at (3, 0) {3};
	\draw[fill=black] (3,0) circle (0.05);
	\node[below] at (4, 0) {4};
	\draw[fill=black] (4,0) circle (0.05);
	\node[below] at (5, 0) {5};
	\draw[fill=black] (5,0) circle (0.05);
	\node[below] at (6, 0) {6};
	\draw[fill=black] (6,0) circle (0.05);
	\node[below] at (7, 0) {7};
	\draw[fill=black] (7,0) circle (0.05);
	\end{tikzpicture}
	\end{align*}
We denote set of non-crossing partitions in $\mc{P}(n)$ by $NC(n)$.

The horizontal segments connected the nodes of a block $V\in \pi$ will be referred to as the \emph{spine of $V$}, and the segements connecting the nodes to the spine of $V$ will be referred to as the \emph{ribs of $V$}. In the following diagrammatic representation of $\{\{1,4\}, \{2,3\} \}\in NC(4)$ we have highlighted the spine of $\{1,4\}$ in red and its ribs in green:
	\begin{align*}
	\begin{tikzpicture}[baseline]
	\draw[thick,red] (1,.5) -- (4,.5);
	\draw[thick, ggreen] (1,0) -- (1,.5);
	\draw[thick, ggreen] (4,0) -- (4,.5);
	\draw[thick] (2,0) -- (2,.25) -- (3,.25) -- (3,0);
	\node[below] at (1,0) {1};
	\draw[fill=black] (1,0) circle (0.05);
	\node[below] at (2,0) {2};
	\draw[fill=black] (2,0) circle (0.05);
	\node[below] at (3,0) {3};
	\draw[fill=black] (3,0) circle (0.05);
	\node[below] at (4,0) {4};
	\draw[fill=black] (4,0) circle (0.05);
	\end{tikzpicture}
	\end{align*}
For a singleton block $V\in \pi$, $|V|=1$, the spine of $V$ will simply refer to the corresponding node itself.

%%%%%%%%%%%%%%%%%%%%%%%%%%%%%%%%%%%%%%%%%%%%%%%%%%%%%%%%%%%%%%%%%%%%%%%
\subsection{Combinatorial-bi-free independence}
\label{mastnak_nica}

For consistency, we note the following definitions of Mastnak and Nica.
Given $\chi : \set{1, \ldots, n} \to \set{\ell, r}$ let $\{i_1<\cdots <i_p\}=\chi^{-1}(\ell)$ and $\{j_1<\cdots<j_{n-p}\}=\chi^{-1}(r)$ and consider $\sigma_\chi\in S_n$ defined by
\begin{align*}
	\sigma_\chi(k)=\left\{\begin{array}{cl} i_k & \text{if }k\leq p\\
	     j_{n+1-k} & \text{if }k>p\end{array}\right..
\end{align*}
The class of partitions $\mc P^{(\chi)}(n) \subset \mc P(n)$ is defined as
$$\mc P^{(\chi)}(n) := \set{\sigma_\chi\cdot \pi | \pi \in NC(n) }.$$

\begin{defi}[Definition 5.2 of \cite{1312.0269}]
Let $(\mathcal{A}, \varphi)$ be a non-commutative probability space.
There exists a family of multilinear functionals 
\[
\left( \kappa_\chi : \mathcal{A}^n \to \mathbb{C}\right)_{n\geq 1, \chi: \{1,\ldots, n\} \to \{\ell, r\}}
\]
which are uniquely determined by the requirement
\[
\varphi(z_1 \cdots z_n) = \sum_{\pi \in \mathcal{P}^{(\chi)}(n)} \left( \prod_{V \in \pi} \kappa_{\chi|_V}((z_1, \ldots, z_n)|V)   \right)
\]
for every $n \geq 1$, $\chi \in \{\ell, r\}^n$, and $z_1 ,\ldots, z_n \in \mathcal{A}$.
These $\kappa_\chi$'s will be called the $(\ell, r)$-cumulant functionals of $(\mathcal{A}, \varphi)$.
\end{defi}

\begin{defi}[\cite{1312.0269}]
Let $z'$ and $z''$ each be two-faced families in $(\mc{A},\varphi)$. We say that $z'$ and $z''$ are \emph{combinatorially-bi-free} if 
\[
\kappa_\chi\left(z_{\alpha(1)}^{\epsilon_1}, \ldots, z_{\alpha(n)}^{\epsilon_n}\right) = 0
\]
whenever $\alpha : \{1,\ldots, n\} \to I \sqcup J$, $\chi: \{1,\ldots, n\} \to \{\ell, r\}$ is such that $\alpha^{-1}(I) = \chi^{-1}(\{\ell\})$, and $\epsilon\in\{',''\}^n$ is non-constant.
\end{defi}

\begin{rem}
Note that the condition $\alpha^{-1}(I) = \chi^{-1}(\{\ell\})$ completely determines $\chi$ and so we may denote
\[
	\kappa_\alpha(z):=\kappa_\chi\left(z_{\alpha(1)}, \ldots, z_{\alpha(n)}\right).
\]
Then if $z'$ and $z''$ are combinatorially-bi-free, it is easy to see that
\[
	\kappa_\alpha(z'+z'')=\kappa_\alpha(z')+\kappa_\alpha(z'');
\]
that is, $\kappa_\alpha$ has the cumulant property.
\end{rem}

%%%%%%%%%%%%%%%%%%%%%%%%%%%%%%%%%%%%%%%%%%%%%%%%%%%%%%%%%%%%%%%%%%%%%%%%%%%
\subsection{Bi-non-crossing partitions}
\label{sec:bncp}

For $\alpha\colon \{1,\ldots, n\}\rightarrow I\sqcup J$ we let $\{i_1<\cdots <i_p\}=\alpha^{-1}(I)$ and $\{j_1<\cdots<j_{n-p}\}=\alpha^{-1}(J)$ and consider $s_\alpha\in S_n$ defined by
	\begin{align*}
		s_\alpha(k)=\left\{\begin{array}{cl} i_k & \text{if }k\leq p\\
									     j_{n+1-k} & \text{if }k>p\end{array}\right..
	\end{align*}
We say a partition $\pi\in\mc{P}(n)$ is \emph{bi-non-crossing} (with respect to $\alpha$) if $s_\alpha^{-1}\cdot \pi\in NC(n)$. We denote the set of such partitions by $BNC(\alpha)$.
The minimum and maximum elements of $BNC(\alpha)$ are given by $0_\alpha := s_\alpha\cdot 0_n$ and $1_\alpha := s_\alpha\cdot1_n$, respectively.

To each partition $\pi \in BNC(\alpha)$ we can associate a ``bi-non-crossing diagram'' as follows. For each $k=1,\ldots, n$ place a node labeled $k$ at the position $(-1,n-k)$ if $\alpha(k)\in I$ and at the position $(1,n-k)$ if $\alpha(k)\in J$. Connect nodes whose labels form a block of $\pi$ similar to how one would for the diagrams associated to $NC(n)$, except now the spines of blocks are vertically oriented and the ribs extend horizontally from the spine to the left or right, emphasizing the left-right nature of a two-faced family.

\begin{ex}
\label{ex:paulsneed}
If $\alpha^{-1}(I)=\{1,2,4\}$, $\alpha^{-1}(J)=\{3,5\}$, and
	\begin{align*}
		\pi=\left\{\vphantom{\sum}\{1,3\}, \{2,4,5\} \right\}= s_\alpha\cdot \left\{\vphantom{\sum} \{1,5\}, \{2,3,4\} \right\},
	\end{align*}
then the bi-non-crossing diagram associated to $\pi$ is
	\begin{align*}
	\begin{tikzpicture}[baseline]
	\node[ left] at (-.5, 2) {1};
	\draw[fill=black] (-.5,2) circle (0.05);
	\node[left] at (-.5, 1.5) {2};
	\draw[fill=black] (-.5,1.5) circle (0.05);
	\node[right] at (.5, 1) {3};
	\draw[fill=black] (.5,1) circle (0.05);
	\node[left] at (-.5, .5) {4};
	\draw[fill=black] (-.5,.5) circle (0.05);
	\node[right] at (.5,0) {5};
	\draw[fill=black] (.5,0) circle (0.05);
	\draw[thick] (-.5,2) -- (0,2) -- (0, 1) -- (.5,1);
	\draw[thick] (-.5,1.5) -- (-.25,1.5) -- (-.25, 0) -- (.5,0);
	\draw[thick] (-.5,.5) -- (-.25,.5);
	\end{tikzpicture}
	\end{align*}
\end{ex}

That the diagram can always be drawn to be non-crossing is easily seen through its relationship to the diagram of $s_\alpha^{-1}\cdot \pi\in NC(n)$. Indeed, rotate the line $x=-1$ counter-clockwise a quarter turn about the point $(-1,0)$, rotate the line $x=1$ clockwise a quarter turn about the point $(1,0)$, and adjust the spines and ribs so that they remain connected. Then after relabeling node $k$ as $s_\alpha^{-1}(k)$ the resulting diagram is precisely the one associated to $s_\alpha^{-1}\cdot \pi$ as an element of $NC(n)$ (modulo some extra space between the nodes). Performing this operation to the above diagram yields
	\begin{align*}
	\begin{tikzpicture}[baseline]
	\node[below] at (0,0) {1};
	\draw[fill=black] (0,0) circle (0.05);
	\node[below] at (1,0) {2};
	\draw[fill=black] (1,0) circle (0.05);
	\node[below] at (3,0) {3};
	\draw[fill=black] (3,0) circle (0.05);
	\node[below] at (5,0) {4};
	\draw[fill=black] (5,0) circle (0.05);
	\node[below] at (7,0) {5};
	\draw[fill=black] (7,0) circle (0.05);
	\draw[thick] (0,0) -- (0,1) -- (7,1) -- (7,0);
	\draw[thick] (1,0) -- (1,.5)-- (5,.5) -- (5,0);
	\draw[thick] (3,0) -- (3,.5);
	\end{tikzpicture}
	\end{align*}
Conversely given the diagram corresponding to $\sigma\in NC(n)$ we obtain the diagram for $\pi=s_\alpha\cdot \sigma$ as follows. Initially, the nodes occupy positions $(1,0),\ldots, (n,0)$, so we first widen the space between nodes so that node $k$ now occupies position $(s_\alpha(k),0)$ if $k\leq |\alpha^{-1}(I)|$ and position $(n+1-s_\alpha(k),0)$ if $k>|\alpha^{-1}(I)|$. Given the definition of $s_\alpha$, it is clear that this does not change the order of the nodes. Next, we rotate the segment from $(1,0)$ to $(n,0)$ clockwise a quarter turn about $(n,0)$, we rotate the segment from $(n+1,0)$ to $(2n, 0)$ counter-clockwise a quarter turn about $(n+1,0)$, and homotopically vary the spines and ribs so that they remain connected. Relabeling node $k$ as node $s_\alpha(k)$ then yields the diagram corresponding to $\pi$.

\begin{rem}
\label{respect_to_nica}
Given $\alpha\colon\{1,\ldots, n\}\rightarrow I\sqcup J$, define $\chi\in \{\ell,r\}^n $ by $\chi_k=\ell$ if $\alpha(k)\in I$ and $\chi_k=r$ if $\alpha(k)\in J$. Then $BNC(\alpha)$ is precisely the class of partitions $\mc{P}^{(\chi)}(n)$ defined in \cite{1312.0269} since $s_\alpha$ defined above is exactly the permutation $\sigma_\chi$ used to define the class $\mc{P}^{(\chi)}(n)$.
Moreover, the notation $BNC(\alpha)$ suggests that the lattice of partitions depends on $\alpha$ more than it actually does.
In fact, if $\beta\colon\{1,\ldots, n\}\rightarrow I\sqcup J$ is such that $\beta(j)$ and $\alpha(j)$ are in the same face for each $j=1,\ldots, N$, then $BNC(\alpha)=BNC(\beta)$
Because of this we may write $BNC(\chi)$ for $BNC(\alpha)$.
In order to emphasize the diagrammatic viewpoint pervading this paper, we will continue to use the alternate notation of $BNC(\alpha)$ for this class of partitions.
\end{rem}

%%%%%%%%%%%%%%%%%%%%%%%%%%%%%%%%%%%%%%%%%%%%%%%%%%%%%%%%%%%%%%%%%%%%%%%%
\subsection{Shaded bi-non-crossing diagrams and partitions}

Let $z'$ and $z''$ be a bi-free pair of two-faced families. Let $\chi\colon\set{1,\ldots, n}\to \{\ell,r\}$ and $\epsilon\in \{',''\}^n$. We recursively define a collection of diagrams $LR(\chi,\epsilon)$.
For $n=1$, $LR(\chi,\epsilon)$ consists of two parallel, vertical, transparent segments with a single node on the left segment if $\chi(1)=\ell$ or a single node on the right segment if $\chi(1)=r$. We assign a shade to $'$ and $''$ and shade this node the shade associated to $\epsilon_1$. Then either this node remains isolated or a rib and spine of the node's shade are drawn connecting to the top of the diagram, between the two segments.
For convenience, we will refer to the space between the two vertical segments at the top of a diagram as its \emph{top gap}, through which strings may exit.

For $n > 1$ we define $LR(\chi,\epsilon)$ as follows.
Let $\chi_0=\chi\mid_{\{2,\ldots,n\}}$ and $\epsilon_0=(\epsilon_2,\ldots, \epsilon_n)$. Then a diagram of $LR(\chi,\epsilon)$ is an extension of a diagram $D\in LR(\chi_0,\epsilon_0)$: place an additional $\epsilon_1$-shaded node $p$ above $D$, on the left if $\chi(1)=\ell$ and on the right otherwise.
Extend any spines from $D$ to the new top gap.
If at least one spine was extended and the one nearest $p$ shares its shade, then connect it to $p$ with a rib and optionally terminate the spine at $p$.
Otherwise, either connect $p$ with a rib to a new spine extending to the top gap or leave $p$ isolated.

Given its impact on the diagrams, we refer to $\epsilon \in \{',''\}^n$ as a \emph{choice of shading} or simply a \emph{shading}.

Note that each diagram in $LR(\chi,\epsilon)$ is created from a unique diagram in $LR(\chi_0,\epsilon_0)$, which we can recover by simply erasing the top portion of the diagram.
Also, these rules imply that among the chords extending to the top gap, adjacent chords will always be of differing shades.
We take the convention that the nodes are labeled numerically from top to bottom.

For $0\leq k\leq n$, let $LR_k(\chi,\epsilon)\subseteq LR(\chi,\epsilon)$ consist of those diagrams with precisely $k$ chords extending to the top gap. Then $LR(\chi,\epsilon)=\bigcup_k LR_k(\chi,\epsilon)$.

We consider a few examples. In each example, we assign the shade red to $'$ and the shade green to $''$ and have a dashed line in place of the normally transparent left and right segments.

\begin{ex}\label{2_node_ex}
Consider $\chi=(\ell,r)$ and $\epsilon=(','')$.
Then $LR(\chi,\epsilon)$ consists of the following diagrams:
	\begin{align*}
		\begin{tikzpicture}[baseline]
			\node[left] at (-.75,.25) {$D_1=$};
			\draw[thick,dashed] (-.5,-.25) -- (-.5,.75);
			\draw[thick,dashed] (.5,-.25) -- (.5,.75);
			\node[left] at (-.5,.5) {1};
			\draw[red,fill=red] (-.5,.5) circle (0.05);
			\node[right] at (.5,0) {2};
			\draw[ggreen,fill=ggreen] (.5,0) circle (0.05);
		\end{tikzpicture}
		\qquad\qquad
		\begin{tikzpicture}[baseline]
			\node[left] at (-.75,.25) {$D_2=$};
			\draw[thick,dashed] (-.5,-.25) -- (-.5,.75);
			\draw[thick,dashed] (.5,-.25) -- (.5,.75);
			\node[left] at (-.5,.5) {1};
			\draw[red,fill=red] (-.5,.5) circle (0.05);
			\draw[thick,red] (-.5,.5) -- (0,.5) -- (0, .75);
			\node[right] at (.5,0) {2};
			\draw[ggreen,fill=ggreen] (.5,0) circle (0.05);
		\end{tikzpicture}
		\qquad\qquad
		\begin{tikzpicture}[baseline]
			\node[left] at (-.75,.25) {$D_3=$};
			\draw[thick,dashed] (-.5,-.25) -- (-.5,.75);
			\draw[thick,dashed] (.5,-.25) -- (.5,.75);
			\node[left] at (-.5,.5) {1};
			\draw[red,fill=red] (-.5,.5) circle (0.05);
			\node[right] at (.5,0) {2};
			\draw[ggreen,fill=ggreen] (.5,0) circle (0.05);
			\draw[thick,ggreen] (.5,0) -- (0,0) -- (0,.75);
		\end{tikzpicture}
		\qquad\qquad
		\begin{tikzpicture}[baseline]
			\node[left] at (-.75,.25) {$D_4=$};
			\draw[thick,dashed] (-.5,-.25) -- (-.5,.75);
			\draw[thick,dashed] (.5,-.25) -- (.5,.75);
			\node[left] at (-.5,.5) {1};
			\draw[red,fill=red] (-.5,.5) circle (0.05);
			\draw[thick,red] (-.5,.5) -- (-.1, .5)-- (-.1,.75);
			\node[right] at (.5,0) {2};
			\draw[ggreen,fill=ggreen] (.5,0) circle (0.05);
			\draw[thick,ggreen] (.5,0) -- (.1,0) -- (.1,.75);
		\end{tikzpicture}
	\end{align*}
Also $LR_0(\chi,\epsilon)=\{D_1\}$, $LR_1(\chi,\epsilon)=\{D_2,D_3\}$, and $LR_2(\chi,\epsilon)=\{D_4\}$.
\end{ex}

\begin{ex}\label{3_node_ex}
For a slightly more robust example we consider $\chi=(r,\ell,r)$ and $\epsilon=(',','')$. Then $LR(\chi,\epsilon)$ consists of the following diagrams:
	\begin{align*}
		\begin{tikzpicture}[baseline]
			\node[left] at (-.75,.5) {$E_1=$};
			\draw[thick,dashed] (-.5,-.25) -- (-.5,1.25);
			\draw[thick,dashed] (.5,-.25) -- (.5,1.25);
			\draw[red,fill=red] (.5, 1) circle (0.05);
			\draw[red,fill=red] (-.5,.5) circle (0.05);
			\draw[ggreen,fill=ggreen] (.5,0) circle (0.05);
			\node[right] at (.5,1) {1};
			\node[left] at (-.5,.5) {2};
			\node[right] at (.5,0) {3};
		\end{tikzpicture}
		\qquad\qquad
		\begin{tikzpicture}[baseline]
			\node[left] at (-.75,.5) {$E_2=$};
			\draw[thick,dashed] (-.5,-.25) -- (-.5,1.25);
			\draw[thick,dashed] (.5,-.25) -- (.5,1.25);
			\draw[red,fill=red] (.5, 1) circle (0.05);
			\draw[thick,red] (.5,1) -- (0,1) -- (0,1.25);
			\draw[red,fill=red] (-.5,.5) circle (0.05);
			\draw[ggreen,fill=ggreen] (.5,0) circle (0.05);
			\node[right] at (.5,1) {1};
			\node[left] at (-.5,.5) {2};
			\node[right] at (.5,0) {3};
		\end{tikzpicture}
		\qquad\qquad		
		\begin{tikzpicture}[baseline]
			\node[left] at (-.75,.5) {$E_3=$};
			\draw[thick,dashed] (-.5,-.25) -- (-.5,1.25);
			\draw[thick,dashed] (.5,-.25) -- (.5,1.25);
			\draw[red,fill=red] (.5, 1) circle (0.05);
			\draw[thick,red] (.5,1) -- (0,1);
			\draw[red,fill=red] (-.5,.5) circle (0.05);
			\draw[thick,red] (-.5,.5)--(0,.5)--(0,1);
			\draw[ggreen,fill=ggreen] (.5,0) circle (0.05);
			\node[right] at (.5,1) {1};
			\node[left] at (-.5,.5) {2};
			\node[right] at (.5,0) {3};
		\end{tikzpicture}
		\qquad\qquad
		\begin{tikzpicture}[baseline]
			\node[left] at (-.75,.5) {$E_4=$};
			\draw[thick,dashed] (-.5,-.25) -- (-.5,1.25);
			\draw[thick,dashed] (.5,-.25) -- (.5,1.25);
			\draw[red,fill=red] (.5, 1) circle (0.05);
			\draw[thick,red] (.5,1) -- (0,1) -- (0,1.25);
			\draw[red,fill=red] (-.5,.5) circle (0.05);
			\draw[thick,red] (-.5,.5)--(0,.5)--(0,1);
			\draw[ggreen,fill=ggreen] (.5,0) circle (0.05);
			\node[right] at (.5,1) {1};
			\node[left] at (-.5,.5) {2};
			\node[right] at (.5,0) {3};
		\end{tikzpicture}
		\\
		\\
		\begin{tikzpicture}[baseline]
			\node[left] at (-.75,.5) {$E_5=$};
			\draw[thick,dashed] (-.5,-.25) -- (-.5,1.25);
			\draw[thick,dashed] (.5,-.25) -- (.5,1.25);
			\draw[red,fill=red] (.5, 1) circle (0.05);
			\draw[red,fill=red] (-.5,.5) circle (0.05);
			\draw[ggreen,fill=ggreen] (.5,0) circle (0.05);
			\draw[thick, ggreen] (.5,0) -- (0,0) -- (0, 1.25);
			\node[right] at (.5,1) {1};
			\node[left] at (-.5,.5) {2};
			\node[right] at (.5,0) {3};
		\end{tikzpicture}
		\qquad\qquad
		\begin{tikzpicture}[baseline]
			\node[left] at (-.75,.5) {$E_6=$};
			\draw[thick,dashed] (-.5,-.25) -- (-.5,1.25);
			\draw[thick,dashed] (.5,-.25) -- (.5,1.25);
			\draw[red,fill=red] (.5, 1) circle (0.05);
			\draw[thick,red] (.5,1) -- (.25, 1) -- (.25,1.25);
			\draw[red,fill=red] (-.5,.5) circle (0.05);
			\draw[ggreen,fill=ggreen] (.5,0) circle (0.05);
			\draw[thick, ggreen] (.5,0) -- (0,0) -- (0, 1.25);
			\node[right] at (.5,1) {1};
			\node[left] at (-.5,.5) {2};
			\node[right] at (.5,0) {3};
		\end{tikzpicture}
		\qquad\qquad
		\begin{tikzpicture}[baseline]
			\node[left] at (-.75,.5) {$E_7=$};
			\draw[thick,dashed] (-.5,-.25) -- (-.5,1.25);
			\draw[thick,dashed] (.5,-.25) -- (.5,1.25);
			\draw[red,fill=red] (.5, 1) circle (0.05);
			\draw[red,fill=red] (-.5,.5) circle (0.05);
			\draw[thick,red] (-.5,.5) -- (-.25,.5) -- (-.25,1.25);
			\draw[ggreen,fill=ggreen] (.5,0) circle (0.05);
			\draw[thick, ggreen] (.5,0) -- (0,0) -- (0, 1.25);
			\node[right] at (.5,1) {1};
			\node[left] at (-.5,.5) {2};
			\node[right] at (.5,0) {3};
		\end{tikzpicture}
		\qquad\qquad
		\begin{tikzpicture}[baseline]
			\node[left] at (-.75,.5) {$E_8=$};
			\draw[thick,dashed] (-.5,-.25) -- (-.5,1.25);
			\draw[thick,dashed] (.5,-.25) -- (.5,1.25);
			\draw[red,fill=red] (.5, 1) circle (0.05);
			\draw[thick,red] (.5,1) -- (.25,1) -- (.25,1.25);
			\draw[red,fill=red] (-.5,.5) circle (0.05);
			\draw[thick,red] (-.5,.5) -- (-.25,.5) -- (-.25,1.25);
			\draw[ggreen,fill=ggreen] (.5,0) circle (0.05);
			\draw[thick, ggreen] (.5,0) -- (0,0) -- (0, 1.25);
			\node[right] at (.5,1) {1};
			\node[left] at (-.5,.5) {2};
			\node[right] at (.5,0) {3};
		\end{tikzpicture}
	\end{align*}
Observe in terms of the recursive construction of $LR(\chi,\epsilon)$, the diagram $D_k$, $k=1, 2, 3, 4$ from Example \ref{2_node_ex} creates diagrams $E_{2k-1}$ and $E_{2k}$ in the present example.
\end{ex}

For fixed $\chi$ and $\epsilon$ we note that each $D\in LR_0(\chi,\epsilon)$ can be associated to a partition $\pi\in\mc{P}(n)$ by forming blocks according to which nodes are connected via chords in the diagram. Since $D\in LR_0(\chi,\epsilon)$ is completely determined by the connections between nodes, distinct diagrams yield distinct partitions. Moreover, if $\alpha\colon\{1,\ldots, n\}\rightarrow I\sqcup J$ and we define $\chi^\alpha$ by $\chi^\alpha(k)=\ell$ if $\alpha(k)\in I$ and $\chi^\alpha(k)=r$ if $\alpha(k)\in J$ then the partitions we obtain from $LR_0(\chi^\alpha,\epsilon)$ are elements of $BNC(\alpha)$. We denote by $BNC(\alpha,\epsilon)$ the partitions obtained from the diagrams in $LR_0(\chi^\alpha,\epsilon)$. It is not hard to see that given the diagram associated to some $\pi \in BNC(\alpha)$, there exists some shading $\epsilon$ such that $\pi\in BNC(\alpha,\epsilon)$. It then follows that
	\begin{align*}
		BNC(\alpha)=\bigcup_{\epsilon\in \{',''\}^n} BNC(\alpha,\epsilon)
	\end{align*}
As with $BNC(\alpha)$, we may denote $BNC(\alpha, \epsilon)$ by $BNC(\chi, \epsilon)$ when $\chi=\chi^\alpha$.

%At this point, it will be useful to state some further definitions related to the diagrams in $BNC(\alpha)$.

\begin{defi}
Suppose that $V$ and $W$ are blocks of some $\pi \in BNC(\chi)$.
Then $V$ and $W$ are said to be \emph{piled} if $\max\left(\min(V), \min(W)\right) \leq \min\left(\max(V), \max(W)\right)$.
In terms of the diagram corresponding to $\pi$, the spines of $V$ and $W$ are not entirely above or below each other; there is some horizontal level at which both are present.

Given blocks $V$ and $W$, a third block $U$ \emph{separates} $V$ from $W$ if it is piled with both, and its spine lies between the spines of $V$ and $W$.
Note that $V$ and $W$ need not be piled with each other to have a separator.
Equivalently, $U$ is piled with both $V$ and $W$, and there are $j, k \in U$ such that $s_\alpha^{-1}(V) \subseteq [s_\alpha^{-1}(j), s_\alpha^{-1}(k)]$ and $s_\alpha^{-1}(W) \cap [s_\alpha^{-1}(j), s_\alpha^{-1}(k)] = \emptyset$, or vice versa.
Given any three piled blocks, one always separates the other two.

Finally, piled blocks $V$ and $W$ are said to be \emph{tangled} if there is no block which separates them.
\end{defi}

\begin{ex}
Consider the following diagrams.
%\[%\begin{align*}
\[
		\begin{tikzpicture}[baseline]
			%\draw[thick,dashed] (-1,-1.25) -- (-1,1.25);
			%\draw[thick,dashed] (1,-1.25) -- (1,1.25);
			
			\draw[black,fill=black] (1, 2) circle (0.05);
			\draw[black,fill=black] (1, 1.5) circle (0.05);
			\draw[black,fill=black] (1, 1) circle (0.05);
			\draw[black,fill=black] (-1, .5) circle (0.05);
			\draw[black,fill=black] (-1, 0) circle (0.05);
			\draw[black,fill=black] (-1, -.5) circle (0.05);
			\draw[black,fill=black] (-1, -1) circle (0.05);
			\draw[black,fill=black] (1, -1.5) circle (0.05);
			\draw[black,fill=black] (1, -2) circle (0.05);
			
			\node[right] at (1, 2) {1};
			\node[right] at (1, 1.5) {2};
			\node[right] at (1, 1) {3};
			\node[left] at (-1, .5) {4};
			\node[left] at (-1, 0) {5};
			\node[left] at (-1, -.5) {6};
			\node[left] at (-1, -1) {7};
			\node[right] at (1, -1.5) {8};
			\node[right] at (1, -2) {9};

			\node[left] at (-.5, 1.75) {$V_1$};
			\draw[thick] (1, 2) -- (-.5, 2) -- (-.5, .5) -- (-1, .5);

			\node[left] at (0, .25) {$V_2$};
			\draw[thick] (1, 1.5) -- (0, 1.5) -- (0, 0) -- (-1, 0);
			
			\node[right] at (.5, -.25) {$V_3$};
			\draw[thick] (1, 1) -- (.5, 1) -- (.5, -1.5) -- (1, -1.5);
			\draw[thick] (-1, -.5) -- (.5, -.5);
			
			\node[left] at (0, -1.25) {$V_4$};
			\draw[thick] (-1, -1) -- (0, -1) -- (0, -2) -- (1, -2);
			
			%\draw[thick,red] (.5,1) -- (.25,1) -- (.25,1.25);
			%\draw[thick,red] (-.5,.5) -- (-.25,.5) -- (-.25,1.25);
			%\draw[thick, ggreen] (.5,0) -- (0,0) -- (0, 1.25);
		\end{tikzpicture}
		\qquad\qquad
		\begin{tikzpicture}[baseline]
			%\draw[thick,dashed] (-1,-1.25) -- (-1,1.25);
			%\draw[thick,dashed] (1,-1.25) -- (1,1.25);
			
			\draw[black,fill=black] (1, 2) circle (0.05);
			\draw[black,fill=black] (1, 1.5) circle (0.05);
			\draw[black,fill=black] (-1, 1) circle (0.05);
			\draw[black,fill=black] (1, .5) circle (0.05);
			\draw[black,fill=black] (-1, 0) circle (0.05);
			\draw[black,fill=black] (-1, -.5) circle (0.05);
			\draw[black,fill=black] (-1, -1) circle (0.05);
			\draw[black,fill=black] (1, -1.5) circle (0.05);
			\draw[black,fill=black] (1, -2) circle (0.05);

			\node[right] at (1, 2) {1};
			\node[right] at (1, 1.5) {2};
			\node[left] at (-1, 1) {3};
			\node[right] at (1, .5) {4};
			\node[left] at (-1, 0) {5};
			\node[left] at (-1, -.5) {6};
			\node[left] at (-1, -1) {7};
			\node[right] at (1, -1.5) {8};
			\node[right] at (1, -2) {9};

			\node[left] at (-.5, 1.75) {$U_1$};
			\draw[thick] (1, 2) -- (-.5, 2) -- (-.5, 1) -- (-1, 1);

			\node[left] at (0, .25) {$U_2$};
			\draw[thick] (1, 1.5) -- (0, 1.5) -- (0, 0) -- (-1, 0);
			
			\node[right] at (.5, -.25) {$U_3$};
			\draw[thick] (1, .5) -- (.5, .5) -- (.5, -1.5) -- (1, -1.5);
			\draw[thick] (-1, -.5) -- (.5, -.5);
			
			\node[left] at (0, -1.25) {$U_4$};
			\draw[thick] (-1, -1) -- (0, -1) -- (0, -2) -- (1, -2);
			
			%\draw[thick,red] (.5,1) -- (.25,1) -- (.25,1.25);
			%\draw[thick,red] (-.5,.5) -- (-.25,.5) -- (-.25,1.25);
			%\draw[thick, ggreen] (.5,0) -- (0,0) -- (0, 1.25);
		\end{tikzpicture}
		\qquad\qquad
		\begin{tikzpicture}[baseline]
			%\draw[thick,dashed] (-1,-1.25) -- (-1,1.25);
			%\draw[thick,dashed] (1,-1.25) -- (1,1.25);
			
			\draw[black,fill=black] (1, 2) circle (0.05);
			\draw[black,fill=black] (1, 1.5) circle (0.05);
			\draw[black,fill=black] (-1, 1) circle (0.05);
			\draw[black,fill=black] (-1, .5) circle (0.05);
			\draw[black,fill=black] (1, 0) circle (0.05);
			\draw[black,fill=black] (-1, -.5) circle (0.05);
			\draw[black,fill=black] (-1, -1) circle (0.05);
			\draw[black,fill=black] (1, -1.5) circle (0.05);
			\draw[black,fill=black] (1, -2) circle (0.05);

			\node[right] at (1, 2) {1};
			\node[right] at (1, 1.5) {2};
			\node[left] at (-1, 1) {3};
			\node[left] at (-1, .5) {4};
			\node[right] at (1, 0) {5};
			\node[left] at (-1, -.5) {6};
			\node[left] at (-1, -1) {7};
			\node[right] at (1, -1.5) {8};
			\node[right] at (1, -2) {9};

			\node[left] at (-.5, 1.75) {$W_1$};
			\draw[thick] (1, 2) -- (-.5, 2) -- (-.5, 1) -- (-1, 1);

			\node[right] at (0, 1.25) {$W_2$};
			\draw[thick] (1, 1.5) -- (0, 1.5) -- (0, 0.5) -- (-1, 0.5);
			
			\node[right] at (.5, -.75) {$W_3$};
			\draw[thick] (1, 0) -- (.5, 0) -- (.5, -1.5) -- (1, -1.5);
			\draw[thick] (-1, -.5) -- (.5, -.5);
			
			\node[left] at (0, -1.25) {$W_4$};
			\draw[thick] (-1, -1) -- (0, -1) -- (0, -2) -- (1, -2);
			
			%\draw[thick,red] (.5,1) -- (.25,1) -- (.25,1.25);
			%\draw[thick,red] (-.5,.5) -- (-.25,.5) -- (-.25,1.25);
			%\draw[thick, ggreen] (.5,0) -- (0,0) -- (0, 1.25);
		\end{tikzpicture}
\]
In the first diagram, $V_2$ separates $V_1$ from $V_3$, and all three are piled with one another.
In the second diagram, $U_2$ still separates $U_1$ and $U_3$, but $U_1$ and $U_3$ are not piled with each other.
In the third diagram, there are no separators.

\end{ex}

\begin{defi}
Suppose $\pi, \sigma \in BNC(\chi)$ are such that $\pi \leq \sigma$.
We say $\pi$ is a \emph{lateral refinement} of $\sigma$ and write $\pi \lrleq \sigma$ if no two piled blocks in $\pi$ are contained in the same block of $\sigma$.

Lateral refinements correspond to making horizontal ``cuts'' along the spines of blocks of $\pi$, between their ribs.
\end{defi}

In the notation of Example \ref{3_node_ex}, $E_1$ is a lateral refinement of $E_3$ made by cutting the block $\{1,2\}$ in between node $1$ and node $2$.

\begin{lem}\label{lateral_refinement}
If $\pi\in BNC(\chi,\epsilon)$ then piled blocks of the same shade in $\pi$ must be separated.
Consequently, if $\sigma\in BNC(\alpha,\epsilon)$ and $\pi\leq \sigma$ then $\pi\lrleq\sigma$.
\end{lem}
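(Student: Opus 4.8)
The plan is to establish the first assertion directly from the recursive construction of the diagrams in $LR_0(\chi,\epsilon)$, and then to derive the second from it via a crossing-avoidance argument in $NC(n)$ and a finiteness step. For the first assertion I would fix a diagram $D\in LR_0(\chi,\epsilon)$ realizing $\pi$ and work with its recursive build-up, in which the nodes are introduced in the order $n,n-1,\dots,1$ and, by the property noted in the construction, at every stage the chords reaching the current top gap alternate in shade. Given piled blocks $V$ and $W$ of $\pi$ of a common shade, I would relabel so that $\min V<\min W$ and first observe that $V$ is not a singleton: if $V=\{v\}$ with $v=\min V<\min W$, then $v$ sits strictly above the whole span of $W$, so $V$ and $W$ cannot be piled. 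Hence $|V|\ge 2$, and since $V$ and $W$ are piled (and distinct blocks occupy distinct heights) one gets $\min V<\min W<\max V$. I would then examine the stage just before node $\min W$ is introduced, letting $O$ denote the set of chords then reaching the top gap; the spine of $V$ lies in $O$. If $|W|\ge2$, the spine of $W$ lies in $O$ too, so as $V$ and $W$ have the same shade they cannot be adjacent in $O$, and a chord strictly between them is the spine of some block $U\ne V,W$; this $U$ is active at the level of node $\min W$, hence piled with both $V$ and $W$, with spine between theirs, so $U$ separates $V$ from $W$. If instead $W=\{w\}$ is a singleton, node $w$ must be left isolated, which the construction allows only when the chord of $O$ nearest the $\chi(w)$-side has shade opposite to $\epsilon_w$; since $V$'s spine has shade $\epsilon_w$ it is not that nearest chord, so some block $U\in O$ has its spine between node $w$ and the spine of $V$, and again $U$ is piled with both and lies between their spines. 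This proves the first assertion.

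For the second assertion I would take $\pi,\sigma\in BNC(\alpha,\epsilon)$ with $\pi\le\sigma$ (the statement is used with $\pi$ shaded compatibly with $\epsilon$) and argue by contradiction: suppose $V$ and $W$ are piled blocks of $\pi$ contained in a single block $B$ of $\sigma$. Since $B$ is a block of $\sigma\in BNC(\alpha,\epsilon)$ it is monochromatic, so $V$ and $W$ share a shade, and the first assertion supplies a block $U\ne V,W$ of $\pi$ separating $V$ from $W$. Translating to $NC(n)$ via $s_\alpha^{-1}$ (the $S_n$-action is order-preserving, so $s_\alpha^{-1}\cdot\pi\le s_\alpha^{-1}\cdot\sigma$), the interval description of ``separates'' yields $j,k\in U$ for which $[s_\alpha^{-1}(j),s_\alpha^{-1}(k)]$ has both endpoints in $s_\alpha^{-1}(U)$, contains $s_\alpha^{-1}(V)$, and is disjoint from $s_\alpha^{-1}(W)$ (or the same with $V$ and $W$ interchanged). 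Were $s_\alpha^{-1}(U)$ contained in a block of $s_\alpha^{-1}\cdot\sigma$ other than $s_\alpha^{-1}(B)$, then that block and $s_\alpha^{-1}(B)$---the latter having a point strictly inside the interval (from $s_\alpha^{-1}(V)$) and one strictly outside it (from $s_\alpha^{-1}(W)$)---would cross, contradicting $s_\alpha^{-1}\cdot\sigma\in NC(n)$. Hence $U\subseteq B$, so $U$ is itself monochromatic of $B$'s shade; moreover $U$ is piled with $V$ and its spine lies strictly between those of $V$ and $W$. The pair $(V,U)$ then satisfies the same hypotheses, so iterating the first assertion produces infinitely many distinct blocks of $\pi$ whose spines lie ever closer to that of $V$---impossible for a finite partition. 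This contradiction gives $\pi\lrleq\sigma$.

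The substantive difficulty is the first assertion, and within it the singleton case: one must read off from the permissive ``leave $p$ isolated'' clause exactly the constraint that the nearest extending chord on $p$'s side carries the opposite shade, and then track the left/right orientation carefully enough to place the separator $U$ between node $w$ and the spine of $V$. For the second assertion the only delicate step is the crossing-avoidance argument keeping $U$ inside $B$; once that is in hand, finiteness of $\pi$ closes the descent at once.
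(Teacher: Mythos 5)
Your proof is correct, but it takes a longer route than the paper's in the second assertion. For the first assertion you invoke the same mechanism as the paper (adjacent chords reaching the top gap alternate in shade, so two same-shade spines that are simultaneously active force a separator between them), but you examine the stage just before node $\min W$ is placed and split into the cases $|W|\geq 2$ and $W$ a singleton. The paper avoids the case split by taking $k := \max(V_2) < \max(V_1)$ (WLOG) and noting that when node $k$ is placed, whether it begins a new spine or is left isolated, the construction forces the nearest existing spine to be of the opposite shade; since $V_1$'s spine is active at that stage and has the same shade, the nearest spine is a different one, lies strictly between, and its block separates. For the second assertion your crossing argument in $NC(n)$ correctly pins $U\subseteq B$, but you then iterate the first assertion to produce infinitely many distinct separators nested toward $V$, contradicting the finiteness of $\pi$. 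The paper's argument is more direct: its proof of the first assertion in fact produces a separator $U$ of the \emph{opposite} shade, so $U\subseteq B$ immediately contradicts $B$ being monochromatic and no descent is needed. Your route has the minor virtue of relying only on the first assertion as stated rather than on the stronger opposite-shade fact established inside its proof, but once $U\subseteq B$ is in hand you could close the argument at once by appealing to that fact.
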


\begin{proof}
Suppose $V_1$ and $V_2$ are piled blocks in $\pi \in BNC(\chi, \epsilon)$ which have the same shade.
Without loss of generality, $k := \max(V_2) < \max(V_1)$.
In the construction of the diagram generating $\pi$, when node $k$ is placed the nearest spine must be of a different shade as $k$ begins a new spine.
In particular, this spine sits between the spines of $V_1$ and $V_2$, and so its block is a separator.
%The separator must extend vertically through the region in which both $V$ and $W$ have spines, since it cannot reach either vertical segment to terminate without crossing the spine of either $V$ or $W$.

If two blocks of the same in $\pi$ are piled, the above argument demonstrates that they are separated by a block of a different shade and so can't be joined in $\sigma$.
\end{proof}

%%%%%%%%%%%%%%%%%%%%%%%%%%%%%%%%%%%%%%%%%%%%%%%%%%%%%%%%%%%%%%%%%%%%
%	The incident algebra of bi-non-crossing partitions		   %
%%%%%%%%%%%%%%%%%%%%%%%%%%%%%%%%%%%%%%%%%%%%%%%%%%%%%%%%%%%%%%%%%%%%
\section{The Incident Algebra on Bi-Non-Crossing Partitions}
\label{sec:incalg}

\setcounter{thm}{0}

\begin{defi}
The lattice of bi-non-crossing partitions is
\[
BNC := \bigcup_{n\geq 1} \bigcup_{\chi : \{1, \ldots, n\} \to \{\ell, r\}} BNC(\chi)
\]
where the lattice structure on $BNC(\chi)$ is as above.
\end{defi}

Given any lattice, there is an algebra of functions associated to the lattice.
\begin{defi}
The incident algebra on $BNC$, denoted $IA(BNC)$, is all functions of the form
\[
f : \bigcup_{n\geq 1} \left(  \bigcup_{\chi : \{1, \ldots, n\} \to \{\ell, r\}} BNC(\chi)\times BNC(\chi)\right) \to \mathbb{C}
\]
such that $f(\pi,\sigma) = 0$ if $\pi \nleq \sigma$ equipped with pointwise addition and a convolution product defined by
\[
(f * g)(\pi, \sigma) = \sum_{\pi \leq \rho \leq \sigma} f(\pi, \rho)g(\rho,\sigma)
\]
for all $\pi, \sigma \in BNC(\chi)$ and $f, g \in IA(BNC)$.
\end{defi}
It is elementary to show that $IA(BNC)$ is an algebra and thus $(f*g)*h = f*(g*h)$.

%%%%%%%%%%%%%%%%%%%%%%%%%%%%%%%%%%%%%%%%%%%%%%%%%%%%%%%%%%%%%%%%%%%%
\subsection{Multiplicative functions on the incident algebra}

In order to construct the notion of multiplicative functions on $BNC$, it is necessary to identify the lattice structure of an interval as a product of full intervals.
\begin{prop}
\label{breakingintervalsintofullintervals}
Let $\pi, \sigma \in BNC(\chi)$ be such that $\pi \leq \sigma$.
The interval
\[
[\pi, \sigma] = \{\rho \in BNC(\chi) \, \mid \, \pi \leq \rho \leq \sigma\}
\]
can be associated to a product of full lattices
\[
\prod^k_{j=1} BNC(\beta_k)
\]
for some $\beta_k : \{1,\ldots, m_k\} \to \{\ell, r\}$ so that the lattice structure is preserved.
\end{prop}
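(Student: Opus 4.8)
The plan is to mimic the corresponding result in the non-crossing case (Speicher, \cite{speicher1994}), adapting the argument to the bi-non-crossing setting via the canonical rotation relating $BNC(\chi)$ to $NC(n)$. First I would reduce to the case $\pi = 0_\chi$: for a general $\pi$, the blocks of $\pi$ can be treated as ``super-nodes'', and one checks that the sublattice $\{\rho : \pi \le \rho \le \sigma\}$ of $BNC(\chi)$ is isomorphic (as a lattice) to an interval of the form $[0_{\chi'}, \sigma']$ in some $BNC(\chi')$, where $\chi'$ records for each block of $\pi$ whether that block sits on the left or the right at the relevant horizontal level; here the diagrammatic picture makes clear that the sequence of block positions — read top to bottom along the spines — is the relevant data, and Lemma \ref{lateral_refinement} together with the fact that $\pi \le \sigma$ lets one collapse each block of $\pi$ consistently. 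So it suffices to decompose $[0_\chi, \sigma]$.

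Next I would use the rotation described in Section \ref{sec:bncp}: applying $s_\chi^{-1}$ carries $\sigma \in BNC(\chi)$ to a genuine non-crossing partition $\hat\sigma := s_\chi^{-1}\cdot \sigma \in NC(n)$, and this map is an order isomorphism from $BNC(\chi)$ onto $NC(n)$, hence restricts to an order isomorphism $[0_\chi,\sigma] \cong [0_n, \hat\sigma]$. By the classical theory, $[0_n,\hat\sigma] \cong \prod_{V \in \hat\sigma} NC(|V|)$, the product being over the blocks of $\hat\sigma$. The point is then to transport this product decomposition back through the rotation: for each block $W$ of $\sigma$ (equivalently each block $V = s_\chi^{-1}(W)$ of $\hat\sigma$), the nodes of $W$ inherit a left/right pattern from $\chi$, giving a map $\beta_W : \{1,\ldots,|W|\} \to \{\ell,r\}$ by listing the values of $\chi$ on $W$ in increasing order of the $s_\chi^{-1}$-relabelling; one then checks $NC(|V|) \cong BNC(\beta_W)$ via the inverse rotation. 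Concatenating over blocks and re-indexing gives the asserted product $\prod_{j=1}^k BNC(\beta_k)$ with $m_k = |W_k|$.

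The step I expect to be the main obstacle is verifying that the product decomposition is genuinely lattice-theoretic and compatible with the bi-non-crossing structure on each factor — i.e.\ that when one decomposes $\rho \in [0_\chi,\sigma]$ into its ``restrictions'' to the blocks of $\sigma$, each restriction is itself bi-non-crossing with respect to the induced $\beta_W$, and conversely any tuple of such partitions reassembles to an element of $[0_\chi,\sigma]$. In the classical case this is immediate from the nesting structure of non-crossing partitions; here one must confirm that the rotation respects restriction to a block, which amounts to the observation that $s_\chi$ restricted to (the preimage of) a block of $\sigma$ agrees, up to order-preserving reindexing, with the $s_{\beta_W}$ built from that block's left/right pattern. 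Once that compatibility is checked, order-preservation of the bijection is formal, since it is a composition of order isomorphisms (the rotation, the classical factorization, and the inverse rotations on each factor).
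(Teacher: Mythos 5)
Your first reduction step is incorrect. Collapsing the blocks of $\pi$ to super-nodes does not in general produce an interval of the form $[0_{\chi'},\sigma']$, because nested blocks of $\pi$ impose constraints that are lost after collapsing. Concretely (work in $NC(n)$, which is equivalent under the rotation $s_\chi^{-1}$): take $n=4$, $\pi = \bigl\{\{1,3\},\{2\},\{4\}\bigr\}$, $\sigma = 1_4$. Then $[\pi,\sigma]$ contains only the four partitions $\pi$, $\bigl\{\{1,2,3\},\{4\}\bigr\}$, $\bigl\{\{1,3,4\},\{2\}\bigr\}$, and $1_4$ --- the join $\bigl\{\{1,3\},\{2,4\}\bigr\}$ is excluded because it is crossing. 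Collapsing the three blocks of $\pi$ to three super-nodes would give a lattice with the same cardinality as some $[0_3,\sigma']$ with $\sigma' \in NC(3)$, but the image of $[\pi,\sigma]$ under the collapse map contains both $1_3$ and two of the three two-block partitions of $\{1,2,3\}$ while missing the third, so it is not a lower interval of $NC(3)$ at all. This is precisely why Speicher's recursive argument first peels off nested structure (removing blocks lying strictly between consecutive elements of another block) and only \emph{then} collapses blocks to points, once every remaining block is an interval; the collapse is the last step, not the first.

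Ironically, the reduction you attempted is also unnecessary: Speicher's Proposition~1 already handles arbitrary intervals $[\hat\pi,\hat\sigma]$ in $NC(n)$, not just $[0_n,\hat\sigma]$. So if you delete your first paragraph and instead apply the rotation $s_\chi^{-1}$ directly to the full interval, transferring to $[s_\chi^{-1}\cdot\pi,\, s_\chi^{-1}\cdot\sigma] \subseteq NC(n)$, invoke Speicher's decomposition into $\prod_k NC(m_k)$, and then note (as you do) that each $NC(m_k)$ is lattice-isomorphic to $BNC(\beta_k)$ for a suitable $\beta_k$, the argument goes through and is in fact shorter than the paper's. The paper takes a longer route for a reason, though: it redoes Speicher's recursion inside $BNC$ so as to choose the $\beta_k$ in a way that remembers the left/right placement of the surviving nodes at each stage, rather than settling for an arbitrary $\beta_k$ with the right domain size. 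That extra bookkeeping is what gives the multiplicative-function machinery in the rest of Section~\ref{sec:incalg} a canonical, diagram-compatible decomposition to work with.
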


\begin{proof}
The idea behind the decomposition is to take $\pi$ and $\sigma$, view $\pi$ and $\sigma$ as elements of $NC(n)$ by applying $s_\chi^{-1}$, and using the decomposition of intervals in $NC(n)$ given in Proposition 1 of \cite{speicher1994} while maintaining the notion of left and right nodes.

First write $\sigma = \{W_1, \ldots, W_k\}$.
Let $\pi_j $ and $\sigma_j$ be the restrictions of $\pi$ and $\sigma$ to $W_j$.
Then we decompose $[\pi, \sigma]$ into
\[
\prod^k_{j=1} [\pi_j, \sigma_j].
\]
Note each $\sigma_j$ is a full bi-non-crossing partition corresponding to some $\gamma_j : \{1, \ldots, n_j\} \to \{\ell, r\}$ so one may reduce to intervals of the form $[\pi, 1_{\chi}]$.

For a fixed $\chi : \{1, \ldots, n\} \to \{\ell, r\}$, a modification to the recursive argument of Proposition 1 of \cite{speicher1994} under the identification of $BNC(\chi)$ with $NC(n)$ will be described.
First, viewing $\pi \in NC(n)$, examine whether $\pi$ has a block $V=\{k_1 < k_2 < \cdots < k_m\}$ containing non-consecutive elements; that is, there exists an index $t$ such that $k_t +1 \neq k_{t+1}$.
If so, the recursive argument of Proposition 1 of \cite{speicher1994} would decompose $[\pi, 1_\chi]$ into the product of two intervals (removing any trivial intervals that occur): one corresponding to taking $[\pi, 1_\chi]$ and removing all nodes strictly between $k_{t}$ and $k_{t+1}$; and the other corresponding to taking only the nodes strictly between $k_{t}$ and $k_{t+1}$ and adding an isolated node on the left.
The only change made to accommodate $BNC$ is that the isolated node for the second interval should be added to the top left of the bi-non-crossing diagram if the lower of the two nodes of the original diagram corresponding to $k_{t}$ and $k_{t+1}$ is on the left and otherwise on the top right.
For example:
\begin{align*}
	\begin{tikzpicture}[baseline]
	\node[left] at (-.5, 2.5) {1};
	\draw[fill=black] (-.5,2.5) circle (0.05);
	\node[right] at (.5, 2) {2};
	\draw[fill=black] (.5,2) circle (0.05);
	\node[left] at (-.5, 1.5) {3};
	\draw[fill=black] (-.5,1.5) circle (0.05);
	\node[left] at (-.5, 1) {4};
	\draw[fill=black] (-.5,1) circle (0.05);
	\node[right] at (.5, .5) {5};
	\draw[fill=black] (.5,.5) circle (0.05);
	\node[right] at (.5,0) {6};
	\draw[fill=black] (.5,0) circle (0.05);
	\draw[thick] (-.45,2.5) -- (0,2.5) -- (0, 0.5) -- (.45,0.5);
	\draw[thick] (-.45,1) -- (-.25,1) -- (-.25, 0) -- (.45,0);
	\draw[thick] (-.45,1.5) -- (0,1.5);
	\draw[thick] (.45,2) -- (0,2);
	\draw[thick] (.75, 1.25) -- (1.75, 1.25) -- (1.7,1.2);
	\draw[thick] (1.75, 1.25) -- (1.7,1.3);
	\end{tikzpicture}
	\begin{tikzpicture}[baseline]
	\node[left] at (-.5, 2.5) {1};
	\draw[fill=black] (-.5,2.5) circle (0.05);
	\node[right] at (.5, 2) {2};
	\draw[fill=black] (.5,2) circle (0.05);
	\node[left] at (-.5, 1.5) {3};
	\draw[fill=black] (-.5,1.5) circle (0.05);
	\node[right] at (.5, .5) {5};
	\draw[fill=black] (.5,.5) circle (0.05);
	\draw[thick] (-.45,2.5) -- (0,2.5) -- (0, 0.5) -- (.45,0.5);
	\draw[thick] (-.45,1.5) -- (0,1.5);
	\draw[thick] (.45,2) -- (0,2);
	\node[above] at (1.25, 1) {$\times$};
	\end{tikzpicture}
	\begin{tikzpicture}[baseline]
	\draw[fill=black] (.5,1.5) circle (0.05);
	\node[left] at (-.5, 1) {4};
	\draw[fill=black] (-.5,1) circle (0.05);
	\node[right] at (.5,0) {6};
	\draw[fill=black] (.5,0) circle (0.05);
	\draw[thick] (-.45,1) -- (-.25,1) -- (-.25, 0) -- (.45,0);
	\end{tikzpicture}
\end{align*}
Note that the first term in the product will be ignored as it is a full partition.

 This recursive process eventually terminates leaving only partitions $\pi$ such that the blocks of $\sigma_\chi^{-1}\cdot\pi$ are intervals.
%This recursive process eventually terminates leaving a partition $\pi$ which has the property that any block connecting elements on the left and the right appears at the bottom of the diagram it must be the lowest block on both sides and all other blocks connect elements only on the left or only on the right where any nodes appearing between the top and bottom nodes of such a block on the same side are also in the block.
For such a bi-non-crossing partition, we associate the zero bi-non-crossing partition corresponding to keeping only the lowest node of each block.
For example:
\begin{align*}
	\begin{tikzpicture}[baseline]
	\node[left] at (-.5, 3) {1};
	\draw[fill=black] (-.5,3) circle (0.05);
	\node[left] at (-.5, 2.5) {2};
	\draw[fill=black] (-.5,2.5) circle (0.05);
	\node[right] at (.5, 2) {3};
	\draw[fill=black] (.5,2) circle (0.05);
	\node[left] at (-.5, 1.5) {4};
	\draw[fill=black] (-.5,1.5) circle (0.05);
	\node[left] at (-.5, 1) {5};
	\draw[fill=black] (-.5,1) circle (0.05);
	\node[right] at (.5, .5) {6};
	\draw[fill=black] (.5,.5) circle (0.05);
	\node[right] at (.5,0) {7};
	\draw[fill=black] (.5,0) circle (0.05);
	\draw[thick] (-.45,3) -- (0,3) -- (0, 1.5) -- (-.45,1.5);
	\draw[thick] (-.45,2.5) -- (0,2.5);
	\draw[thick] (-.45,1) -- (0,1) -- (0, 0) -- (.45,0);
	\draw[thick] (.45,2) -- (.25,2) -- (.25,.5) -- (.45,.5);
	\draw[thick] (.75, 1.5) -- (1.75, 1.5) -- (1.7,1.45);
	\draw[thick] (1.75, 1.5) -- (1.7,1.55);
	\end{tikzpicture}
	\begin{tikzpicture}[baseline]
	\node[left] at (-.5, 1.5) {4};
	\draw[fill=black] (-.5,1.5) circle (0.05);
	\node[right] at (.5, .5) {6};
	\draw[fill=black] (.5,.5) circle (0.05);
	\node[right] at (.5,0) {7};
	\draw[fill=black] (.5,0) circle (0.05);
	\end{tikzpicture}
\end{align*}
Thus we have reduced $[\pi,\sigma]$ to products of full lattices in $BNC$.
\end{proof}
Note that as in Proposition 1 of \cite{speicher1994} we make no claim that this association is unique.
However, this ambiguity does not affect the following computations.
\begin{defi}
A function $f \in IA(BNC)$ is said to be \emph{multiplicative} if whenever $\pi, \sigma \in BNC(\chi)$ are such that
\[
[\pi, \sigma] \leftrightarrow \prod^k_{j=1} BNC(\beta_k)
\]
for some $\beta_k : \{1,\ldots, m_k\} \to \{\ell, r\}$, then
\[
f(\pi, \sigma) = \prod^k_{j=1} f(0_{\beta_k}, 1_{\beta_k}).
\]

For a multiplicative function $f \in IA(BNC)$, we will call the collection $\{f([0_{\chi}, 1_\chi]) \, \mid \, n\geq 1, \chi: \{1,\ldots, n\} \to \{\ell, r\}\} \subseteq \mathbb{C}$ the \emph{multiplicative net} associated to $f$.
Note that for any net $\Lambda = \{a_\chi \, \mid \, n\geq 1, \chi: \{1,\ldots, n\} \to \{\ell, r\}\} \subseteq \mathbb{C}$ there is precisely one multiplicative function $f$ with multiplicative sequence $\Lambda$.
\end{defi}
\begin{lem}
If $f, g\in IA(BNC)$ are multiplicative, then $f * g$ is multiplicative.
\end{lem}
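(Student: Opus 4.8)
The plan is to imitate the classical argument that the convolution of multiplicative functions on the incidence algebra of $NC(n)$ is multiplicative (Speicher, \cite{speicher1994}). The key point is that the interval decomposition from Proposition \ref{breakingintervalsintofullintervals} is compatible with refinement: if $\pi \leq \rho \leq \sigma$ in $BNC(\chi)$ and $[\pi,\sigma]$ is identified with $\prod_{j=1}^k BNC(\beta_j)$, then $\rho$ corresponds to a tuple $(\rho_1,\dots,\rho_k)$ with $\rho_j \in BNC(\beta_j)$, and moreover for each $j$ the subinterval $[\pi_j,\rho_j]$ (where $\pi_j = 0_{\beta_j}$ under the identification) and $[\rho_j, 1_{\beta_j}]$ are themselves (iterated) products of full lattices, with the property that $[\pi,\rho] \leftrightarrow \prod_j [\pi_j,\rho_j]$ and $[\rho,\sigma] \leftrightarrow \prod_j [\rho_j,1_{\beta_j}]$ as lattices.

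First I would fix $\pi \leq \sigma$ in $BNC(\chi)$ with $[\pi,\sigma] \leftrightarrow \prod_{j=1}^k BNC(\beta_j)$. Since everything factors through this product, it suffices to treat the case $k=1$, i.e. to show
\[
(f*g)(0_\beta, 1_\beta) = \sum_{0_\beta \leq \rho \leq 1_\beta} f(0_\beta,\rho)\, g(\rho, 1_\beta) = \sum_{0_\beta \leq \rho \leq 1_\beta} \left(\prod_{i} f(0_{\gamma_i}, 1_{\gamma_i})\right)\left(\prod_{l} g(0_{\delta_l}, 1_{\delta_l})\right),
\]
where $[0_\beta,\rho] \leftrightarrow \prod_i BNC(\gamma_i)$ and $[\rho,1_\beta] \leftrightarrow \prod_l BNC(\delta_l)$ come from applying Proposition \ref{breakingintervalsintofullintervals}, and then to check that this sum equals $\prod_j (f*g)(0_{\beta_j},1_{\beta_j})$ — but with $k=1$ that product is a single factor $(f*g)(0_\beta,1_\beta)$, so the content is really the reduction to $k=1$ plus the observation that the formula for $(f*g)$ in a single full interval is canonical and thus multiplicative functions $f*g$ are determined by their values on full intervals. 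Concretely: having reduced to $k=1$, I would note that the displayed sum over $\rho \in [0_\beta,1_\beta]$ depends only on $\beta$, hence defines the value of a multiplicative function, and this is exactly what must be checked at each factor of a general decomposition.

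The cleaner way to organize this, which I would actually carry out: let $h = f*g$. Given any $\pi \leq \sigma$ with $[\pi,\sigma] \leftrightarrow \prod_{j=1}^k BNC(\beta_j)$, the interval $[\pi,\sigma]$ as a poset is the product poset $\prod_j BNC(\beta_j)$, so a chain $\pi \leq \rho \leq \sigma$ corresponds to $\rho \leftrightarrow (\rho_1,\dots,\rho_k)$ and the subintervals split as $[\pi,\rho] \leftrightarrow \prod_j [0_{\beta_j},\rho_j]$ and $[\rho,\sigma] \leftrightarrow \prod_j [\rho_j,1_{\beta_j}]$. Using multiplicativity of $f$ and $g$ on each of these (which are again products of full lattices by Proposition \ref{breakingintervalsintofullintervals}) gives
\[
f(\pi,\rho) = \prod_{j=1}^k f(0_{\beta_j},\rho_j), \qquad g(\rho,\sigma) = \prod_{j=1}^k g(\rho_j, 1_{\beta_j}).
\]
Substituting into $h(\pi,\sigma) = \sum_{\pi\leq\rho\leq\sigma} f(\pi,\rho)g(\rho,\sigma)$ and using that the sum over $\rho$ is a sum over tuples $(\rho_1,\dots,\rho_k)$, the sum factors:
\[
h(\pi,\sigma) = \prod_{j=1}^k \left( \sum_{0_{\beta_j} \leq \rho_j \leq 1_{\beta_j}} f(0_{\beta_j},\rho_j) g(\rho_j,1_{\beta_j}) \right) = \prod_{j=1}^k h(0_{\beta_j},1_{\beta_j}).
\]
This is precisely the statement that $h$ is multiplicative.

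\textbf{Main obstacle.} The one genuine thing to verify is that Proposition \ref{breakingintervalsintofullintervals} is \emph{natural} with respect to subintervals: that the identification $[\pi,\sigma] \leftrightarrow \prod_j BNC(\beta_j)$ can be chosen so that for every $\rho$ in between, $[\pi,\rho]$ and $[\rho,\sigma]$ decompose into products of full lattices \emph{compatibly} with the product structure on $[\pi,\sigma]$ (i.e. the decomposition of $[\pi,\sigma]$ restricts to decompositions of the two halves). In the $NC$ case this is standard because the interval $[\rho_j, 1_{\beta_j}] \cong NC(\rho_j)$ depends only on $\rho_j$, and the factorization $[\pi_j,\rho_j]\times[\rho_j,1_{\beta_j}] \subseteq$ the $j$-th factor is internal to that factor; the proof of Proposition \ref{breakingintervalsintofullintervals} (via $s_\chi^{-1}$ and Speicher's recursion, keeping track of left/right) transports this verbatim. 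So the "hard part" is really just invoking the non-uniqueness remark after Proposition \ref{breakingintervalsintofullintervals} carefully — the ambiguity in the decomposition does not interfere because $h(0_\beta,1_\beta)$ only depends on $\beta$ — and confirming that the product-poset structure is preserved, not merely the cardinalities. Once that is in hand, the computation above is routine.
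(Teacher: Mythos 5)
Your proof is correct and follows essentially the same route the paper intends: the paper supplies no argument of its own, instead citing Proposition 2 of Speicher's 1994 paper, and what you have written is precisely the $BNC$ transport of Speicher's argument. You correctly isolate the one genuine technical point, namely that the identification $[\pi,\sigma]\leftrightarrow\prod_j BNC(\beta_j)$ from Proposition \ref{breakingintervalsintofullintervals} must be an isomorphism of posets (so that it restricts to identifications $[\pi,\rho]\leftrightarrow\prod_j[0_{\beta_j},\rho_j]$ and $[\rho,\sigma]\leftrightarrow\prod_j[\rho_j,1_{\beta_j}]$), and that the non-uniqueness of the decomposition is harmless because a multiplicative function is by definition insensitive to the choice; once those are granted, the factorization of the convolution sum over tuples $(\rho_1,\dots,\rho_k)$ is routine, as you say.
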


See Proposition 2 of \cite{speicher1994} for a proof of the above.

\begin{rem}
\label{mobiusfunctionremarks}
There are three special multiplicative functions to consider; namely
\[
\delta_{BNC}(\pi, \sigma)  = \left\{
\begin{array}{ll}
1 & \mbox{if } \pi = \sigma \\
0 & \mbox{otherwise}
\end{array} \right.
\]
which is called the delta function on $BNC$ and is the identity element in $IA(BNC)$,
\[
\zeta_{BNC}(\pi, \sigma)  = \left\{
\begin{array}{ll}
1 & \mbox{if } \pi \leq \sigma \\
0 & \mbox{otherwise}
\end{array} \right.
\]
which is called the zeta function on $BNC$, and $\mu_{BNC}$ which is called the M\"{o}bius function on $BNC$ which is defined such that
\[
\mu_{BNC} * \zeta_{BNC} = \zeta_{BNC} * \mu_{BNC} = \delta_{BNC}
\]
(as it is clear that $\zeta_{BNC}$ a left and right (and thereby a two-sided) inverse can be recursively defined).
It is clear that $\delta_{BNC}$ is multiplicative with $\delta_{BNC}(0_\chi, 1_\chi)$ being one if $n = 1$ and zero otherwise, and $\zeta_{BNC}$ is multiplicative with  $\zeta_{BNC}(0_\chi, 1_\chi) = 1$ for all $\chi$.
In addition, one can verify that $\mu_{BNC}$ is multiplicative and for any $\pi, \sigma \in BNC(\chi)$
\[
\mu_{BNC}(\pi, \sigma) = \mu(s^{-1}_\chi \cdot \pi, s^{-1}_\chi \cdot \sigma),
\]
where $\mu$ is the M\"{o}bius function in \cite{speicher1994}.
In addition, if $\pi, \sigma \in BNC(\chi)$ and we view $\pi$ and $\sigma$ as elements of $NC(n)$ as in the first paragraph of Proposition \ref{breakingintervalsintofullintervals}, one obtains
by construction.
\end{rem}

%\begin{rem}
%\label{moebius_power}
%To consolidate the above with Subsection \ref{mastnak_nica}, for a two-faced family $z = ((z_i)_{i \in I}, (z_j)_{j \in J})$ in a non-commutative probability space $(\mathcal{A}, \varphi)$ and $\pi \in BNC(\alpha)$ where $\alpha : \{1, \ldots, n\} \to I \sqcup J$, we define
%\[
%\varphi_\pi(z) = \prod_{V \in \pi} \varphi_{\alpha V}(z)
%\]
%and
%\[
%\kappa_\pi(z) = \sum_{\substack{\sigma \in BNC(\alpha)\\ \sigma \leq\pi}} \mu_{BNC}(\sigma, \pi) \varphi_\sigma(z).
%\]
%We denote $\kappa_{1_\alpha}(z) = \kappa_\alpha(z)$ which, in particular, are the combinatorial bi-free cumulants of Definition 5.2 of \cite{1312.0269}.
%Then, as in \cite{speicher1994}, one can show for $\pi \in BNC(\alpha)$
%\[
%\kappa_\pi(z) = \prod_{V \in \pi} \kappa_{\alpha V}(z)
%\]
%and
%\[
%\varphi_\alpha(z) = \sum_{\pi \in BNC(\alpha)} \kappa_\pi(z).
%\]
%
%For a two-faced family $z = (\{z_\ell\}, \{z_r\})$ we define the above quantities for $\chi : \{1, \ldots, n\} \to \{\ell, r\}$ replacing $\alpha$.
%In this case we let $m_z,\kappa_z \in IA(BNC)$ be the multiplicative functions with multiplicative nets $(\varphi_{\chi}(z))_\chi$ and $(\kappa_{\chi}(z))_\chi$, respectively.
%We call $m_z$ the \emph{moment function} and $\kappa_z$ the \emph{bi-free cumulant function}.
%Thus the formulae $m_z*\mu_{BNC} = \kappa_z$ and $\kappa_z * \zeta_{BNC} = m_z$ are obtained.
%\end{rem}

\begin{rem}
\label{moebius_power}
To consolidate the above with Subsection \ref{mastnak_nica}, for $T_1, \ldots, T_n$ in a non-commutative probability space $(\mathcal{A}, \varphi)$ and $\pi \in BNC(\chi)$ where $\chi : \{1, \ldots, n\} \to \{\ell, r\}$ and $V_t = \{k_{t, 1} < \cdots < k_{t, m_t}\}$ for $t \in \{1, \ldots, k\}$ being the blocks of $\pi$, we define
\[
\varphi_\pi(T_1, \ldots, T_n) := \prod^k_{t=1} \varphi(T_{k_{t,1}} \cdots T_{k_{t, m_t}})
\]
and
\[
\kappa_\pi(T_1, \ldots, T_n) := \sum_{\sigma \in BNC(\chi), \sigma \leq\pi} \varphi_\sigma(T_1, \ldots, T_n) \mu_{BNC}(\sigma, \pi).
\]
Then, as in \cite{speicher1994}, one can show that
\[
\kappa_\pi(T_1, \ldots, T_n) = \prod^k_{t=1} \kappa_{\pi|_{V_t}}(T_{k_{t,1}} \cdots T_{k_{t, m_t}})
\]
where $\kappa_{\pi|_{V_t}}$ should be thought of as the (single block) partition induced by the block $V_t$ of $\pi$, and
\[
\varphi(T_1 \ldots T_n) = \sum_{\pi \in BNC(\chi)} \kappa_\pi(T_1, \ldots, T_n).
\]
In particular, $\kappa_{1_\chi} = \kappa_\chi$ are the bi-free cumulant functions of Definition 5.2 of \cite{1312.0269}.

For a two-faced family $z = ((z_i)_{i\in I}, (z_j)_{j\in J})$, $\alpha : \{1, \ldots, n\} \to I \sqcup J$, and $\pi \in BNC(\alpha)$ we denote
\[\varphi_\pi(z) := \varphi_\pi(z_{\alpha(1)},\ldots,z_{\alpha(n)})
\qquad\text{ and }\qquad
\kappa_\pi(z) := \kappa_\pi(z_{\alpha(1)}, \ldots, z_{\alpha(n)}).\]
In particular, $\varphi_{1_\alpha}(z) = \varphi_\alpha(z)$ and $\kappa_{1_\alpha}(z) = \kappa_\alpha(z)$.
When the faces consist of a single element each, say $z_\ell$ and $z_r$, we define the above quantities for $\chi : \{1, \ldots, n\} \to \{\ell, r\}$ replacing $\alpha$.
In this case we let $m_z,\kappa_z \in IA(BNC)$ be the multiplicative functions with multiplicative nets $(\varphi_{\chi}(z))_\chi$ and $(\kappa_{\chi}(z))_\chi$, respectively.
We call $m_z$ the \emph{moment function} and $\kappa_z$ the \emph{bi-free cumulant function}.
Thus the formulae $m_z*\mu_{BNC} = \kappa_z$ and $\kappa_z * \zeta_{BNC} = m_z$ are obtained.
\end{rem}

%%%%%%%%%%%%%%%%%%%%%%%%%%%%%%%%%%%%%%%%%%%%%%%%%%%%%%%%%%%%%%%%%%%%
%	Unifying Bi-Free Independence						   %
%%%%%%%%%%%%%%%%%%%%%%%%%%%%%%%%%%%%%%%%%%%%%%%%%%%%%%%%%%%%%%%%%%%%

\section{Unifying Bi-Free Independence}
\label{sec:unify}

%%%%%%%%%%%%%%%%%%%%%%%%%%%%%%%%%%%%%%%%%%%%%%%%%%%%%%%%%%%%%%%%%%%%
\subsection{Computing bi-free moments}

We will demonstrate how the partitions of $BNC(\chi,\epsilon)$ may be used to compute joint moments of a bi-free pair of two-faced families.

Fix $\chi\colon\{1,\ldots,n\}\rightarrow \{\ell,r\}$ and a shading $\epsilon\in\{',''\}^n$, and let $T_k\in \mc{L}(\mc{X}^{\epsilon_k})$. Given $D\in LR(\chi,\epsilon)$, we will assign a vector weight $\psi(D;T_1,\ldots, T_n)\in \mc{X}$ to $D$. Define $\mu\in\{\lambda,\rho\}^n$ by $\mu_j=\lambda$ if $\chi(j)=\ell$ and $\mu_j=\rho$ if $\chi(j)=r$. Let $V=\{k_1<\cdots<k_r\}$ be a block in $D$ and let $\epsilon(V):=\epsilon_{k_1}=\cdots=\epsilon_{k_r}$. If the spine of $V$ is not connected to the top gap then $V$ contributes a scalar factor of
	\begin{align*}
		\psi(V;T_1,\ldots, T_n):=\psi^{\epsilon(V)}\left( T_{k_1}(1-p^{\epsilon(V)}) T_{k_2} \cdots (1-p^{\epsilon(V)}) T_{k_r} \xi^{\epsilon(V)}\right)
	\end{align*}
to $\psi(D; T_1,\ldots, T_n)$. If the spine does reach the top gap then it contributes a vector factor of
	\begin{align*}
		\psi(V;T_1,\ldots, T_n):=(1-p^{\epsilon(V)})T_{k_1}(1-p^{\epsilon(V)}) T_{k_2} \cdots (1-p^{\epsilon(V)}) T_{k_r} \xi^{\epsilon(V)}.
	\end{align*}
Then $\psi(D;T_1,\ldots, T_n)$ is the product of the scalar factors and the tensor product of the vector factors where the order in the tensor product is determined by the left to right order of the spines reaching the top gap. If all contributions are scalar factors then we multiply this with the state-vector $\xi$, thinking of it as the ``empty tensor word.''

Recalling Example \ref{3_node_ex}, we see that
	\begin{align*}
		\psi(E_3;T_1,T_2,T_3) &= \psi'(T_1(1-p')T_2\xi')\psi''(T_3\xi'')\xi,\qquad \text{ while }\\
		\psi(E_8;T_1,T_2,T_3) &= (1-p')T_2\xi'\otimes (1-p'')T_3\xi''\otimes (1-p')T_1\xi'
	\end{align*}

\begin{prop}
Fix $\chi\colon\{1,\ldots, n\}\rightarrow\{\ell,r\}$ and a shading $\epsilon\in\{',''\}^n$. Let $\mu\in\{\lambda,\rho\}^n$ be as above. If $T_j\in \mc{L}(\mc{X}^{\epsilon_j})$ for $j=1,\ldots, n$, then following formula holds:
	\begin{align}\label{range_computation}
		\mu_1(T_1)\cdots \mu_n(T_n)\xi= \sum_{D\in LR(\chi,\epsilon)} \psi(D;T_1,\ldots, T_n).
	\end{align}
Moreover,
	\begin{align}\label{moment_formula}
		\varphi(\mu_1(T_1)\cdots \mu_n(T_n))=\sum_{\pi \in BNC(\chi)}\left[\sum_{\substack{\sigma\in BNC(\chi,\epsilon)\\ \sigma\lrgeq \pi}}(-1)^{|\pi|-|\sigma|} \right]  \varphi_\pi(T_1,\ldots,T_n)
	\end{align}
	
%%
\begin{comment}
where
	\begin{align*}
		c_{\pi,\alpha,\epsilon} = \sum_{\substack{\sigma\in BNC(\alpha,\epsilon)\\ \sigma\geq \pi}} (-1)^{|\pi| - |\sigma|}
	\end{align*}
\end{comment}
%%
\end{prop}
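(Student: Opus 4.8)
The plan is to prove the vector identity \eqref{range_computation} first, by induction on $n$, and then to obtain \eqref{moment_formula} from it by applying the state-vector functional. I will work throughout with Voiculescu's tensor presentation of the free product (\S1.9 of \cite{voiculescu2013freei}): for each $\epsilon$ there are identifications $\mc X \cong \mc X^\epsilon \otimes \mc X(\ell,\epsilon) \cong \mc X(r,\epsilon)\otimes \mc X^\epsilon$ under which $\lambda^\epsilon(T)=T\otimes 1$ and $\rho^\epsilon(T)=1\otimes T$, where $\mc X(\ell,\epsilon)$ (resp.\ $\mc X(r,\epsilon)$) is spanned by $\xi$ together with the reduced words whose first (resp.\ last) letter does not lie in $(1-p^\epsilon)\mc X^\epsilon$. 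The two computational facts I need are that $\mu_j(T_j)\xi = T_j\xi^{\epsilon_j} = \varphi^{\epsilon_j}(T_j)\xi + (1-p^{\epsilon_j})T_j\xi^{\epsilon_j}$, and that on a single letter $\eta\in(1-p^\epsilon)\mc X^\epsilon$ both $\lambda^\epsilon(T)$ and $\rho^\epsilon(T)$ act by $\eta\mapsto T\eta = \psi^\epsilon(T\eta)\xi^\epsilon + (1-p^\epsilon)T\eta$.

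For the base case $n=1$, $LR(\chi,\epsilon)$ has exactly two diagrams, of weights $\psi^{\epsilon_1}(T_1\xi^{\epsilon_1})\xi = \varphi^{\epsilon_1}(T_1)\xi$ and $(1-p^{\epsilon_1})T_1\xi^{\epsilon_1}$, summing to $\mu_1(T_1)\xi$. For the inductive step I set $\chi_0=\chi|_{\{2,\dots,n\}}$, $\epsilon_0 = (\epsilon_2,\dots,\epsilon_n)$ and apply $\mu_1(T_1)$ to the $(n-1)$-identity. Fixing $D'\in LR(\chi_0,\epsilon_0)$, its weight is a scalar multiple of a reduced word $w = \psi(V^{(1)})\otimes\cdots\otimes\psi(V^{(k)})$ read off from the blocks of $D'$ reaching the top gap, left to right, consecutive factors of different shades (with $w=\xi$ if $k=0$). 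Using the facts above I compute $\mu_1(T_1)w$ and match it against the recursive rules for $LR(\chi,\epsilon)$: when $\chi(1)=\ell$ (so the new node $p$ is on the left and the nearest top-gap spine of $D'$ is the leftmost one, $V^{(1)}$) and $V^{(1)}$ exists with shade $\epsilon_1$, I get
\begin{align*}
\mu_1(T_1)w = \psi^{\epsilon_1}\!\big(T_1\psi(V^{(1)})\big)\,\psi(V^{(2)})\otimes\cdots \;+\; \big((1-p^{\epsilon_1})T_1\psi(V^{(1)})\big)\otimes\psi(V^{(2)})\otimes\cdots,
\end{align*}
which are exactly the weights of the two diagrams obtained from $D'$ by joining $p$ to $V^{(1)}$ with and without terminating the merged spine at $p$; when instead $k=0$ or $V^{(1)}$ has shade $\ne\epsilon_1$, I get $\mu_1(T_1)w = \varphi^{\epsilon_1}(T_1)w + \big((1-p^{\epsilon_1})T_1\xi^{\epsilon_1}\big)\otimes w$, the weights of the diagrams obtained by leaving $p$ isolated and by starting a new leftmost spine at $p$. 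The case $\chi(1)=r$ is symmetric, acting on the right end of $w$. As the recursive definition of $LR(\chi,\epsilon)$ permits precisely these extensions of $D'$, summing over $D'$ proves \eqref{range_computation}.

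To pass to \eqref{moment_formula} I apply the functional $\psi$ (so $\varphi(\,\cdot\,)=\psi(\,\cdot\,\xi)$) to \eqref{range_computation}. Any $D\in LR_k(\chi,\epsilon)$ with $k\geq1$ has $\psi(D;T_1,\dots,T_n)$ in the length-$k$ component of $\mc X$, so $\psi$ kills it; and for $D\in LR_0(\chi,\epsilon)$, $\psi(D;T_1,\dots,T_n)=\big(\prod_{V\in\pi}\psi(V;T_1,\dots,T_n)\big)\xi$ with $\pi\in BNC(\chi,\epsilon)$ the partition of $D$. Since $D\mapsto\pi$ is a bijection $LR_0(\chi,\epsilon)\to BNC(\chi,\epsilon)$, this gives $\varphi(\mu_1(T_1)\cdots\mu_n(T_n)) = \sum_{\sigma\in BNC(\chi,\epsilon)}\prod_{V\in\sigma}\psi(V;T_1,\dots,T_n)$. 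Next, for a block $V=\{k_1<\cdots<k_r\}$ of shade $\epsilon$, an induction on $r$ based on the identity $\psi^\epsilon\big(T_{k_1}(1-p^\epsilon)T_{k_2}(1-p^\epsilon)\cdots\xi^\epsilon\big) = \psi^\epsilon\big(T_{k_1}T_{k_2}(1-p^\epsilon)\cdots\xi^\epsilon\big) - \varphi^\epsilon(T_{k_1})\,\psi^\epsilon\big(T_{k_2}(1-p^\epsilon)\cdots\xi^\epsilon\big)$ yields
\begin{align*}
\psi(V;T_1,\dots,T_n) = \sum_{\rho}(-1)^{|\rho|-1}\prod_{W\in\rho}\varphi^\epsilon\!\Big(\prod_{w\in W}T_w\Big),
\end{align*}
the sum being over partitions $\rho$ of $V$ into runs of elements consecutive in $V$. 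Substituting this and distributing the product over the blocks of $\sigma$, a choice of such a run-partition $\rho_V$ for each $V\in\sigma$ assembles to a partition $\pi\in\mc P(n)$ with $\pi\leq\sigma$ and $\pi|_V=\rho_V$; then $|\pi|-|\sigma| = \sum_V(|\rho_V|-1)$, the product of signs is $(-1)^{|\pi|-|\sigma|}$, and the product of the $\varphi^\epsilon$-factors is $\varphi_\pi(T_1,\dots,T_n)$ (each block of $\pi$ being monochromatic, as $\pi\leq\sigma$). Conversely the $\pi$ arising this way are exactly those with $\pi\lrleq\sigma$: if each $\pi|_V$ is a run-partition, its blocks sit on disjoint vertical segments of the spine of $V$, so no two are piled; and if $\pi\lrleq\sigma$ then within each $V\in\sigma$ no two blocks of $\pi$ are piled, which forces $\pi|_V$ to be a run-partition — the same argument as in the proof of Lemma \ref{lateral_refinement}. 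Therefore
\begin{align*}
\varphi(\mu_1(T_1)\cdots\mu_n(T_n)) = \sum_{\sigma\in BNC(\chi,\epsilon)}\ \sum_{\substack{\pi\in BNC(\chi)\\ \pi\lrleq\sigma}}(-1)^{|\pi|-|\sigma|}\,\varphi_\pi(T_1,\dots,T_n),
\end{align*}
and swapping the order of summation yields \eqref{moment_formula}; by Lemma \ref{lateral_refinement} the inner condition $\sigma\lrgeq\pi$ may equivalently be written $\sigma\geq\pi$.

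The hard part will be the inductive step for \eqref{range_computation}: one must verify that the four-way case split in the recursive construction of $LR(\chi,\epsilon)$ — new node on the left or on the right, and the nearest top-gap spine of $D'$ absent, present of shade $\epsilon_1$, or present of the other shade — matches term by term the output of $\mu_1(T_1)$ read through the tensor presentation of the free product, the key point being that on a single $(1-p^{\epsilon_1})\mc X^{\epsilon_1}$-letter both $\lambda^{\epsilon_1}(T_1)$ and $\rho^{\epsilon_1}(T_1)$ act by simple application of $T_1$, so that the left/right distinction disappears inside a monochromatic block. Granting this, the combinatorics leading to \eqref{moment_formula} is routine, since the necessary ``piled blocks'' analysis is already contained in Lemma \ref{lateral_refinement}.
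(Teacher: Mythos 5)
Your proof is correct and follows essentially the same approach as the paper's: induction on $n$ for \eqref{range_computation}, with the same case split on whether the nearest top-gap spine exists and matches the shade $\epsilon_1$; and for \eqref{moment_formula}, restricting to $LR_0(\chi,\epsilon)$, expanding each block via $(1-p^\epsilon)=1-p^\epsilon$ into an alternating sum over ``cuts'' (your run-partitions are the paper's index sets $1\leq q_1<\cdots<q_m\leq s-1$), identifying these with lateral refinements of $\sigma$, and swapping the order of summation. Your closing remark that, by Lemma \ref{lateral_refinement}, the condition $\sigma\lrgeq\pi$ is equivalent to $\sigma\geq\pi$ when $\sigma\in BNC(\chi,\epsilon)$ is a correct (and useful) observation, though not used in the paper's statement.
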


\begin{proof}
We establish (\ref{range_computation}) via induction on $n$. The base case is clear, so we assume the formula holds for $n-1$ operators and apply it as
	\begin{align*}
		\mu_2(T_2)\cdots \mu_n(T_n)\xi =\sum_{D\in LR(\chi_0,\epsilon_0)} \psi(D;T_2,\ldots, T_n),
	\end{align*}
where $\chi_0=\chi\mid_{\{2,\ldots,n\}}$ and $\epsilon_0=(\epsilon_2,\ldots, \epsilon_n)$. Fix a $D\in LR(\chi_0,\epsilon_0)$ and assume $\mu_1=\lambda$. Either there is a leftmost spine in $D$ of the shade $\epsilon_1$ reaching the top gap, or there is not (meaning either the nearest spine is the wrong shade or that $D$ has no spines reaching the top gap). In the former case, writing $\psi(D;T_2,\ldots, T_n)$ as $x_1\otimes\cdots\otimes x_m$ this implies $x_1\in \mc{X}^{\epsilon_1}$. Hence
	\begin{align*}
		\lambda(T_1)x_1\otimes\cdots\otimes x_m &= \psi(T_1(1-p^{\epsilon_1})x_1) x_2\otimes\cdots\otimes x_m + (1-p^{\epsilon_1})T_1(1-p^{\epsilon_1})x_1\otimes x_2\otimes\cdots\otimes x_m\\
			&=\psi(D_1;T_1,\ldots, T_n)+\psi(D_2;T_1,\ldots, T_n),
	\end{align*}
where $D_1,D_2\in LR(\chi,\epsilon)$ are the diagrams constructed from $D$ by adding rib and, respectively, terminating the leftmost spine in $D$ at the new top node or extending the leftmost spine in $D$.

If there is no leftmost spine of the same shade as $\epsilon_1$ then $\psi(D;T_2,\ldots, T_n)$ can be written in the same way as before except $x_1\not\in \mc{X}^{\epsilon_1}$ (if $D$ has no spines reaching the top gap then this is simply a scalar multiple of $\xi$). Hence
	\begin{align*}
		\lambda(T_1)x_1\otimes\cdots\otimes x_m&= \psi^{\epsilon_1}(T_1\xi^{\epsilon_1}) x_1\otimes\cdots \otimes x_m + (1-p^{\epsilon_1})T_1\xi^{\epsilon_1}\otimes x_1\otimes\cdots\otimes x_m\\
			&=\psi(E_1;T_1,\ldots, T_n) + \psi(E_2; T_1,\ldots, T_n),
	\end{align*}
where $E_1, E_2\in LR(\chi,\epsilon)$ are the diagrams constructed from $D$ by, respectively, leaving the new top node isolated or adding a new rib and spine.

Since every $D\in LR(\chi,\epsilon)$ is constructed from exactly one diagram in $LR(\chi_0,\epsilon_0)$ we have
	\begin{align*}
		\lambda(T_1)\mu_2(T_2)\cdots \mu_n(T_n)\xi = \sum_{D\in LR(\chi,\epsilon)} \psi(D;T_1,\ldots T_n).
	\end{align*}
The case $\mu_1=\rho$ is exactly the same upon replacing ``leftmost'' with ``rightmost'' and the considerations about $x_1$ with ones about $x_m$.

Now, $\varphi(\mu_1(T_1)\cdots \mu_n(T_n))$ is given by applying $\psi$ to the left side of (\ref{range_computation}).
So only the terms on the right whose vector parts are $\xi$ will survive, that is, the terms corresponding to $E\in LR_0(\chi,\epsilon)$. Fix such a diagram and let $\sigma\in BNC(\chi,\epsilon)$ be the corresponding partition. We examine
	\begin{align*}
		\psi(E;T_1,\ldots, T_n) = \prod_{W\in \sigma} \psi(W;T_1,\ldots, T_n).
	\end{align*}
For $W=\{l_1<\cdots< l_s\}\in \sigma$ we have
	\begin{align*}
		\psi(W;T_1,\ldots, T_n)&=\psi^{\epsilon(W)}\left(T_{l_1}(1-p^{\epsilon(W)}) T_{l_2} \cdots (1-p^{\epsilon(V)}) T_{l_s}\xi^{\epsilon(V)} \right)\xi\\
					&=\sum_{1\leq q_1<\cdots <q_m\leq s-1} (-1)^m \varphi^{\epsilon(W)}(T_{l_1}\cdots T_{l_{q_1}})\cdots\varphi^{\epsilon(V)}( T_{l_{q_m+1}}\cdots T_{l_s})\xi.
	\end{align*}
Each term in the last sum corresponds to a lateral refinement $\pi_W=\{V_1,\ldots, V_{m+1}\}$ of $W$, weighted by $(-1)^{|\pi_W|-|W|}$. As any lateral refinement of $\sigma$ is simply a collection of lateral refinements of its individual blocks, we see that $\pi=\bigcup_{W\in \sigma} \pi_W$ is a lateral refinement of $\sigma$. The overall weight associated to $\pi$ is $\prod_{W\in \sigma} (-1)^{|\pi_W|-|W|} = (-1)^{|\pi|-|\sigma|}$. Thus we obtain
	\begin{align*}
		\psi(E;T_1,\ldots, T_n)= \sum_{\substack{\pi\in BNC(\chi)\\ \pi\lrleq \sigma}} (-1)^{|\pi|-|\sigma|} \varphi_\pi(T_1,\ldots, T_n).
	\end{align*}
Summing over $E\in LR_0(\chi,\epsilon)$ (or equivalently $\sigma\in BNC(\chi,\epsilon)$) and reversing the order of the two summations yields (\ref{moment_formula}).
\end{proof}

\begin{cor}\label{bi-free_moment_formula}
Let $z'$ and $z''$ be a pair of two-faced families in $(\mc{A},\varphi)$. Then $z'$ and $z''$ are bi-free if and only if for every map $\alpha\colon\{1,\ldots, n\}\rightarrow I\sqcup J$ and $\epsilon\in\{',''\}^n$ we have
	\begin{align}\label{bi-free_moment_formula_formula}
		\varphi_\alpha\left(z^\epsilon\right)= \sum_{\pi\in BNC(\alpha)} \left[\sum_{\substack{\sigma\in BNC(\alpha,\epsilon)\\ \sigma\lrgeq \pi}} (-1)^{|\pi|-|\sigma|}\right] \varphi_\pi(z^\epsilon),
	\end{align}
where $z^\epsilon = \left(z_{\alpha(1)}^{\epsilon_1},\ldots,z_{\alpha(n)}^{\epsilon_n}\right)$.
\end{cor}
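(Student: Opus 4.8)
The plan is to deduce the corollary directly from equation~\eqref{moment_formula} of the preceding proposition together with Voiculescu's universal-polynomial characterization of bi-freeness recorded in Subsection~\ref{voiculescu}. The starting observation is that, given $\alpha\colon\{1,\ldots,n\}\to I\sqcup J$ and $\epsilon\in\{',''\}^n$, one may partition $\{1,\ldots,n\}$ according to $\epsilon$ and realize the operators $z_{\alpha(k)}^{\epsilon_k}$ inside a free product space: if $z'$ and $z''$ are bi-free then, by definition, there is a free product $(\mc X,p,\xi)=(\mc X',p',\xi')*(\mc X'',p'',\xi'')$ and unital homomorphisms $l^\epsilon$, $r^\epsilon$ so that $T_k := \mu_k\big(h^{\epsilon_k}(z_{\alpha(k)}^{\epsilon_k})\big)$ (where $h$ is $l$ or $r$ depending on whether $\alpha(k)\in I$ or $\alpha(k)\in J$, and $\mu_k\in\{\lambda,\rho\}$ is chosen by $\chi^\alpha$) has the same joint distribution as the original family. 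Applying the proposition to these $T_k$ with $\chi=\chi^\alpha$ gives exactly the right-hand side of~\eqref{bi-free_moment_formula_formula} for $\varphi_\alpha(z^\epsilon)$, since $\varphi_\pi(T_1,\ldots,T_n)=\varphi_\pi(z^\epsilon)$ blockwise. This proves the forward implication.

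For the converse, I would argue that the right-hand side of~\eqref{bi-free_moment_formula_formula} is itself a universal polynomial expression in the mixed moments of $z^\epsilon$ indexed by subsets of $\{1,\ldots,n\}$, with coefficients depending only on $\alpha$ and $\epsilon$ and not on the particular families. More precisely, if one sets $R_\alpha^{(0)}(z) := \varphi_\alpha(z) - \sum_{\pi\in BNC(\alpha),\,\pi\neq 1_\alpha} c_{\pi,\alpha} \varphi_\pi(z)$ for suitable universal integer coefficients $c_{\pi,\alpha}$ (the inner bracketed sums of~\eqref{moment_formula} restricted to proper $\pi$), then the forward direction shows $R_\alpha^{(0)}$ agrees with Voiculescu's $R_\alpha$ when evaluated on bi-free families, because the leading coefficient is $1$ (the term $\pi=1_\alpha$ contributes $\sigma=1_\alpha$ only, with sign $+1$) and the remaining terms have the required homogeneity and subset structure from Proposition~5.6/Theorem~5.7 of~\cite{voiculescu2013freei}. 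By uniqueness of the universal polynomials (the conditions (i)--(iii) in Subsection~\ref{voiculescu} pin down $R_\alpha$), one concludes $R_\alpha = R_\alpha^{(0)}$ identically. Hence if $z'$, $z''$ satisfy~\eqref{bi-free_moment_formula_formula} for all $\alpha$ and $\epsilon$, then $R_\alpha(z^\epsilon)$ is additive over the decomposition $z^\epsilon \mapsto (z',z'')$—and an elementary induction on $n$ upgrades the validity of~\eqref{bi-free_moment_formula_formula} for all $\alpha,\epsilon$ into the statement that all joint moments of the pair agree with those of a bi-free pair, which is precisely bi-freeness.

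The main obstacle I anticipate is the converse direction, specifically justifying that satisfying the moment identity~\eqref{bi-free_moment_formula_formula} for \emph{all} $\alpha$ and all shadings $\epsilon$ is enough to force bi-freeness, rather than merely reproducing the value of a single universal polynomial. The cleanest route is to note that bi-freeness of $z'$ and $z''$ is equivalent to the existence of a free-product model whose induced moments match; one builds such a model by taking, for each $\epsilon$-homogeneous sub-family, its GNS-type representation, forming the free product, and checking that~\eqref{range_computation} together with the hypothesis forces the joint moments of $z^\epsilon$ in this model to coincide with the given $\varphi_\alpha(z^\epsilon)$. One must also check the mild bookkeeping that $BNC(\alpha,\epsilon)$ and the sign weights depend on $\alpha$ only through $\chi^\alpha$ (Remark~\ref{respect_to_nica}), so that the Proposition applies verbatim; this is routine. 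A secondary point to handle carefully is that in the forward direction the homomorphisms $l^\epsilon, r^\epsilon$ need not be injective, but since we only compare moments this causes no difficulty.
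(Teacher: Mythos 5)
Your forward direction matches the paper's: apply the preceding proposition to the free-product representation $(\mc{X},p,\xi)$ guaranteed by the definition of bi-freeness, noting that $\varphi_\pi(T_1,\ldots,T_n)=\varphi_\pi(z^\epsilon)$ block-by-block.

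For the converse, your first route (via uniqueness of Voiculescu's universal polynomials) has a genuine gap, which you do flag. Two issues: (a) the bracketed coefficients in~\eqref{bi-free_moment_formula_formula} depend on $\epsilon$, so your $R_\alpha^{(0)}$ is really a family $R_{\alpha,\epsilon}^{(0)}$ of expressions in the \emph{mixed} moments of $z^\epsilon$, not a single polynomial in the moments of one two-faced family as Voiculescu's $R_\alpha$ is; identifying them via conditions (i)--(iii) does not go through as stated. (b) Even if it did, the cumulant property (iii) is a \emph{consequence} of bi-freeness, not a characterization of it; additivity of $R_\alpha$ on $z'+z''$ does not by itself imply the pair is bi-free. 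Your ``cleanest route,'' however, is exactly the paper's argument: form the free product of the universal representations of $z'$ and $z''$ (the paper cites the proof of Proposition 2.9 of \cite{voiculescu2013freei}), compute the joint moments in this model via the proposition, and observe that the hypothesis asserts these coincide with the given $\varphi_\alpha(z^\epsilon)$; hence the joint distribution agrees with that of a bi-free pair, which is the definition of bi-freeness. Your bookkeeping remark that everything depends on $\alpha$ only through $\chi^\alpha$ (Remark~\ref{respect_to_nica}) is indeed the routine point that needs to be noted.
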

\begin{proof}
If $z'$ and $z''$ are bi-free then this immediately follows by applying the previous proposition to the representation guaranteed by the definition of bi-freeness.\par
Conversely, suppose $z'$ and $z''$ satisfy (\ref{bi-free_moment_formula_formula}) for each $\alpha$ and $\epsilon$. As in the proof of Proposition 2.9 of \cite{voiculescu2013freei}, we consider the universal representations of $z'$ and $z''$. That the joint representation in their free product is the same as the joint representation of $z'$ and $z''$ follows precisely from (\ref{bi-free_moment_formula_formula}).
\end{proof}

%%%%%%%%%%%%%%%%%%%%%%%%%%%%%%%%%%%%%%%%%%%%%%%%%%%%%%%%%%%%%%%%%%%%%%%%%%%%%%%%%%%%%
\subsection{Summation considerations}

For $\chi : \{1, \ldots, n\} \to \{\ell, r\}$, $\epsilon \in \{', ''\}^n$, and $\pi\in BNC(\chi)$, we will write $\pi \leq \epsilon$ where we think of $\epsilon$ as the induced partition in $\mathcal{P}(n)$.
\begin{prop}
\label{annoyingsumresult}
Let $\chi : \{1, \ldots, n\} \to \{\ell, r\}$ and $\epsilon \in \{', ''\}^n$.
Then for every $\pi \in BNC(\chi)$ such that $\pi \leq \epsilon$,
\[
\sum_{\substack{\sigma \in BNC(\chi, \epsilon) \\ \sigma \lrgeq \pi}} (-1)^{|\pi| - |\sigma|} = \sum_{\substack{\sigma \in BNC(\chi) \\ \pi \leq \sigma \leq \epsilon}} \mu_{BNC}(\pi, \sigma).
\]
\end{prop}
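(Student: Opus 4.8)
The plan is to interpret both sides of the claimed identity as computing the same quantity via two different decompositions, and to reduce the statement to a local fact about single blocks using multiplicativity. The right-hand side $\sum_{\pi \leq \sigma \leq \epsilon} \mu_{BNC}(\pi, \sigma)$ is, by definition of the M\"obius function, the evaluation $(\mu_{BNC} * \zeta_{BNC})$-type sum restricted to the interval $[\pi, \epsilon \wedge 1_\chi]$ — more precisely it equals $\delta_{BNC}(\pi, \epsilon)$ if $\epsilon \in BNC(\chi)$, but in general $\epsilon$ need not be bi-non-crossing, so we must be careful: the sum ranges only over $\sigma \in BNC(\chi)$ with $\pi \leq \sigma \leq \epsilon$. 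So first I would use Proposition \ref{breakingintervalsintofullintervals} to decompose the interval $[\pi, \rho]$, where $\rho$ is the largest element of $BNC(\chi)$ below $\epsilon$ (one should check this exists; it is the join, inside $BNC(\chi)$, of the blocks of $\pi$ grouped according to $\epsilon$, which stays bi-non-crossing since $\pi \leq \epsilon$ and blocks of $\pi$ sharing an $\epsilon$-class are nested appropriately — this uses the same kind of reasoning as Lemma \ref{lateral_refinement}). Both sides are then multiplicative over the block structure of $\rho$, so it suffices to treat the case $\rho = 1_\chi$, i.e. $\epsilon$ is the constant shading and $\pi$ is arbitrary in $BNC(\chi)$.

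For the reduced case, the right-hand side becomes $\sum_{\pi \leq \sigma \leq 1_\chi} \mu_{BNC}(\pi, \sigma) = \delta_{BNC}(\pi, 1_\chi)$, which is $1$ if $\pi = 1_\chi$ and $0$ otherwise. So I must show that when $\epsilon$ is constant, $\sum_{\sigma \in BNC(\chi,\epsilon),\ \sigma \lrgeq \pi} (-1)^{|\pi| - |\sigma|}$ equals $1$ if $\pi = 1_\chi$ and $0$ otherwise. When $\epsilon$ is constant, $BNC(\chi,\epsilon) = BNC(\chi)$ (every diagram in $LR_0$ with a single shade is allowed, since the shade-matching constraint in the recursive construction is vacuous). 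Moreover, by Lemma \ref{lateral_refinement}, $\pi \leq \sigma$ automatically gives $\pi \lrleq \sigma$ for $\sigma \in BNC(\chi,\epsilon)$ — wait, that lemma's hypothesis is $\sigma \in BNC(\alpha,\epsilon)$, which here holds for all $\sigma \in BNC(\chi)$. Actually the cleaner route: the condition $\sigma \lrgeq \pi$ with $\sigma \in BNC(\chi,\epsilon)$ and $\epsilon$ constant is equivalent to $\pi \leq \sigma$ in $BNC(\chi)$, because Lemma \ref{lateral_refinement} says any such $\sigma \geq \pi$ already satisfies $\pi \lrleq \sigma$. Hence the left sum is $\sum_{\pi \leq \sigma \leq 1_\chi,\ \sigma \in BNC(\chi)} (-1)^{|\pi| - |\sigma|}$. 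Then I would invoke the fact, recorded in Remark \ref{mobiusfunctionremarks}, that $\mu_{BNC}(\pi,\sigma) = \mu(s_\chi^{-1}\cdot\pi, s_\chi^{-1}\cdot\sigma)$, and the classical computation in $NC(n)$ that $\sum_{\pi \leq \sigma \leq 1_n}(-1)^{|\pi|-|\sigma|}$ equals $\mu_{NC}([\pi, 1_n])$ summed appropriately — more directly, the identity $\sum_{\pi \le \sigma}(-1)^{|\pi|-|\sigma|} \ne$ standard; instead I should observe the two sums are literally both equal to $\sum_{\sigma:\ \pi\le\sigma\le 1_\chi}\mu_{BNC}(\pi,\sigma)$ once one knows $\mu_{BNC}(\pi,\sigma) = (-1)^{|\pi|-|\sigma|}$ for every $\sigma$ in this particular interval — but that last equality is false in general. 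So the honest reduction is: in the constant-shade case the RHS is $\delta(\pi,1_\chi)$, and the LHS is a signed count over all $\sigma \in BNC(\chi)$ above $\pi$; I compute the LHS by grouping $\sigma$ according to the coarsest interval decomposition and using that $\sum_{\sigma \in [\pi,1_\chi]}(-1)^{|\sigma|}$ telescopes — concretely, factor $[\pi, 1_\chi]$ via Proposition \ref{breakingintervalsintofullintervals} and use that for a single full lattice $BNC(\beta)$, $\sum_{\sigma \in BNC(\beta)}(-1)^{m_\beta - |\sigma|} = 0$ unless $m_\beta = 1$, which is exactly the statement that the Euler characteristic of $NC(m)$ vanishes for $m > 1$ (equivalently $\sum_\sigma \mu(0_m,\sigma)$-type cancellation).

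The main obstacle I anticipate is the bookkeeping in the reduction step: verifying that $\rho := \bigvee\{\,$blocks of $\pi$ grouped by $\epsilon$-class$\,\}$ genuinely lands in $BNC(\chi)$ and genuinely is the top of $\{\sigma \in BNC(\chi) : \pi \leq \sigma \leq \epsilon\}$, and checking that the interval decomposition of Proposition \ref{breakingintervalsintofullintervals} is compatible simultaneously with the lateral-order condition $\lrgeq$ on the left and the plain-order condition on the right. Once the problem is localized to a single full lattice $BNC(\beta)$ with constant shading, both sides are elementary: RHS $= \delta(0_\beta, 1_\beta)$, LHS $= \sum_{\sigma\in BNC(\beta)}(-1)^{|0_\beta|-|\sigma|}$, and the vanishing of the latter for $|\beta|>1$ follows from the standard alternating-sum identity on $NC(m)$ (which is Remark \ref{mobiusfunctionremarks} applied with $\zeta_{BNC}^{-1}$ and the observation $\sum_\sigma (-1)^{\ldots}$ is $(\zeta^{-1}*\zeta)$-telescoping, or directly the known value $\sum_{\sigma\in NC(m)}(-1)^{|\sigma|} = (-1)^m C_{m-1}'$-type Kreweras cancellation giving $0$ for $m\ge 2$). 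I would present this last computational input as a cited classical fact rather than reprove it.
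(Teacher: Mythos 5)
There is a genuine gap. The crux of your argument is the assertion that when $\epsilon$ is constant, $BNC(\chi,\epsilon) = BNC(\chi)$ because ``the shade-matching constraint in the recursive construction is vacuous.'' This is false: when all shades agree, the rule in the definition of $LR_0(\chi,\epsilon)$ is not vacuous but \emph{maximally binding} --- if any spine reaches the top gap, the nearest one always shares the new node's shade, so the new node is forced to attach to it and may neither be left isolated nor begin a new spine. In particular, for $\chi \equiv \ell$ and constant $\epsilon$, $BNC(\chi,\epsilon)$ consists only of \emph{interval} partitions. Concretely, take $n = 3$, $\chi \equiv \ell$, $\epsilon$ constant, $\pi = 0_3$: the true left-hand side sums over the four interval partitions and gives $1 - 1 - 1 + 1 = 0$, agreeing with the right-hand side $\sum_{\sigma \in NC(3)}\mu(0_3,\sigma) = 0$; your version would sum over all five elements of $NC(3)$ and give $1 - 3 + 1 = -1$. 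So the base case of your reduction is already incorrect, not merely a citation to be filled in. (Secondarily, the claim that the left-hand side factors over the blocks of $\rho$ also needs an argument, since membership in $BNC(\chi,\epsilon)$ is a constraint about spine adjacency in the whole diagram, not a blockwise condition --- though this is the lesser issue.)

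The paper's reduction runs in the orthogonal direction: rather than forcing $\epsilon$ to be constant, it forces $\chi \equiv \ell$ (Lemma \ref{annoyingsumfreecase}), and in that case the identity is extracted by comparing two expansions of the mixed moment $\varphi(X_1^{\epsilon_1}\cdots X_n^{\epsilon_n})$ of two \emph{free} families --- one from Corollary \ref{bi-free_moment_formula} and one from the usual free-cumulant formula with vanishing mixed cumulants. No direct combinatorial evaluation of the left-hand side is ever attempted. Lemmas \ref{changingalefttoaright} and \ref{changingorderofaleftandright} then transport the equality from $\chi\equiv\ell$ to arbitrary $\chi$ one node at a time. If you want to salvage a more combinatorial route, you would need an honest description of $BNC(\chi,\epsilon)$ (e.g.\ via the tangled/separated dichotomy of Lemma \ref{lateral_refinement}), and the resulting signed count will not reduce to a single Euler-characteristic fact about $NC(m)$.
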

To prove Proposition \ref{annoyingsumresult} we will appeal to free probability to handle the following case and reduce all others to it.
\begin{lem}
\label{annoyingsumfreecase}
Let $\chi : \{1, \ldots, n\} \to \{\ell, r\}$ with $\chi \equiv \ell$ and $\epsilon \in \{', ''\}^n$.
Then for every $\pi \in BNC(\chi)$ such that $\pi \leq \epsilon$,
\[
\sum_{\substack{\sigma \in BNC(\chi,\epsilon) \\ \sigma \lrgeq \pi}} (-1)^{|\pi| - |\sigma|} = \sum_{\substack{\sigma \in BNC(\chi) \\ \pi \leq \sigma \leq \epsilon}} \mu_{BNC}(\pi, \sigma).
\]
\end{lem}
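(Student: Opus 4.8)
The plan is to reduce Lemma \ref{annoyingsumfreecase} to a purely free-probabilistic identity, namely the analogous statement for non-crossing partitions, which can then be verified using the known structure of the free moment-cumulant machinery.

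First I would observe that when $\chi \equiv \ell$, all nodes lie on the left segment, so the bi-non-crossing diagrams in $LR_0(\chi, \epsilon)$ are genuinely just (mirror images of) the usual non-crossing diagrams with a two-colour shading, and $BNC(\chi)$ is canonically identified with $NC(n)$ via $s_\chi$ (which in this case is essentially the order-reversing permutation, harmless since the lattice structure is preserved). Under this identification, $BNC(\chi,\epsilon)$ corresponds to the set of non-crossing partitions $\sigma$ whose restriction to each shade-class is $\epsilon$-monochromatic and whose shape is consistent with a ``stacking'' that never joins two $\epsilon$-monochromatic pieces that are nested/adjacent incorrectly --- concretely, by Lemma \ref{lateral_refinement}, every $\sigma \in BNC(\chi,\epsilon)$ satisfies $\sigma \le \epsilon$. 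The key point is that in the all-left case, a partition $\sigma \in BNC(\chi)$ with $\sigma \le \epsilon$ lies in $BNC(\chi,\epsilon)$ precisely when no lateral refinement is forced, i.e.\ exactly when $\sigma$ itself can be realized as an $LR_0$-diagram; and moreover, the lateral-refinement relation $\pi \lrleq \sigma$ coincides, under $s_\chi^{-1}$, with ordinary refinement $\pi \le \sigma$ in $NC(n)$ together with the condition that $\pi$ is obtained from $\sigma$ by cutting spines between ribs.

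The substantive step is then the following identity in $NC(n)$: for $\pi \in NC(n)$ with $\pi \le \epsilon$,
\[
\sum_{\substack{\sigma \in NC(n),\ \sigma \ge \pi \\ \sigma \text{ realizable as an } LR_0\text{-diagram}}} (-1)^{|\pi| - |\sigma|}
= \sum_{\substack{\sigma \in NC(n) \\ \pi \le \sigma \le \epsilon}} \mu_{NC}(\pi, \sigma),
\]
which I would prove by appealing to free probability directly: apply the moment formula (\ref{moment_formula}) of the preceding Proposition with all $\mu_j = \lambda$ in the free (rather than bi-free) setting, where the left-hand side is the coefficient of $\varphi_\pi$ in the expansion of $\varphi(\lambda(T_1)\cdots\lambda(T_n))$ for freely independent families; on the other hand, the classical free moment-cumulant relation gives $\varphi(T_1 \cdots T_n) = \sum_{\sigma \in NC(n)} \kappa_\sigma$, and when the $T_j$ come from two free subalgebras determined by $\epsilon$, only $\sigma \le \epsilon$ survive, with $\kappa_\sigma = \sum_{\pi \le \sigma} \varphi_\pi \mu_{NC}(\pi,\sigma)$. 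Matching coefficients of $\varphi_\pi$ on both sides --- using that $\lambda$ is a homomorphism, so $\varphi(\lambda(T_1)\cdots\lambda(T_n)) = \varphi(T_1 \cdots T_n)$ --- yields the claimed equality. This is the standard ``free independence $\iff$ vanishing mixed cumulants'' computation repackaged, so it can be cited from the free-probability literature rather than reproved.

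The main obstacle I anticipate is making the dictionary between $LR_0$-diagrams in the all-left case and non-crossing partitions fully rigorous --- in particular verifying that ``$\sigma$ is realizable as an $LR_0(\chi,\epsilon)$-diagram'' is equivalent to ``$\sigma \le \epsilon$ and $\sigma$ has no two $\epsilon$-monochromatic blocks that are consecutive without a block of the other shade between them,'' and that this is exactly the condition picked out by the free moment expansion. Once that translation is in hand, the sign-counting identity is a direct consequence of the free moment-cumulant formula applied to two free subalgebras, with no further combinatorial work needed. A secondary subtlety is confirming that the inner sum on the right-hand side really does range only over $\sigma \le \epsilon$: this follows because $\mu_{BNC}(\pi,\sigma)$ factors multiplicatively and $\pi \le \epsilon$ forces any relevant $\sigma$ appearing with nonzero total contribution to also respect the $\epsilon$-coloring, matching the free side where cumulants across the two free subalgebras vanish.
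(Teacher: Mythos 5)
Your proposal is correct and takes essentially the same route as the paper: view the two $\epsilon$-shaded families as freely independent, compute $\varphi(X_1^{\epsilon_1}\cdots X_n^{\epsilon_n})$ once via the bi-free moment formula (Corollary \ref{bi-free_moment_formula}) and once via the classical free moment-cumulant expansion with mixed cumulants vanishing, then compare coefficients of $\varphi_\pi$. The only point to tighten is the coefficient-matching step: ``using that $\lambda$ is a homomorphism'' is not the justification — one must argue, as the paper does, that one may choose the two free families so that $\varphi_\pi$ is nonzero for exactly one $\pi$, which isolates each coefficient; also, the dictionary between $LR_0$-diagrams and non-crossing partitions that you flag as an obstacle is never actually needed once you invoke Corollary \ref{bi-free_moment_formula} directly in the all-left setting.
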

\begin{proof}
Let $\{X'_1, ..., X'_n\}$ and $\{X''_1, ..., X''_n\}$ be freely independent sets.
Note by Proposition 2.15b of \cite{voiculescu2013freei} these sets can be viewed as a bi-free pair of two faced families $X'$ and $X''$ with trivial right faces. Hence, by Corollary \ref{bi-free_moment_formula},
\[
\varphi\left(X^{\epsilon_1}_1 \cdots X^{\epsilon_n}_n\right)  = \sum_{\pi \in BNC(\chi)} \left(\sum_{\substack{\sigma \in BNC(\chi, \epsilon) \\ \sigma \lrgeq \pi}} (-1)^{|\pi| - |\sigma|}\right) \varphi_\pi\left(X^{\epsilon_1}_1, \ldots, X^{\epsilon_n}_n\right)
\]
Since $\chi \equiv \ell$, $BNC(\chi) = NC(n)$.
Thus, since $\{X'_1, ..., X'_n\}$ and $\{X''_1, ..., X''_n\}$ are free,
\begin{align*}
\varphi\left(X^{\epsilon_1}_1 \cdots X^{\epsilon_n}_n\right)
 &=  \sum_{\sigma \in BNC(\chi)}  \kappa_\sigma\left(X^{\epsilon_1}_1, \ldots, X^{\epsilon_n}_n\right)\\
 &=  \sum_{\substack{\sigma \in BNC(\chi)\\ \sigma \leq \epsilon}} \kappa_\sigma\left(X^{\epsilon_1}_1, \ldots, X^{\epsilon_n}_n\right)\\
 &=  \sum_{\substack{\sigma \in BNC(\chi)\\ \sigma \leq \epsilon}} \sum_{\substack{\pi \in BNC(\chi) \\ \pi \leq \sigma}}  \mu(\pi, \sigma) \varphi_\pi\left(X^{\epsilon_1}_1, \ldots, X^{\epsilon_n}_n\right)\\
 &=  \sum_{\substack{\pi \in BNC(\chi)\\ \pi \leq \epsilon}} \left(\sum_{\substack{\sigma \in BNC(\chi) \\ \pi\leq \sigma\leq \epsilon}} \mu(\pi, \sigma)\right)\varphi_\pi\left(X^{\epsilon_1}_1, \ldots, X^{\epsilon_n}_n\right).
\end{align*}
Since these expressions agree for any selection of $\{X'_1, ..., X'_n\}$ and $\{X''_1, ..., X''_n\}$ that are freely independent, by selecting $\{X'_1, ..., X'_n\}$ and $\{X''_1, ..., X''_n\}$ that are free and such that $\varphi_\pi\left(X^{\epsilon_1}_1, \ldots, X^{\epsilon_n}_n\right)$ is non-zero for precisely one $\pi$, the desired sums are obtained to be equal (as $\mu = \mu_{BNC}$ in this setting).
\end{proof}
We will use Lemma \ref{annoyingsumfreecase} to show that the desired equations in Proposition \ref{annoyingsumresult} hold.
To do so, we will show that an arbitrary bi-non-crossing partition can be obtained by a sequence of steps, preserving the summations in Proposition \ref{annoyingsumresult}, applied to a partition with all left nodes.
\begin{lem}
\label{changingalefttoaright}
Let $\chi : \{1, \ldots, n\} \to \{\ell, r\}$ with $\chi(n) = \ell$, $\epsilon \in \{', ''\}^n$, and $\pi \in BNC(\chi)$ be such that $\pi \leq \epsilon$.
Let $\hat{\chi} : \{1, \ldots, n\} \to \{\ell, r\}$ be such that
\[
\hat{\chi}(t) = \left\{
\begin{array}{ll}
\chi(t) & \mbox{if } t \neq n \\
r & \mbox{if } t= n
\end{array} \right. ,
\]
and let $\hat{\pi} \in BNC(\hat{\chi})$ be the unique shaded bi-non-crossing partition with the same blocks as $\pi$ (note $\hat{\pi} \leq \epsilon$ by construction).
Then
\[
\sum_{\substack{\sigma \in BNC(\chi, \epsilon) \\ \sigma \lrgeq \pi}} (-1)^{|\pi| - |\sigma|} = \sum_{\substack{\hat\sigma \in BNC(\hat{\chi}, \epsilon) \\ \hat\sigma \lrgeq \hat\pi}} (-1)^{|\hat\pi| - |\hat\sigma|}
\]
and
\[
\sum_{\substack{\sigma \in BNC(\chi) \\ \pi \leq \sigma \leq \epsilon}} \mu_{BNC}(\pi, \sigma) = \sum_{\substack{\hat\sigma \in BNC(\hat{\chi}) \\ \hat\pi \leq \hat\sigma \leq \epsilon}} \mu_{BNC}(\hat\pi, \hat\sigma).
\]
\end{lem}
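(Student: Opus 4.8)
The plan is to first recognize that changing the \emph{last} coordinate of $\chi$ from $\ell$ to $r$ does not change the underlying permutation $s_\chi$, which will make the second displayed equality a formality, and then to produce a block-preserving bijection between the relevant shaded diagrams, which will give the first.

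\textbf{Step 1 (reduce the second equality to $s_\chi=s_{\hat\chi}$).} Since $\chi(n)=\ell$, the index $n$ is the largest element of $\chi^{-1}(\ell)$, so $s_\chi(p)=n$ for $p=|\chi^{-1}(\ell)|$, while the remaining left indices $i_1<\cdots<i_{p-1}$ and right indices $j_1<\cdots<j_{n-p}$ all lie in $\{1,\ldots,n-1\}$. For $\hat\chi$ the left indices are $i_1<\cdots<i_{p-1}$ and the right indices are $j_1<\cdots<j_{n-p}<n$, so $n$ is the largest right index and again $s_{\hat\chi}(p)=n$; comparing the definition of $s$ coordinate by coordinate gives $s_{\hat\chi}(k)=i_k=s_\chi(k)$ for $k<p$ and $s_{\hat\chi}(k)=j_{n+1-k}=s_\chi(k)$ for $k>p$. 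Hence $s_\chi=s_{\hat\chi}$, so $BNC(\chi)=BNC(\hat\chi)$ as sublattices of $\mc{P}(n)$, the partition $\pi$ equals $\hat\pi$, the partition induced by $\epsilon$ and the order $\leq$ are unchanged, and $\mu_{BNC}(\pi,\sigma)=\mu(s_\chi^{-1}\cdot\pi,s_\chi^{-1}\cdot\sigma)=\mu_{BNC}(\hat\pi,\hat\sigma)$ by Remark \ref{mobiusfunctionremarks}. The second displayed equality then holds term by term.

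\textbf{Step 2 (reduce the first equality to $BNC(\chi,\epsilon)=BNC(\hat\chi,\epsilon)$).} By Step 1, the relation $\tau\lrgeq\pi$ only requires $\pi\leq\tau$ together with the condition that no two \emph{piled} blocks of $\pi$ lie in one block of $\tau$, and ``piled'' is the purely combinatorial condition $\max(\min V,\min W)\leq\min(\max V,\max W)$, which does not see the left/right labelling. So it is enough to prove that $BNC(\chi,\epsilon)$ and $BNC(\hat\chi,\epsilon)$ coincide as sets of partitions; then both sums in the first equality range over the same $\sigma$, with $|\pi|-|\sigma|=|\hat\pi|-|\hat\sigma|$.

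\textbf{Step 3 (the diagram bijection).} I would introduce the map $\Phi$ on diagrams which moves the node labelled $n$ from the foot of the left strand to the foot of the right strand (and back), reversing the direction of its single rib and altering nothing else. Since node $n$ is the unique node at the lowest level, its rib lies at a level occupied by no other feature, so $\Phi$ always outputs a legitimate diagram and is an involution-type bijection. One then checks, by induction on $n$ along the recursive construction of $LR(\chi,\epsilon)$, that $\Phi$ restricts to a bijection $LR(\chi,\epsilon)\to LR(\hat\chi,\epsilon)$ preserving the number of spines in the top gap, the block structure, and the left-to-right sequence of shades of the top-gap spines; the base case (the lone node $n$) is immediate, and in the inductive step one adds the node labelled $1$ on top, using $\chi(1)=\hat\chi(1)$ and the inductive agreement of the top-gap shade sequences of $D$ and $\Phi(D)$ to see that the admissible attachments of node $1$ are intertwined by $\Phi$. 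Restricting to $LR_0$ yields $BNC(\chi,\epsilon)=BNC(\hat\chi,\epsilon)$, which by Step 2 completes the proof.

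\textbf{Main obstacle.} The delicate point is precisely the claim in Step 3 that moving node $n$ leaves the ordered sequence of top-gap spines intact, since this is exactly what makes the induction close. It hinges on $n$ being the last index: the spine of node $n$'s block, when that block reaches the top, can be routed up on either side of node $n$ without crossing anything, so its horizontal slot in the top gap is not forced by the left/right placement of node $n$ (equivalently, under $s_\chi^{-1}=s_{\hat\chi}^{-1}$ both diagrams map to the same non-crossing diagram on $\{1,\ldots,n\}$, only the left/right label of the node in position $p$ changing). Carrying the top-gap shade sequence as an explicit invariant through the recursion is the technical heart of the argument.
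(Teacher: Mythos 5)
Your proof is correct and rests on the same underlying geometric fact as the paper's (node $n$ is the bottom node, so sliding it from the far left to the far right introduces no crossings), but you set it up more cleanly. Your Step 1 observation that $s_\chi = s_{\hat\chi}$ — so that $BNC(\chi)$ and $BNC(\hat\chi)$ are literally the same lattice and $\hat\pi=\pi$ — is a nice structural point that the paper leaves implicit; it makes the second displayed equality true term-by-term rather than requiring a separate check that the block-preserving bijection respects $\mu_{BNC}$. For the first equality, your induction on the $LR$ recursion (tracking the ordered top-gap shade sequence as the invariant) is filling in exactly the details the paper compresses into "clearly a bijection," and the argument closes for the reason you identify: node $n$'s spine, when it exists, begins as the unique spine in the base case, so its left-right slot relative to later spines is determined by those later nodes' sides rather than by node $n$'s own placement.
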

\begin{proof}
It is clear that the operator which takes an element $\sigma \in BNC(\chi, \epsilon)$ and constructs an element $\hat\sigma \in BNC(\hat{\chi}, \epsilon)$ with the same blocks as $\sigma$ corresponds to taking the bottom node of $\sigma$ which is on the left and placing this node on the right (keeping all strings connected).
 For example, consider the following diagrams.
\begin{align*}
		\begin{tikzpicture}[baseline]
			\draw[thick,dashed] (-.5,-.25) -- (-.5,2.75);
			\draw[thick,dashed] (.5,-.25) -- (.5,2.75);
			\node[left] at (-.5,2.5) {1};
			\draw[red,fill=red] (-.5,2.5) circle (0.05);
			\node[left] at (-.5,2) {2};
			\draw[ggreen,fill=ggreen] (-.5,2) circle (0.05);
			\node[left] at (-.5,1.5) {3};
			\draw[ggreen,fill=ggreen] (-.5,1.5) circle (0.05);
			\node[left] at (-.5,1) {4};
			\draw[red,fill=red] (-.5,1) circle (0.05);
			\node[left] at (-.5,.5) {5};
			\draw[ggreen,fill=ggreen] (-.5,.5) circle (0.05);
			\node[left] at (-.5,0) {6};
			\draw[ggreen,fill=ggreen] (-.5,0) circle (0.05);
			\draw[thick, ggreen] (-.5, 0) -- (0,0) -- (0,.5) -- (-.5, .5);
			\draw[thick, ggreen] (-.5, 2) -- (0,2) -- (0,1.5) -- (-.5, 1.5);
			\draw[thick] (.75, 1.25) -- (1.25, 1.25) -- (1.2, 1.2);
			\draw[thick] (1.25, 1.25) -- (1.2, 1.3);
		\end{tikzpicture}
		\begin{tikzpicture}[baseline]
			\draw[thick,dashed] (-.5,-.25) -- (-.5,2.75);
			\draw[thick,dashed] (.5,-.25) -- (.5,2.75);
			\node[left] at (-.5,2.5) {1};
			\draw[red,fill=red] (-.5,2.5) circle (0.05);
			\node[left] at (-.5,2) {2};
			\draw[ggreen,fill=ggreen] (-.5,2) circle (0.05);
			\node[left] at (-.5,1.5) {3};
			\draw[ggreen,fill=ggreen] (-.5,1.5) circle (0.05);
			\node[left] at (-.5,1) {4};
			\draw[red,fill=red] (-.5,1) circle (0.05);
			\node[left] at (-.5,.5) {5};
			\draw[ggreen,fill=ggreen] (-.5,.5) circle (0.05);
			\node[right] at (.5,0) {6};
			\draw[ggreen,fill=ggreen] (.5,0) circle (0.05);
			\draw[thick, ggreen] (.5, 0) -- (0,0) -- (0,.5) -- (-.5, .5);
			\draw[thick, ggreen] (-.5, 2) -- (0,2) -- (0,1.5) -- (-.5, 1.5);
		\end{tikzpicture}
\end{align*}
Such operation is clearly a bijection, maps $BNC(\chi, \epsilon)$ to $BNC(\hat{\chi}, \epsilon)$, $(-1)^{|\pi| - |\sigma|} = (-1)^{|\hat\pi| - |\hat\sigma|}$, and $\sigma \lrgeq \pi$ if and only if $\hat\sigma \lrgeq \hat\pi$.
Hence the first equation holds.
Similarly, by Remarks \ref{mobiusfunctionremarks}, it is clear that the second equation holds.
\end{proof}
\begin{lem}
\label{changingorderofaleftandright}
Let $\chi : \{1, \ldots, n\} \to \{\ell, r\}$ be such that there exists a $k \in \{1,\ldots, n-1\}$ such that $\chi(k) = \ell$ and $\chi(k+1) = r$, $\epsilon \in \{', ''\}^n$, and $\pi \in BNC(\chi)$ be such that $\pi \leq \epsilon$.
 Fix $k \in \{1,\ldots, n-1\}$.
Let $\hat{\epsilon} \in \{', ''\}^n$ be such that
\[
\hat{\epsilon}_t = \left\{
\begin{array}{ll}
\epsilon_t & \mbox{if } t \notin \{k, k+1\} \\
\epsilon_k & \mbox{if } t=k+1 \\
\epsilon_{k+1} & \mbox{if } t=k
\end{array} \right. ,
\]
let $\hat{\chi} : \{1, \ldots, n\} \to \{\ell, r\}$ be such that
\[
\hat{\chi}(t) = \left\{
\begin{array}{ll}
\chi(t) & \mbox{if } t \notin \{k, k+1\} \\
\chi(k) & \mbox{if } t=k+1 \\
\chi(k+1) & \mbox{if } t=k
\end{array} \right. ,
\]
and let $\hat\pi \in BNC(\hat{\chi})$ be the unique shaded bi-non-crossing partition obtained by interchanging $k$ and $k+1$ in $\pi$ (note $\hat{\pi} \leq \hat{\epsilon}$ by construction).
Then
\[
\sum_{\substack{\sigma \in BNC(\chi,\epsilon) \\ \sigma \lrgeq \pi}} (-1)^{|\pi| - |\sigma|} = \sum_{\substack{\hat\sigma \in BNC(\hat\chi, \hat\epsilon) \\ \hat\sigma \lrgeq \hat\pi}} (-1)^{|\hat\pi| - |\hat\sigma|}
\]
and
\[
\sum_{\substack{\sigma \in BNC(\chi) \\ \pi \leq \sigma \leq \epsilon}} \mu_{BNC}(\pi, \sigma) = \sum_{\substack{\hat\sigma \in BNC(\hat\chi) \\ \hat\pi \leq \hat\sigma \leq \hat\epsilon}} \mu_{BNC}(\hat\pi, \hat\sigma).
\]
\end{lem}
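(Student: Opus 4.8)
The plan is to follow the template of Lemma~\ref{changingalefttoaright}: build an explicit correspondence induced by interchanging the labels $k$ and $k+1$, verify that it preserves every quantity appearing on the right-hand sides of the two identities, and isolate the behaviour of the shading classes $BNC(\chi,\epsilon)$ as the one genuinely delicate point.

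First I would record the algebraic fact that drives everything. Put $\tau=(k\ k+1)\in S_n$. Because $\chi(k)=\ell$, $\chi(k+1)=r$, and $k,k+1$ are consecutive, $k$ is the largest element of $\chi^{-1}(\ell)$ that is $\le k+1$ and $k+1$ is the smallest element of $\chi^{-1}(r)$ that is $\ge k$; a direct check against the definition of $s_\chi$ then gives $s_{\hat\chi}=\tau\circ s_\chi$. Writing $\hat\pi=\tau\cdot\pi$, $\hat\sigma=\tau\cdot\sigma$, and viewing $\hat\epsilon$ as $\tau$ applied to the set partition underlying $\epsilon$, we obtain $s_{\hat\chi}^{-1}\cdot\hat\pi=s_\chi^{-1}\cdot\pi$ and similarly for $\sigma$. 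Hence $\pi\mapsto\hat\pi$ is an order isomorphism $BNC(\chi)\to BNC(\hat\chi)$ which is the identity under the two identifications with $NC(n)$; in particular it preserves block counts and, by Remarks~\ref{mobiusfunctionremarks}, the bi-non-crossing M\"obius function, $\mu_{BNC}(\pi,\sigma)=\mu_{BNC}(\hat\pi,\hat\sigma)$. Since the $S_n$-action on $\mathcal P(n)$ is order preserving, $\rho\le\epsilon\iff\hat\rho\le\hat\epsilon$. These remarks already dispose of the second identity term by term, reindexing the sum over $\sigma$ by $\sigma\mapsto\hat\sigma$.

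For the first identity I would use the moment formula (\ref{moment_formula}): its left side is exactly the coefficient of $\varphi_\pi(T_1,\dots,T_n)$ in $\varphi(\mu_1(T_1)\cdots\mu_n(T_n))$, and the right side is the analogous coefficient for $\hat\chi,\hat\epsilon$. If $\epsilon_k\ne\epsilon_{k+1}$, then $\mu_k(T_k)=\lambda^{\epsilon_k}(T_k)$ and $\mu_{k+1}(T_{k+1})=\rho^{\epsilon_{k+1}}(T_{k+1})$ act on different free components, hence commute, so $\varphi(\mu_1(T_1)\cdots\mu_n(T_n))$ is unchanged after transposing positions $k,k+1$ and relabelling the operators; moreover no $\epsilon$-monochromatic $\pi$ has a block containing both $k$ and $k+1$, so the word attached to every block is untouched and $\varphi_\pi(T_1,\dots,T_n)=\varphi_{\hat\pi}(\hat T_1,\dots,\hat T_n)$ for every $\pi$ that contributes. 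Comparing coefficients — choosing the $T_j$ so that the $\varphi_\pi$ are linearly independent, as in the proof of Lemma~\ref{annoyingsumfreecase} — gives the equality of coefficients. When $\epsilon_k=\epsilon_{k+1}$ we have $\hat\epsilon=\epsilon$ and the commutation trick is unavailable; here I would argue directly on diagrams, comparing $LR_0(\chi,\epsilon)$ with $LR_0(\hat\chi,\epsilon)$ via the local move that exchanges the adjacent left node $k$ and right node $k+1$ keeping all strings attached, with a short case split on whether $k$ and $k+1$ lie in the same block. In the cases where the move yields a valid shaded diagram, $\sigma\lrgeq\pi\iff\hat\sigma\lrgeq\hat\pi$ follows from Lemma~\ref{lateral_refinement} and the signs match since $|\pi|=|\hat\pi|$, $|\sigma|=|\hat\sigma|$.

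The main obstacle is precisely this $\epsilon_k=\epsilon_{k+1}$ subcase. There the label swap need not carry $BNC(\chi,\epsilon)$ into $BNC(\hat\chi,\epsilon)$: a spine begun by the right node $k+1$ can, once it has risen to the level that (after the move) is occupied by a left node of the same shade, force that left node to join it, whereas in the original diagram that node began its own spine. As $\chi=(\ell,r,r)$ with $\pi=\{\{1\},\{2,3\}\}$ illustrates, the two sums then fail to match term for term, and one must instead pair the ``extra'' $\sigma$'s by a sign-reversing correspondence — equivalently, show both sides vanish whenever the move is obstructed. Establishing that pairing, and checking that the local move is a genuine bijection of shaded diagrams whenever it is unobstructed, is the only part of the argument that is not routine bookkeeping parallel to Lemma~\ref{changingalefttoaright}.
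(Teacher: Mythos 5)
Your reduction of the second identity to the order-isomorphism $\pi\mapsto\hat\pi$ (via $s_{\hat\chi}=(k\ k+1)\circ s_\chi$ and Remark~\ref{mobiusfunctionremarks}) is sound and matches the paper. Your handling of the first identity when $\epsilon_k\ne\epsilon_{k+1}$ is also fine, though you reach the conclusion by commuting $\lambda^{\epsilon_k}$ past $\rho^{\epsilon_{k+1}}$ and comparing coefficients in the moment formula, whereas the paper just checks directly that the label swap is a bijection of $BNC(\chi,\epsilon)$ onto $BNC(\hat\chi,\hat\epsilon)$ preserving $\lrgeq$ and block counts; the diagrammatic check is shorter and avoids invoking an operator model.

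The genuine gap is exactly the step you yourself single out and then leave undone: when $\epsilon_k=\epsilon_{k+1}$ and $k,k+1$ lie in \emph{different} blocks $V_1\ni k$, $V_2\ni k+1$ of $\pi$, you observe that the label swap is no longer a bijection of shaded diagrams and say ``one must instead pair the extra $\sigma$'s by a sign-reversing correspondence --- equivalently, show both sides vanish.'' That last ``equivalently'' is already wrong (the paper does not show either side vanishes; it shows a subsum on each side vanishes and the remainders biject), and, more importantly, the pairing is the content of the lemma in this case, not a remark to be deferred. The paper supplies it concretely: split each side according to whether $k$ and $k+1$ lie in separated blocks of $\sigma$; for the non-separated part, if $V_1$ and $V_2$ are piled then every such $\sigma$ would have to merge $V_1$ and $V_2$, contradicting $\sigma\lrgeq\pi$, so the subsum is empty; if $V_1$ and $V_2$ are not piled then $k=\max V_1$, $k+1=\min V_2$ in diagram order and the involution that splits or joins the block(s) of $k$ and $k+1$ at that point is sign-reversing, so the subsum is zero. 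The swap then restricts to a bijection between the separated parts. Without this construction (and the easy sub-case where $k,k+1$ share a block of $\pi$, which you also leave implicit) the argument is not complete.
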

\begin{proof}
Since the operation that takes an element $\sigma \in BNC(\chi)$ with $\sigma \leq \epsilon$ and produces an element $\hat\sigma \in BNC(\hat\chi)$ with $\hat\sigma \leq \hat\epsilon$ by interchanging $k$ and $k+1$ in $\sigma$ is a bijection, and since $\mu_{BNC}(\pi, \sigma) = \mu_{BNC}(\hat\pi, \hat\sigma)$ by Remarks \ref{mobiusfunctionremarks}, the second equation clearly holds.

To prove the first equation holds, we break the discussion into several cases.
For the first case, suppose $\epsilon_k \neq \epsilon_{k+1}$; that is, the nodes we desired to change the orders of are of different shades.
For example, see the following diagrams where $k=4$.
\begin{align*}
		\begin{tikzpicture}[baseline]
			\draw[thick,dashed] (-.5,-.25) -- (-.5,2.75);
			\draw[thick,dashed] (.5,-.25) -- (.5,2.75);
			\node[left] at (-.5,2.5) {1};
			\draw[red,fill=red] (-.5,2.5) circle (0.05);
			\node[left] at (-.5,2) {2};
			\draw[ggreen,fill=ggreen] (-.5,2) circle (0.05);
			\node[left] at (-.5,1.5) {3};
			\draw[ggreen,fill=ggreen] (-.5,1.5) circle (0.05);
			\node[left] at (-.5,1) {4};
			\draw[red,fill=red] (-.5,1) circle (0.05);
			\node[right] at (.5,.5) {5};
			\draw[ggreen,fill=ggreen] (.5,.5) circle (0.05);
			\node[right] at (.5,0) {6};
			\draw[ggreen,fill=ggreen] (.5,0) circle (0.05);
			\draw[thick, ggreen] (.5, 0) -- (0,0) -- (0,.5) -- (.5, .5);
			\draw[thick, ggreen] (-.5, 2) -- (0,2) -- (0,1.5) -- (-.5, 1.5);
			\draw[thick] (.75, 1.25) -- (1.25, 1.25) -- (1.2, 1.2);
			\draw[thick] (1.25, 1.25) -- (1.2, 1.3);
		\end{tikzpicture}
		\begin{tikzpicture}[baseline]
			\draw[thick,dashed] (-.5,-.25) -- (-.5,2.75);
			\draw[thick,dashed] (.5,-.25) -- (.5,2.75);
			\node[left] at (-.5,2.5) {1};
			\draw[red,fill=red] (-.5,2.5) circle (0.05);
			\node[left] at (-.5,2) {2};
			\draw[ggreen,fill=ggreen] (-.5,2) circle (0.05);
			\node[left] at (-.5,1.5) {3};
			\draw[ggreen,fill=ggreen] (-.5,1.5) circle (0.05);
			\node[left] at (-.5,.5) {5};
			\draw[red,fill=red] (-.5,.5) circle (0.05);
			\node[right] at (.5,1) {4};
			\draw[ggreen,fill=ggreen] (.5,1) circle (0.05);
			\node[right] at (.5,0) {6};
			\draw[ggreen,fill=ggreen] (.5,0) circle (0.05);
			\draw[ggreen,fill=ggreen] (.5,0) circle (0.05);
			\draw[thick, ggreen] (.5, 0) -- (0,0) -- (0,1) -- (.5, 1);
			\draw[thick, ggreen] (-.5, 2) -- (0,2) -- (0,1.5) -- (-.5, 1.5);
		\end{tikzpicture}
\end{align*}
In this case it is clear that the operation that takes $\sigma \in BNC(\chi)$ to $\hat\sigma \in BNC(\hat\chi)$ described above is a bijection that maps $BNC(\chi, \epsilon)$ to $BNC(\hat\chi, \hat\epsilon)$, is such that $(-1)^{|\pi| - |\sigma|} = (-1)^{|\hat\pi| - |\hat\sigma|}$, and is such that $\sigma \lrgeq \pi$ if and only if $\hat\sigma \lrgeq \hat\pi$.
Hence the first equation holds in this case.

Otherwise $\epsilon_k = \epsilon_{k+1}$.
Suppose $k$ and $k+1$ are in the same block of $\pi$.
For example, consider the following diagrams where $k=3$.
\begin{align*}
		\begin{tikzpicture}[baseline]
			\draw[thick,dashed] (-.5,-.25) -- (-.5,2.75);
			\draw[thick,dashed] (.5,-.25) -- (.5,2.75);
			\node[left] at (-.5,2.5) {1};
			\draw[red,fill=red] (-.5,2.5) circle (0.05);
			\node[left] at (-.5,2) {2};
			\draw[ggreen,fill=ggreen] (-.5,2) circle (0.05);
			\node[left] at (-.5,1.5) {3};
			\draw[ggreen,fill=ggreen] (-.5,1.5) circle (0.05);
			\node[left] at (-.5,.5) {5};
			\draw[red,fill=red] (-.5,.5) circle (0.05);
			\node[right] at (.5,1) {4};
			\draw[ggreen,fill=ggreen] (.5,1) circle (0.05);
			\node[right] at (.5,0) {6};
			\draw[ggreen,fill=ggreen] (.5,0) circle (0.05);
			\draw[ggreen,fill=ggreen] (.5,0) circle (0.05);
			\draw[thick, ggreen] (.5, 0) -- (0,0) -- (0,1) -- (.5, 1);
			\draw[thick, ggreen] (-.5, 2) -- (0,2) -- (0,1.5) -- (-.5, 1.5);
			\draw[thick, ggreen] (0,1) -- (0,1.5);
			\draw[thick] (.75, 1.25) -- (1.25, 1.25) -- (1.2, 1.2);
			\draw[thick] (1.25, 1.25) -- (1.2, 1.3);
		\end{tikzpicture}
		\begin{tikzpicture}[baseline]
			\draw[thick,dashed] (-.5,-.25) -- (-.5,2.75);
			\draw[thick,dashed] (.5,-.25) -- (.5,2.75);
			\node[left] at (-.5,2.5) {1};
			\draw[red,fill=red] (-.5,2.5) circle (0.05);
			\node[left] at (-.5,2) {2};
			\draw[ggreen,fill=ggreen] (-.5,2) circle (0.05);
			\node[left] at (-.5,1) {4};
			\draw[ggreen,fill=ggreen] (-.5,1) circle (0.05);
			\node[left] at (-.5,.5) {5};
			\draw[red,fill=red] (-.5,.5) circle (0.05);
			\node[right] at (.5,1.5) {3};
			\draw[ggreen,fill=ggreen] (.5,1.5) circle (0.05);
			\node[right] at (.5,0) {6};
			\draw[ggreen,fill=ggreen] (.5,0) circle (0.05);
			\draw[ggreen,fill=ggreen] (.5,0) circle (0.05);
			\draw[thick, ggreen] (.5, 0) -- (0,0) -- (0,1.5) -- (.5, 1.5);
			\draw[thick, ggreen] (-.5, 2) -- (0,2) -- (0,1) -- (-.5, 1);
			\draw[thick, ggreen] (0,1) -- (0,1.5);
		\end{tikzpicture}
\end{align*}
It is again clear that the same identifications as the previous case hold and thus the first equation holds in this case.
Hence we have reduced to the case that $k$ and $k+1$ are in different blocks of the same shade.

Let $V_1$ and $V_2$ be the blocks in $\pi$ of $k$ and $k+1$ respectively.
Note that $V_1$ contains a left node and $V_2$ contains a right node and the sum on the left-hand-side of the first equation is
\[
\sum_{\substack{\sigma \in BNC(\chi,\epsilon) \\ \sigma \lrgeq \pi \\ k, k+1 \text{ in separated blocks of } \sigma}} (-1)^{|\pi| - |\sigma|} + \sum_{\substack{\sigma \in BNC(\chi,\epsilon) \\ \sigma \lrgeq \pi \\ k, k+1 \text{ not in separated blocks of } \sigma}} (-1)^{|\pi| - |\sigma|}.
\]
We claim that
\[
\sum_{\substack{\sigma \in BNC(\chi,\epsilon) \\ \sigma \lrgeq \pi \\ k, k+1 \text{ not in separated blocks of } \sigma}} (-1)^{|\pi| - |\sigma|} = 0.
\]
Indeed we will split the discussion into two cases: when $V_1$ and $V_2$ are piled and when they are not.
For an example when $V_1$ and $V_2$ are piled, consider the following diagram.
\[
\begin{tikzpicture}[baseline]
			\draw[thick,dashed] (-.5,-.25) -- (-.5,2.75);
			\draw[thick,dashed] (.5,-.25) -- (.5,2.75);
			\node[left] at (-.5,2.5) {1};
			\draw[red,fill=red] (-.5,2.5) circle (0.05);
			\node[left] at (-.5,2) {2};
			\draw[ggreen,fill=ggreen] (-.5,2) circle (0.05);
			\node[left] at (-.5,1) {4};
			\draw[ggreen,fill=ggreen] (-.5,1) circle (0.05);
			\node[left] at (-.5,.5) {5};
			\draw[red,fill=red] (-.5,.5) circle (0.05);
			\node[right] at (.5,1.5) {3};
			\draw[ggreen,fill=ggreen] (.5,1.5) circle (0.05);
			\node[right] at (.5,0) {6};
			\draw[ggreen,fill=ggreen] (.5,0) circle (0.05);
			\draw[ggreen,fill=ggreen] (.5,0) circle (0.05);
			\draw[thick, ggreen] (.5, 0) -- (.125,0) -- (.125,1.5) -- (.5, 1.5);
			\draw[thick, ggreen] (-.5, 2) -- (-0.125,2) -- (-0.125,1) -- (-.5, 1);
			
			\node[left] at (-0.5, 1.5) {$V_1$};
			\node[right] at (.5, .75) {$V_2$};
		\end{tikzpicture}
\]
If $V_1$ and $V_2$ are piled, it is easy to see that any $\sigma \in BNC(\chi,\epsilon)$ such that $\pi \leq \sigma$ and $k$ and $k+1$ are not in separated blocks of $\sigma$ must be such that $V_1$ and $V_2$ are contained in the same block of $\sigma$.
However, this implies that $\pi$ is not a lateral refinement of $\sigma$ as joining piled blocks cannot be undone by a lateral refinement.
Hence the sum is zero in this case.
Otherwise, suppose $V_1$ and $V_2$ are not piled.
For an example where $V_1$ and $V_2$ are not piled, consider the following diagram.
\[
\begin{tikzpicture}[baseline]
			\draw[thick,dashed] (-.5,-.25) -- (-.5,2.75);
			\draw[thick,dashed] (.5,-.25) -- (.5,2.75);
			\node[left] at (-.5,2.5) {1};
			\draw[red,fill=red] (-.5,2.5) circle (0.05);
			\node[left] at (-.5,2) {2};
			\draw[ggreen,fill=ggreen] (-.5,2) circle (0.05);
			\node[left] at (-.5,1.5) {3};
			\draw[ggreen,fill=ggreen] (-.5,1.5) circle (0.05);
			\node[left] at (-.5,.5) {5};
			\draw[red,fill=red] (-.5,.5) circle (0.05);
			\node[right] at (.5,1) {4};
			\draw[ggreen,fill=ggreen] (.5,1) circle (0.05);
			\node[right] at (.5,0) {6};
			\draw[ggreen,fill=ggreen] (.5,0) circle (0.05);
			\draw[ggreen,fill=ggreen] (.5,0) circle (0.05);
			\draw[thick, ggreen] (.5, 0) -- (0,0) -- (0,1) -- (.5, 1);
			\draw[thick, ggreen] (-.5, 2) -- (0,2) -- (0,1.5) -- (-.5, 1.5);
			
			\node[left] at (-0.625, 1.75) {$V_1$};
			\node[right] at (.5, .5) {$V_2$};
		\end{tikzpicture}
\]
This implies $k$ is the lowest element of $V_1$ in the bi-non-crossing diagram of $\pi$ and $k+1$ is the highest element of $V_2$.
If $\sigma \in BNC(\chi, \epsilon)$ is such that $k$ and $k+1$ are not in separated blocks of $\sigma$ and $\sigma \geq \pi$, then if $k$ and $k+1$ are in the same block of $\sigma$, let $\sigma' \lrleq \sigma$ splitting the block containing $k$ and $k+1$ in between these node (note $\sigma' \in BNC(\chi, \epsilon)$).
Otherwise $k$ and $k+1$ are not in the same block of $\sigma$ so letting $\sigma' \lrgeq \sigma$ be the partition made by joining the blocks containing $k$ and $k+1$ together also forms a partition in $BNC(\chi, \epsilon)$.
In either case $(-1)^{|\pi| - |\sigma|} + (-1)^{|\pi| - |\sigma'|} = 0$.
Note that the correspondance between $\sigma$ and $\sigma'$ in each case is one-to-one and thus the sum is zero.

Similar arguments show that
\[
\sum_{\substack{\hat\sigma \in BNC(\hat\chi,\hat\epsilon) \\ \hat\sigma \lrgeq \hat\pi}}(-1)^{|\hat\pi| - |\hat\sigma|} = \sum_{\substack{\hat\sigma \in BNC(\hat\chi,\hat\epsilon) \\ \hat\sigma \lrgeq \hat\pi \\ k, k+1 \text{ in separated blocks of } \hat\sigma}} (-1)^{|\hat\pi| - |\hat\sigma|}.
\]
However, the map taking $\sigma \in BNC(\chi)$ to $\hat\sigma \in BNC(\hat\chi)$ is such that $k$ and $k+1$ are in separated blocks of $\sigma$ if and only if $k$ and $k+1$ are in separated blocks of $\hat\sigma$, and under these conditions $\sigma \in BNC(\chi, \epsilon)$ if and only if $\hat\sigma \in BNC(\hat\chi, \hat\epsilon)$, $\sigma \lrgeq \pi$ if and only if $\hat\sigma \lrgeq \hat\pi$, and $(-1)^{|\pi|-|\sigma|} = (-1)^{|\hat\pi| - |\hat\sigma|}$.
Hence the first equation holds in this final case.
\end{proof}

\begin{proof}[Proof of Proposition \ref{annoyingsumresult}]
Given $\pi$, a $\hat\pi$ in $BNC(\hat\chi)$ where $\hat\chi : \{1, \ldots, n\} \to \{\ell\}$ may be constructed such that $\hat\pi$ can be modified to make $\pi$ via the operations in used in Lemma \ref{changingalefttoaright} and Lemma \ref{changingorderofaleftandright}.
Since the sums are equal for $\hat\pi$ by Lemma \ref{annoyingsumfreecase} and since Lemma \ref{changingalefttoaright} and Lemma \ref{changingorderofaleftandright} preserve the equality of the sums,  the result hold for $\pi$.
\end{proof}

We apply Proposition \ref{annoyingsumresult} to Corollary \ref{bi-free_moment_formula} to immediately obtain the following.
\begin{cor}
\label{something_corollary}
Let $z'$ and $z''$ be a pair of two-faced families in $(\mc{A},\varphi)$. Then $z'$ and $z''$ are bi-free if and only if for every map $\alpha\colon\{1,\ldots, n\}\rightarrow I\sqcup J$ and $\epsilon\in\{',''\}^n$ we have
	\begin{align}\label{bi-free_moment_formula_formula_formula}
		\varphi_\alpha\left(z^\epsilon\right)= \sum_{\pi\in BNC(\alpha)} \left[\sum_{\substack{\sigma\in BNC(\alpha)\\ \pi\leq\sigma\leq \epsilon}} \mu_{BNC}(\pi, \sigma)\right] \varphi_\pi(z^\epsilon),
	\end{align}
where $z^\epsilon = \left(z_{\alpha(1)}^{\epsilon_1},\ldots,z_{\alpha(n)}^{\epsilon_n}\right)$.
\end{cor}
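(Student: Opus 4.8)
The plan is to read off the corollary from Corollary~\ref{bi-free_moment_formula} and Proposition~\ref{annoyingsumresult} by comparing the two moment expansions coefficient by coefficient. By Corollary~\ref{bi-free_moment_formula}, the pair $z'$ and $z''$ is bi-free if and only if (\ref{bi-free_moment_formula_formula}) holds for every $\alpha\colon\{1,\ldots,n\}\to I\sqcup J$ and every $\epsilon\in\{',''\}^n$; thus it suffices to show that, for each such $\alpha$ and $\epsilon$, the right-hand sides of (\ref{bi-free_moment_formula_formula}) and (\ref{bi-free_moment_formula_formula_formula}) are literally the same complex number. Both are sums over $\pi\in BNC(\alpha)$ of a bracketed scalar times $\varphi_\pi(z^\epsilon)$, so writing $\chi=\chi^\alpha$ (so that $BNC(\alpha)=BNC(\chi)$ and $BNC(\alpha,\epsilon)=BNC(\chi,\epsilon)$) it is enough to verify, for every single $\pi\in BNC(\chi)$, that
\[
\sum_{\substack{\sigma\in BNC(\chi,\epsilon)\\ \sigma\lrgeq\pi}}(-1)^{|\pi|-|\sigma|}=\sum_{\substack{\sigma\in BNC(\chi)\\ \pi\leq\sigma\leq\epsilon}}\mu_{BNC}(\pi,\sigma).
\]

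First I would dispose of the case $\pi\not\leq\epsilon$. Every partition in $BNC(\chi,\epsilon)$ has monochromatic blocks --- in the recursive construction of $LR(\chi,\epsilon)$ a node is always isolated, starts a new spine, or joins a spine of its own shade --- so every $\sigma\in BNC(\chi,\epsilon)$ satisfies $\sigma\leq\epsilon$ as a partition of $\{1,\ldots,n\}$. Since $\sigma\lrgeq\pi$ entails $\pi\leq\sigma$, any $\sigma$ contributing to the left-hand sum would give $\pi\leq\sigma\leq\epsilon$, contradicting $\pi\not\leq\epsilon$; hence that sum is empty. The right-hand sum is empty for the same transitivity reason, so both sides are $0$. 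For the remaining case $\pi\leq\epsilon$ the displayed identity is exactly Proposition~\ref{annoyingsumresult}. Combining the two cases, the bracketed coefficients coincide for every $\pi\in BNC(\alpha)$, so the right-hand sides of (\ref{bi-free_moment_formula_formula}) and (\ref{bi-free_moment_formula_formula_formula}) agree, and the corollary follows from Corollary~\ref{bi-free_moment_formula}.

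I do not expect a genuine obstacle here: the substantive combinatorics is already carried out in Proposition~\ref{annoyingsumresult} (and, beneath it, in Lemmas~\ref{annoyingsumfreecase} through~\ref{changingorderofaleftandright}), while the operator-theoretic content is packaged in Corollary~\ref{bi-free_moment_formula}. The only point deserving a moment's care is the $\pi\not\leq\epsilon$ bookkeeping above, where one must use that lateral refinement presupposes refinement and that shaded bi-non-crossing partitions are monochromatic on blocks --- both immediate from the definitions --- so that the vanishing of the empty sums lines up on the two sides.
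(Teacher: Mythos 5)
Your proof is correct and follows the same route as the paper, which simply states that the corollary follows by applying Proposition~\ref{annoyingsumresult} to Corollary~\ref{bi-free_moment_formula}. The bookkeeping you supply for the case $\pi\not\leq\epsilon$ (where both bracketed sums vanish because partitions in $BNC(\chi,\epsilon)$ are monochromatic on blocks and lateral refinement implies refinement) is exactly the small gap the paper leaves implicit, and your treatment of it is sound.
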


%%%%%%%%%%%%%%%%%%%%%%%%%%%%%%%%%%%%%%%%%%%%%%%%%%%%%%%%%%%%%%%%%%%%
\subsection{Bi-free is equivalent to combinatorially-bi-free}

\begin{thm}
\label{main_theorem}
Let $z'=((z_i')_{i\in I}, (z_j')_{j\in J})$ and $z''=((z_i'')_{i\in I}, (z_j'')_{j\in J})$ be a pair of two-faced families in a non-commutative probability space $(\mathcal{A},\varphi)$. Then $z'$ and $z''$ are bi-free if and only if they are combinatorially-bi-free.
\end{thm}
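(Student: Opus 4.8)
The plan is to run Corollary~\ref{something_corollary} through the moment--cumulant machinery of Remark~\ref{moebius_power}: one short M\"obius computation turns its defining identity into a statement purely about mixed bi-free cumulants, after which a routine induction on $n$ finishes the proof.

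\emph{Step 1: rewriting the bi-free moment formula.} In the formula of Corollary~\ref{something_corollary} substitute $\varphi_\pi(z^\epsilon)=\sum_{\tau\in BNC(\alpha),\ \tau\le\pi}\kappa_\tau(z^\epsilon)$ (valid by the moment--cumulant relation and the multiplicativity of $\kappa_z$ recorded in Remark~\ref{moebius_power}), interchange the two sums, and collect the coefficient of $\kappa_\tau(z^\epsilon)$. That coefficient is a sum over the region $\tau\le\pi\le\sigma\le\epsilon$, and summing first over $\pi$ gives
\begin{align*}
\sum_{\substack{\sigma\in BNC(\alpha)\\ \tau\le\sigma\le\epsilon}}\ \sum_{\substack{\pi\in BNC(\alpha)\\ \tau\le\pi\le\sigma}}\mu_{BNC}(\pi,\sigma)
&=\sum_{\substack{\sigma\in BNC(\alpha)\\ \tau\le\sigma\le\epsilon}}(\zeta_{BNC}*\mu_{BNC})(\tau,\sigma)\\
&=\sum_{\substack{\sigma\in BNC(\alpha)\\ \tau\le\sigma\le\epsilon}}\delta_{BNC}(\tau,\sigma),
\end{align*}
which equals $1$ if $\tau\le\epsilon$ and $0$ otherwise. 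Hence the right-hand side of Corollary~\ref{something_corollary} collapses to $\sum_{\tau\in BNC(\alpha),\ \tau\le\epsilon}\kappa_\tau(z^\epsilon)$. Since $\varphi_\alpha(z^\epsilon)=\sum_{\tau\in BNC(\alpha)}\kappa_\tau(z^\epsilon)$ holds unconditionally, Corollary~\ref{something_corollary} becomes the assertion that $z'$ and $z''$ are bi-free if and only if
\[
\sum_{\substack{\tau\in BNC(\alpha)\\ \tau\nleq\epsilon}}\kappa_\tau(z^\epsilon)=0
\]
for every $\alpha\colon\{1,\ldots,n\}\to I\sqcup J$ and $\epsilon\in\{',''\}^n$; here $\tau\nleq\epsilon$ means some block of $\tau$ is not monochromatic for $\epsilon$.

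\emph{Step 2: identifying this with combinatorial-bi-freeness.} It remains to see that the displayed vanishing, for all $\alpha$ and $\epsilon$, is equivalent to the vanishing of each individual summand $\kappa_\tau(z^\epsilon)$ with $\tau\nleq\epsilon$, i.e.\ to $\kappa_\alpha(z^\epsilon)=\kappa_{1_\alpha}(z^\epsilon)=0$ whenever $\epsilon$ is non-constant, which is exactly the definition of combinatorial-bi-freeness. One direction is immediate from multiplicativity of $\kappa_z$: if every mixed cumulant vanishes then any $\tau$ with a non-monochromatic block has $\kappa_\tau(z^\epsilon)=0$ already blockwise, so the sum is zero. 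For the converse I would induct on $n$; the case $n=1$ is vacuous. Assuming the statement for all lengths $<n$, fix $\alpha$ of length $n$ together with a non-constant $\epsilon$. For $\tau\nleq\epsilon$ with $\tau\neq 1_\alpha$, some block $V$ of $\tau$ has $|V|<n$ and $\epsilon|_V$ non-constant; the factor of $\kappa_\tau(z^\epsilon)$ coming from $V$ is the full-block cumulant of the $\epsilon|_V$-shaded subfamily indexed by $\alpha|_V$, which vanishes by the inductive hypothesis, so $\kappa_\tau(z^\epsilon)=0$. Plugging this into the displayed identity leaves only the term $\tau=1_\alpha$, hence $\kappa_{1_\alpha}(z^\epsilon)=0$, as desired.

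\emph{Main obstacle.} The substantive content is already in Proposition~\ref{annoyingsumresult} and Corollary~\ref{something_corollary}, which supply the explicit form of Voiculescu's universal polynomials; what remains is essentially bookkeeping. The only step requiring care is the unwinding of $\kappa_\tau(z^\epsilon)$ over the blocks of $\tau$ into bi-free cumulants of the restricted tuples (indexed by $\alpha|_V$ with shading $\epsilon|_V$), which is precisely what lets the induction on $n$ go through; this is the mild main obstacle.
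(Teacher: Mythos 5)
Your proof is correct and essentially reproduces the paper's argument: both directions hinge on combining Corollary \ref{something_corollary} with the bi-non-crossing moment--cumulant relations of Remark \ref{moebius_power}, reducing to the identity $\varphi_\alpha(z^\epsilon)=\sum_{\sigma\leq\epsilon}\kappa_\sigma(z^\epsilon)$, and then running an induction on $n$ (with multiplicativity of $\kappa$ over blocks) to isolate $\kappa_{1_\alpha}(z^\epsilon)$. Your Step 1 simply makes explicit the $\zeta*\mu=\delta$ collapse that the paper condenses into a single swap of summation order, and your Step 2 articulates the multiplicativity step that the paper uses but states somewhat tersely.
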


\begin{proof}
Suppose $z'$ and $z''$ are bi-free, and fix a shading $\epsilon \in \{', ''\}^n$.
By Corollary \ref{something_corollary}, for $\alpha : \set{1, \ldots, n} \to I\sqcup J$ we have
%\[
%\varphi\left( z_{\alpha(1)}^{\epsilon(1)}\cdots z_{\alpha(n)}^{\epsilon(n)} \right) = \sum_{\substack{\pi \in BNC(\alpha) \\ \pi \leq \epsilon}} \left(   \sum_{\substack{\sigma \in BNC(\alpha, \epsilon) \\ \sigma \lrgeq \pi}} (-1)^{|\pi| - |\sigma|} \right) \varphi_\pi\left(z_{\alpha(1)}^{\epsilon(1)}, \ldots, z_{\alpha(n)}^{\epsilon(n)}\right)
%\]
%Therefore, since the labels given by $\alpha$ do not affect the structure of $BNC(\alpha, \epsilon)$ other than if $\alpha(k)$ is in either $I$, $I$, $J$, or $J$, and since the labels do not affect whether $\sigma \lrgeq \pi$ nor values of $\mu_{BNC}$, Proposition \ref{annoyingsumresult} implies
\[
\varphi_\alpha\left( z^{\epsilon} \right) = \sum_{{\pi \in BNC(\alpha)}} \left(   \sum_{\substack{\sigma \in BNC(\alpha) \\ \pi \leq \sigma \leq \epsilon}} \mu_{BNC}(\pi, \sigma) \right) \varphi_\pi\left(z^\epsilon\right).
\]
Therefore
\[
\varphi_\alpha\left( z^{\epsilon} \right) = \sum_{\substack{\sigma \in BNC(\alpha) \\ \sigma \leq \epsilon}} \kappa_\sigma\left(z^{\epsilon}\right)
\]
by Remark \ref{moebius_power}.
Using the above formula, we will proceed inductively to show that $\kappa_\sigma\left(z^{\epsilon}\right) = 0$ if $\sigma \in BNC(\alpha)$ and $\sigma \nleq \epsilon$.
The base case is where $n = 1$ is immediate.

%For the base case, suppose $n = 2$ and $\epsilon_1 =\ '$ and $\epsilon_2 =\ ''$ (with the other choice of $\epsilon$ following identically).
%Then, for any $\alpha : \{1,2\} \to I \sqcup J$ with $\alpha(1) \in I \sqcup J$ and $\alpha(2) \in I \sqcup J$,
%\[
%\kappa_{0_\alpha}\left(z_{\alpha(1)}^{\epsilon(1)},z_{\alpha(2)}^{\epsilon(2)}\right) + \kappa_{1_\alpha}\left(z_{\alpha(1)}^{\epsilon(1)},z_{\alpha(2)}^{\epsilon(2)}\right) = \varphi\left(z_{\alpha(1)}^{\epsilon(1)}z_{\alpha(2)}^{\epsilon(2)}\right) = \kappa_{0_\alpha}\left(z_{\alpha(1)}^{\epsilon(1)},z_{\alpha(2)}^{\epsilon(2)}\right)
%\]
%as $0_\alpha \leq \epsilon$ whereas $1_\alpha \nleq \epsilon$.
%Thus $R_{1_\alpha}\left(z_{\alpha(1)}^{\epsilon(1)},z_{\alpha(2)}^{\epsilon(2)}\right)  = 0$ so the base case is complete.

For the inductive case, suppose the result holds for any $\beta : \{1, \ldots, k\} \to I \sqcup J$ with $k < n$.
Let $\alpha : \{1,\ldots, n\} \to I \sqcup J$
Suppose $\epsilon$ is not constant (so in particular, $1_\alpha \nleq \epsilon$).
Then
\[
\sum_{\sigma \in BNC(\alpha)} \kappa_\sigma\left( z^{\epsilon} \right) = \varphi_\alpha\left( z^{\epsilon} \right) = \sum_{\substack{\sigma \in BNC(\alpha) \\ \sigma \leq \epsilon}} \kappa_\sigma\left(z^{\epsilon}\right).
\]
By induction, $\kappa_\sigma\left(z^{\epsilon}\right) = 0$ if $\sigma \in BNC(\alpha)\setminus \{1_\alpha\}$ and $\sigma \nleq \epsilon$.
Consequently
\[
\sum_{\sigma \in BNC(\alpha)} \kappa_\sigma\left(z^{\epsilon} \right) = \kappa_{1_\alpha}\left(z^{\epsilon} \right) + \sum_{\substack{\sigma \in BNC(\alpha) \\ \sigma \leq \epsilon}} \kappa_\sigma\left(z^{\epsilon}\right).
\]
Combining these two equations gives $\kappa_{1_\alpha}\left( z^{\epsilon} \right) = 0$ completing the inductive step.

Now suppose $z'$ and $z''$ are combinatorially-bi-free.
Then, for any $\alpha : \{1,\ldots, n\} \to I \sqcup J$ and $\epsilon \in \{', ''\}^n$,
\begin{align*}
\varphi_\alpha\left( z^{\epsilon} \right)
 &= \sum_{\sigma \in BNC(\alpha)} \kappa_\sigma\left( z^{\epsilon} \right)
 =  \sum_{\substack{\sigma \in BNC(\alpha) \\ \sigma \leq \epsilon}} \kappa_\sigma\left(z^{\epsilon}\right) \\
 &=  \sum_{\substack{\sigma \in BNC(\alpha) \\ \sigma \leq \epsilon}} \sum_{\substack{\pi \in BNC(\alpha)\\ \pi \leq \sigma}}\varphi_\pi\left(z^{\epsilon}\right) \mu_{BNC}(\pi, \sigma) \\
 &=  \sum_{{\pi\in BNC(\alpha) }} \left(\sum_{\substack{\sigma \in BNC(\alpha)\\ \pi \leq \sigma \leq \epsilon}} \mu_{BNC}(\pi, \sigma)\right) \varphi_\pi\left(z^{\epsilon}\right).
\end{align*}
Hence Corollary \ref{something_corollary} implies that $z'$ and $z''$ are bi-free.
\end{proof}

%%%%%%%%%%%%%%%%%%%%%%%%%%%%%%%%%%%%%%%%%%%%%%%%%%%%%%%%%%%%%%%%%%%%%%%%%%%%%%%%%%%%%%%%%%%%%%%%%%%%%%%%%
\subsection{Voiculescu's universal bi-free polynomials}

Using the equivalence of bi-free independence and combinatorial-bi-free independence we obtain explicit formulas for several universal polynomials appearing \cite{voiculescu2013freei}.

\begin{prop}\label{moment_cor}
Let $\alpha\colon\{1,\ldots, n\}\rightarrow I\sqcup J$.
For each shading $\epsilon\in\{',''\}^n$ we define a polynomial $P_{\alpha,\epsilon}$ on indeterminates $X'_K$ and $X''_K$ indexed by non-empty subsets $K\subset\{1,\ldots, n\}$ by the formula
	\begin{align*}
		P_{\alpha,\epsilon} := \sum_{\pi\in BNC(\alpha,\epsilon)} \sq{\sum_{\substack{\sigma\in BNC(\alpha)\\ \pi\leq\sigma\leq\epsilon}} \mu_{BNC}(\pi, \sigma)} \prod_{V\in \pi} X_V^{\epsilon(V)}.
	\end{align*}
Then for $z'$ and $z''$ a bi-free pair of two-faced families in $(\mc{A},\varphi)$ we have
	\begin{align*}
		\varphi_\alpha(z^\epsilon)= P_{\alpha,\epsilon}(z',z''),
	\end{align*}
where $P_{\alpha,\epsilon}(z',z'')$ is given by evaluating $P_{\alpha,\epsilon}$ at $X_{\{k_1<\cdots<k_r\}}^{\delta}=\varphi(z_{\alpha(k_1)}^\delta\cdots z_{\alpha(k_r)}^\delta)$, $\delta\in\{',''\}$.

Furthermore, if we define $Q_\alpha$ as the sum of the $P_{\alpha,\epsilon}$ over all possible shadings then
	\begin{align*}
		Q_\alpha = X_{\{1,\ldots, n\}}' + X_{\{1,\ldots, n\}}'' + \sum P_{\alpha,\epsilon},
	\end{align*}
where the summation is over non-constant shadings $\epsilon$, and
	\begin{align*}
		\varphi_\alpha( z' + z'' )=Q_\alpha(z',z''),
	\end{align*}
where $Q_\alpha(z',z'')$ is $Q_\alpha$ evaluated at the same point as the $P_{\alpha,\epsilon}$ above.
\end{prop}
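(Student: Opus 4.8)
The plan is to read off the statement from Corollary~\ref{something_corollary} (equivalently, Corollary~\ref{bi-free_moment_formula} together with Proposition~\ref{annoyingsumresult}) by regrouping the sum so that the mixed moments $\varphi_\pi(z^\epsilon)$ turn into the monomials $\prod_{V\in\pi}X_V^{\epsilon(V)}$, and then to handle $Q_\alpha$ and the additivity statement by short direct computations.

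First fix $\alpha$ and a shading $\epsilon$. By Corollary~\ref{something_corollary},
\[
\varphi_\alpha(z^\epsilon)=\sum_{\pi\in BNC(\alpha)}\left[\sum_{\substack{\sigma\in BNC(\alpha)\\ \pi\leq\sigma\leq\epsilon}}\mu_{BNC}(\pi,\sigma)\right]\varphi_\pi(z^\epsilon).
\]
If $\pi\not\leq\epsilon$ the bracketed coefficient is an empty sum and the term drops out; and if $\pi\leq\epsilon$ then every block $V=\{k_1<\cdots<k_r\}$ of $\pi$ is monochromatic of shade $\epsilon(V)$, so $\varphi_\pi(z^\epsilon)=\prod_{V\in\pi}\varphi(z^{\epsilon(V)}_{\alpha(k_1)}\cdots z^{\epsilon(V)}_{\alpha(k_r)})$, which is exactly $\prod_{V\in\pi}X_V^{\epsilon(V)}$ evaluated at the prescribed point. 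It remains to see that among the surviving $\pi\leq\epsilon$ the coefficient vanishes unless $\pi\in BNC(\alpha,\epsilon)$. For this I would replace the coefficient by the form provided by Proposition~\ref{annoyingsumresult},
\[
\sum_{\substack{\sigma\in BNC(\alpha)\\ \pi\leq\sigma\leq\epsilon}}\mu_{BNC}(\pi,\sigma)=\sum_{\substack{\sigma\in BNC(\alpha,\epsilon)\\ \sigma\lrgeq\pi}}(-1)^{|\pi|-|\sigma|},
\]
and note that if some $\sigma\in BNC(\alpha,\epsilon)$ had $\pi\lrleq\sigma$ then $\pi$ itself would lie in $BNC(\alpha,\epsilon)$: making a horizontal cut between two consecutive ribs of a block in the $LR_0$-diagram realizing a shaded partition again yields an $LR_0$-diagram, so $BNC(\alpha,\epsilon)$ is stable under lateral refinement (compare Lemma~\ref{lateral_refinement}). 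Hence for $\pi\leq\epsilon$ with $\pi\notin BNC(\alpha,\epsilon)$ the set of such $\sigma$ is empty, the coefficient is $0$, and the first displayed identity of the proposition follows.

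To obtain the formula for $Q_\alpha$, evaluate the definition of $P_{\alpha,\epsilon}$ at the two constant shadings. If $\epsilon\equiv{}'$, then as a partition $\epsilon=1_\alpha\in BNC(\alpha)$, so for each $\pi\in BNC(\alpha,\epsilon)$ the bracket is $\sum_{\pi\leq\sigma\leq 1_\alpha}\mu_{BNC}(\pi,\sigma)$, a Möbius sum over the full interval $[\pi,1_\alpha]$, which equals $\delta_{\pi,1_\alpha}$ by Remark~\ref{mobiusfunctionremarks}; since $1_\alpha\in BNC(\alpha,\epsilon)$ and $\prod_{V\in 1_\alpha}X_V'=X'_{\{1,\ldots,n\}}$, this gives $P_{\alpha,\epsilon}=X'_{\{1,\ldots,n\}}$, and symmetrically $P_{\alpha,\epsilon}=X''_{\{1,\ldots,n\}}$ for $\epsilon\equiv{}''$. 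Summing the defining formula over all shadings then yields $Q_\alpha=X'_{\{1,\ldots,n\}}+X''_{\{1,\ldots,n\}}+\sum P_{\alpha,\epsilon}$, the remaining sum being over non-constant $\epsilon$.

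Finally, for the additivity statement, expanding the product gives
\[
\varphi_\alpha(z'+z'')=\varphi\bigl((z'_{\alpha(1)}+z''_{\alpha(1)})\cdots(z'_{\alpha(n)}+z''_{\alpha(n)})\bigr)=\sum_{\epsilon\in\{',''\}^n}\varphi_\alpha(z^\epsilon),
\]
and each summand equals $P_{\alpha,\epsilon}(z',z'')$ by the first part, all evaluated at the common point $X^\delta_{\{k_1<\cdots<k_r\}}=\varphi(z^\delta_{\alpha(k_1)}\cdots z^\delta_{\alpha(k_r)})$; hence the total is $Q_\alpha(z',z'')$. The one genuinely combinatorial point, and the step I expect to be the main obstacle, is the reduction of the index set from $\{\pi\in BNC(\alpha):\pi\leq\epsilon\}$ to $BNC(\alpha,\epsilon)$, i.e.\ the vanishing of the coefficient attached to partitions that refine $\epsilon$ but fail to be lateral refinements of any shaded partition; once that is in hand, everything else is bookkeeping on top of Corollary~\ref{something_corollary}, Proposition~\ref{annoyingsumresult}, and the properties of $\mu_{BNC}$ from Remark~\ref{mobiusfunctionremarks}.
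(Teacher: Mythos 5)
The second and third parts of your argument (the evaluation at constant shadings and the expansion of the product for $Q_\alpha$) match the paper's proof and are fine. The problem is in the first part, where you try to reduce the index set from $\{\pi\in BNC(\alpha):\pi\leq\epsilon\}$ to $BNC(\alpha,\epsilon)$. You claim that $BNC(\alpha,\epsilon)$ is stable under lateral refinement, so that the coefficient $\sum_{\sigma\in BNC(\alpha,\epsilon),\ \sigma\lrgeq\pi}(-1)^{|\pi|-|\sigma|}$ vanishes whenever $\pi\notin BNC(\alpha,\epsilon)$. Both claims are false. For a counterexample take $\chi\equiv\ell$ on $\{1,\ldots,5\}$, $\epsilon=(',' ,'',', '',')$, and $\pi=\{\{1,5\},\{2\},\{3\},\{4\}\}$. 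Here $\sigma:=\{\{1,5\},\{2,4\},\{3\}\}$ lies in $BNC(\alpha,\epsilon)$ (trace the $LR$-construction: $5$ opens a spine, $4$ opens a second to its left, $3$ sees the $4$-spine nearest and of the wrong shade so it stays isolated, $2$ terminates the $4$-spine, $1$ terminates the $5$-spine), and $\pi\lrleq\sigma$ (cut the block $\{2,4\}$). But $\pi\notin BNC(\alpha,\epsilon)$: once node $4$ is isolated, the $5$-spine is the nearest spine seen by node $3$ and has the same shade, so $3$ would be forced to join it. Moreover the coefficient attached to this $\pi$ is
\[
\sum_{\substack{\sigma\in BNC(\alpha)\\ \pi\leq\sigma\leq\epsilon}}\mu_{BNC}(\pi,\sigma)
= \mu(\pi,\pi)+\mu\bigl(\pi,\{\{1,3,5\},\{2\},\{4\}\}\bigr)+\mu\bigl(\pi,\{\{1,5\},\{2,4\},\{3\}\}\bigr)
= 1-1-1=-1\neq 0,
\]
where the only partitions $\sigma\in NC(5)$ with $\pi\leq\sigma\leq\epsilon$ are those three (note $\{\{1,3,5\},\{2,4\}\}$ is crossing). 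So your reduction step is not merely unproved; it is wrong.

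This is not something you can fix with a better combinatorial argument, because the stated equality of polynomials would require precisely that vanishing, and the $\varphi_\pi(z^\epsilon)$ for distinct $\pi\leq\epsilon$ evaluate to distinct monomials in the indeterminates $X^\delta_K$. The resolution is that the index set in the statement of $P_{\alpha,\epsilon}$ should be read as running over all $\pi\in BNC(\alpha)$ (equivalently all $\pi\in BNC(\alpha)$ with $\pi\leq\epsilon$, since the coefficient is an empty sum otherwise), not over $BNC(\alpha,\epsilon)$. With that reading the identity $\varphi_\alpha(z^\epsilon)=P_{\alpha,\epsilon}(z',z'')$ is literally a restatement of Corollary~\ref{something_corollary}, which is what the paper means by ``immediate.'' You should therefore not try to prove a reduction to $BNC(\alpha,\epsilon)$; instead record that the terms $\pi\not\leq\epsilon$ drop out because their bracket is empty, and for $\pi\leq\epsilon$ each block is monochromatic so $\varphi_\pi(z^\epsilon)=\prod_{V\in\pi}X^{\epsilon(V)}_V$ at the prescribed evaluation, exactly as you wrote in your second paragraph before the problematic step.
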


\begin{proof}
The first part of this corollary is immediate from Corollary \ref{something_corollary}. The assertion regarding $Q_\alpha(z',z'')$ is also immediate when expanding the product in the left-hand side. All that remains to show is
	\begin{align*}
		Q_\alpha = X_{\{1,\ldots, n\}}' + X_{\{1,\ldots, n\}}'' + \sum P_{\alpha,\epsilon},
	\end{align*}
which is equivalent to saying $P_{\alpha,\epsilon}= X_{\{1,\ldots, n\}}^\delta$ when $\epsilon$ is the constant shading $\epsilon=(\delta,\ldots, \delta)$, $\delta\in \{',''\}$.
Such a shading induces the full partition $1_\alpha$, and hence
\[
\sum_{\substack{\sigma\in BNC(\alpha)\\ \pi\leq\sigma\leq\epsilon}} \mu_{BNC}(\pi, \sigma)
= \sum_{\substack{\sigma\in BNC(\alpha)\\ \pi\leq\sigma\leq1_\alpha}} \mu_{BNC}(\pi, \sigma)
= \delta_{BNC}(\pi, 1_\alpha).
\]
Then the only term in $P_{\alpha, \epsilon}$ with a non-zero coefficient is the one corresponding to $\pi = 1_\alpha$.
\end{proof}

\begin{prop}
For any $\alpha\colon\set{1,\ldots, n}\to I\sqcup J$, recursively define polynomials $R_\alpha$ on indeterminates $X_K$ indexed by non-empty subsets $K\subseteq\{1,\ldots, n\}$ by the formula
	\begin{align*}
		R_\alpha= \sum_{\pi\in BNC(\alpha)} \mu_{BNC}(\pi, 1_\alpha) \prod_{V\in \pi} X_V
	\end{align*}
If $X_K$ is given degree $|K|$, then $R_\alpha$ is homogeneous with degree $n$.

For $z$ a two-faced family in $(\mc{A},\varphi)$, if $R_{\alpha}(z)$ denotes $R_\alpha$ evaluated at the point $X_{\{k_1<\cdots <k_r\}} =\varphi(z_{\alpha(k_1)}\cdots z_{\alpha(k_r)})$ then $R_\alpha(z)=\kappa_\alpha(z)$. Moreover, if $z'$ and $z''$ are bi-free in $(\mc{A},\varphi)$ then $R_\alpha(z'+z'')=R_\alpha(z')+R_\alpha(z'')$; that is, $R_\alpha$ has the cumulant property.
\end{prop}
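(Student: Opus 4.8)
The plan is to verify the three claims in sequence, each of which is essentially a consequence of the M\"obius-inversion machinery established in Section~\ref{sec:incalg} together with the characterization in Corollary~\ref{something_corollary}. First I would check homogeneity: in the defining sum $R_\alpha=\sum_{\pi\in BNC(\alpha)}\mu_{BNC}(\pi,1_\alpha)\prod_{V\in\pi}X_V$, each term $\prod_{V\in\pi}X_V$ has degree $\sum_{V\in\pi}|V|=n$ under the grading that assigns $X_K$ degree $|K|$, since the blocks of $\pi$ partition $\{1,\ldots,n\}$. Hence every monomial appearing in $R_\alpha$ is homogeneous of degree $n$, proving the second sentence.

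Next I would establish $R_\alpha(z)=\kappa_\alpha(z)$. Evaluating $R_\alpha$ at $X_{\{k_1<\cdots<k_r\}}=\varphi(z_{\alpha(k_1)}\cdots z_{\alpha(k_r)})$ turns $\prod_{V\in\pi}X_V$ into $\varphi_\pi(z)$, so $R_\alpha(z)=\sum_{\pi\in BNC(\alpha)}\varphi_\pi(z)\,\mu_{BNC}(\pi,1_\alpha)$. By Remark~\ref{moebius_power}, $\kappa_{1_\alpha}(z)=\sum_{\sigma\in BNC(\alpha),\,\sigma\leq 1_\alpha}\varphi_\sigma(z)\,\mu_{BNC}(\sigma,1_\alpha)$, and since $\sigma\leq 1_\alpha$ holds for every $\sigma\in BNC(\alpha)$, this is exactly $R_\alpha(z)$; and $\kappa_{1_\alpha}(z)=\kappa_\alpha(z)$ by the same remark. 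This is a purely formal matching of two M\"obius-inverted sums and requires no work beyond unwinding the notation.

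Finally, for the cumulant property I would invoke Theorem~\ref{main_theorem}: if $z'$ and $z''$ are bi-free, they are combinatorially-bi-free, and then the Remark following the definition of combinatorial-bi-freeness in Subsection~\ref{mastnak_nica} gives $\kappa_\alpha(z'+z'')=\kappa_\alpha(z')+\kappa_\alpha(z'')$ directly. Combining this with the identity $R_\alpha(z)=\kappa_\alpha(z)$ just proved yields $R_\alpha(z'+z'')=R_\alpha(z')+R_\alpha(z'')$. Alternatively, one could avoid quoting that Remark and argue from Corollary~\ref{something_corollary} together with the multiplicativity of $\kappa_z$: writing $\varphi_\alpha(z^\epsilon)=\sum_{\sigma\in BNC(\alpha),\,\sigma\leq\epsilon}\kappa_\sigma(z^\epsilon)$ and using that mixed cumulants vanish, one shows $\kappa_\alpha(z'+z'')$ decomposes additively; I would mention this only as a remark. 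The main (and only slightly delicate) obstacle is making sure that the indeterminates $X_K$ in $R_\alpha$ are genuinely being substituted with the \emph{same} family of moment values in the $z'$, $z''$, and $z'+z''$ evaluations, so that additivity at the level of $\kappa_\alpha$ transfers to additivity of the polynomial $R_\alpha$; this is immediate once one notes that $\kappa_\alpha$ depends on $z$ only through the numbers $\varphi(z_{\alpha(k_1)}\cdots z_{\alpha(k_r)})$, which are precisely the values assigned to $X_{\{k_1<\cdots<k_r\}}$.
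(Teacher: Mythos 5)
Your proof is correct and follows essentially the same route as the paper: evaluate the M\"obius-inverted sum to identify $R_\alpha(z)$ with $\kappa_\alpha(z)$ via Remark~\ref{moebius_power}, then cite the cumulant property of $\kappa_\alpha$. You are a bit more explicit than the paper in tracing the cumulant property through Theorem~\ref{main_theorem} (bi-free $\Rightarrow$ combinatorially-bi-free) to the Remark in Subsection~\ref{mastnak_nica}, whereas the paper compresses this into ``simply because $\kappa_\alpha$ does''---but the logical content is the same.
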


\begin{proof}
We see that $R_\alpha(z)$ and $\kappa_\alpha(z)$ are equal by Remark \ref{moebius_power}). Then $R_\alpha$ has the cumulant property simply because $\kappa_\alpha$ does.
\end{proof}

\begin{rem}
The polynomials $P_{\alpha,\epsilon}$, $Q_\alpha$, and $R_\alpha$ are precisely the universal polynomials from Propositions 2.18, 5.6,  and Theorem 5.7, respectively, in \cite{voiculescu2013freei}.
\end{rem}

%%%%%%%%%%%%%%%%%%%%%%%%%%%%%%%%%%%%%%%%%%%%%%%%%%%%%%%%%%%%%%%%%%%%
%	A Multiplicative Bi-Free Convolution					   %
%%%%%%%%%%%%%%%%%%%%%%%%%%%%%%%%%%%%%%%%%%%%%%%%%%%%%%%%%%%%%%%%%%%%

\section{A Multiplicative Bi-Free Convolution}
\label{sec:mult}

%%%%%%%%%%%%%%%%%%%%%%%%%%%%%%%%%%%%%%%%%%%%%%%%%%%%%%%%%%%%%%%%%%%%
\subsection{Kreweras complement on bi-non-crossing partitions}

In \cite{nicaspeicher1997}, the Kreweras complement $K_{NC}$ on the non-crossing partitions was used to simplify the convolution of multiplicative functions.
In particular, we have the following extension to $BNC$.

\begin{defi}
\label{kreweras}
For any $\chi : \{1,\ldots, n\} \to I \sqcup J$ and $\pi \in BNC(\chi)$, the \emph{Kreweras complement} of $\pi$, denoted $K_{BNC}(\pi)$, is the element of $BNC(\chi)$ obtained by applying $s_\chi$ to the Kreweras complement in $NC(n)$ of $s^{-1}_\chi \cdot \pi$; explicitly
\[
K_{BNC}(\pi) = s_\chi \cdot K_{NC}(s^{-1}_\chi \cdot \pi).
\]
\end{defi}

\begin{rem}
Note that $K_{BNC}(\pi)$ may be obtained by taking the diagram corresponding to $\pi$, placing a node beneath each left node and above each right node of $\pi$, and drawing the largest bi-non-crossing diagram on the new nodes.
\end{rem}

\begin{ex}
In the following diagram, if $\pi$ is the bi-non-crossing partition drawn in black, $K_{BNC}(\pi)$ is the bi-non-crossing partition in red.
\[
\begin{tikzpicture}[baseline]
	\node[left] at (-.75, 7.5) {$1$};
	\draw[fill=black] (-.75,7.5) circle (0.05);
	\node[left] at (-.75, 6.5) {$2$};
	\draw[fill=black] (-.75,6.5) circle (0.05);
	\node[right] at (.5, 5.5) {$\overline3$};
	\draw[red, fill=red] (.5,5.5) circle (0.05);
	\node[left] at (-.75, 4.5) {$4$};
	\draw[fill=black] (-.75,4.5) circle (0.05);
	\node[left] at (-.75, 3.5) {$5$};
	\draw[fill=black] (-.75,3.5) circle (0.05);
	\node[left] at (-.75, 2.5) {$6$};
	\draw[fill=black] (-.75,2.5) circle (0.05);
	\node[right] at (.5,1.5) {$\overline7$};
	\draw[red, fill=red] (.5,1.5) circle (0.05);
	\node[right] at (.5,0.5) {$\overline8$};
	\draw[red, fill=red] (.5,0.5) circle (0.05);
	\node[left] at (-.75, 7) {$\overline{1}$};
	\draw[red, fill=red] (-.75,7) circle (0.05);
	\node[left] at (-.75, 6) {$\overline{2}$};
	\draw[red, fill=red] (-.75,6) circle (0.05);
	\node[right] at (.5, 5) {${3}$};
	\draw[fill=black] (.5,5) circle (0.05);
	\node[left] at (-.75, 4) {$\overline{4}$};
	\draw[red, fill=red] (-.75,4) circle (0.05);
	\node[left] at (-.75, 3) {$\overline{5}$};
	\draw[red, fill=red] (-.75,3) circle (0.05);
	\node[left] at (-.75, 2) {$\overline{6}$};
	\draw[red, fill=red] (-.75,2) circle (0.05);
	\node[right] at (.5,1) {${7}$};
	\draw[fill=black] (.5,1) circle (0.05);
	\node[right] at (.5,0) {${8}$};
	\draw[fill=black] (.5,0) circle (0.05);

	\draw[thick] (-0.7,7.5) -- (0,7.5) -- (0, 3.5) -- (-.7, 3.5);
	\draw[thick] (.5,5) -- (0, 5);
	\draw[thick] (-0.7, 6.5) -- (-.5, 6.5) -- (-.5, 4.5) -- (-.7, 4.5);
	\draw[thick] (-.7,2.5) -- (0, 2.5) -- (0, 0) -- (.45, 0);

	\draw[red, thick] (-0.7, 7) -- (-.25, 7) -- (-.25, 4) -- (-.7, 4);
	\draw[red, thick] (-.7,3) -- (.25, 3) -- (.25, .5) -- (.45, .5);
	\draw[red, thick] (.45,1.5) -- (.25, 1.5);
	\end{tikzpicture}
\]
\end{ex}

\begin{rem}
\label{convolutionofmultiplicativefunctions}
Since $K_{NC}$ is an order reversing and $s_\chi$ is order preserving, $K_{BNC}$ is an order reversing bijection. Thus $[\pi, 1_\alpha] \simeq [K_{BNC}(1_\alpha), K_{BNC}(\pi)] = [0_\alpha, K_{BNC}(\pi)]$ for all $\pi \in BNC(\alpha)$.
Hence, if $f,g \in IA(BNC)$ are multiplicative functions, then
\[
(f*g)(0_\alpha, 1_\alpha) = \sum_{\pi \in BNC(\alpha)} f(0_\alpha, \pi) g(0_\alpha, K_{BNC}(\pi)) = (g*f)(0_\alpha, 1_\alpha)
\]
and thus $f*g = g*f$.
\end{rem}

%%%%%%%%%%%%%%%%%%%%%%%%%%%%%%%%%%%%%%%%%%%%%%%%%%%%%%%%%%%%%%%%%%%%%%%%
\subsection{Computing cumulants of a multiplicative bi-free convolution}

Taking inspiration from \cite{nicaspeicher1997}, we use the Kreweras complement to examine the bi-free cumulants of a two-faced family generated by products of a bi-free pair of two-faced families.
\begin{thm}
\label{cumulantsofproductoffaces}
Let $z'=( \{z'_{\ell}\}, \{z'_{r}\})$ and $z''=( \{z''_{\ell}\}, \{z''_{r}\})$ be a bi-free family of  pairs of faces and let $z = (\{z'_{\ell} z''_{\ell}\}, \{z''_{r}z'_{r}\})$.
Then
\[
\kappa_{\chi}(z) = \sum_{\pi \in BNC(\chi)} \kappa_\pi(z') \kappa_{K_{BNC}(\pi)}(z'')
\]
for all $\chi : \{1, \ldots, n\} \to \{\ell, r\}$.
\end{thm}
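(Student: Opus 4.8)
The plan is to follow the template of the Nica--Speicher proof of the multiplicative free convolution formula, with $NC(n)$ replaced by $BNC(\chi)$ throughout and with combinatorial-bi-freeness (Theorem \ref{main_theorem}) used to discard mixed cumulants. First I would unfold the product: fixing $\chi\colon\{1,\ldots,n\}\to\{\ell,r\}$ and substituting $z_\ell = z'_\ell z''_\ell$, $z_r = z''_r z'_r$ into $\varphi_\chi(z)=\varphi(z_{\chi(1)}\cdots z_{\chi(n)})$, we rewrite it as $\varphi(T_1\cdots T_{2n})$ where $T_{2k-1}T_{2k}=z_{\chi(k)}$. This word carries the index map $\chi^{(2)}\colon\{1,\ldots,2n\}\to\{\ell,r\}$ with $\chi^{(2)}(2k-1)=\chi^{(2)}(2k)=\chi(k)$ and a shading $\epsilon^{(2)}\in\{',''\}^{2n}$ in which $T_{2k-1}$ is the $z'$-variable and $T_{2k}$ the $z''$-variable when $\chi(k)=\ell$, and vice versa when $\chi(k)=r$. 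The moment--cumulant formula of Remark \ref{moebius_power} then gives
\[
\varphi_\chi(z)=\sum_{\sigma\in BNC(\chi^{(2)})}\kappa_\sigma(T_1,\ldots,T_{2n}).
\]

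Since $z'$ and $z''$ are combinatorially-bi-free, Theorem \ref{main_theorem} forces $\kappa_\sigma=0$ unless $\sigma\leq\epsilon^{(2)}$; writing such a $\sigma$ as $\sigma'\sqcup\sigma''$ with $\sigma'$ partitioning the $z'$-nodes and $\sigma''$ the $z''$-nodes, multiplicativity of $\kappa$ over blocks (Remark \ref{moebius_power}) yields $\kappa_\sigma=\kappa_{\sigma'}(z')\,\kappa_{\sigma''}(z'')$. The crucial point -- and the step I expect to be the main obstacle -- is to identify exactly which pairs $(\sigma',\sigma'')$ occur. Passing to the $NC(2n)$-picture via $s_{\chi^{(2)}}$, the asymmetric choices $z'_\ell z''_\ell$ on the left and $z''_r z'_r$ on the right are engineered precisely so that (since right nodes get reversed under $s_{\chi^{(2)}}$) every $z'$-node is immediately followed by its partner $z''$-node; consequently the $z'$-nodes carry $\sigma'$ as an element of $BNC(\chi)$ via $s_\chi$, while the $z''$-nodes occupy exactly the complementary positions used to define the Kreweras complement (cf.\ the remark after Definition \ref{kreweras}). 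Invoking the classical non-crossing characterization from \cite{nicaspeicher1997}, one obtains $\sigma'\sqcup\sigma''\in BNC(\chi^{(2)})$ if and only if $\sigma''\leq K_{BNC}(\sigma')$. One also checks that the left/right patterns of the $z'$- and $z''$-nodes both coincide with $\chi$, so that $\sigma',\sigma'',K_{BNC}(\sigma')\in BNC(\chi)$, matching the statement.

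Granting the Kreweras characterization, the sum reorganizes as
\[
\varphi_\chi(z)=\sum_{\sigma'\in BNC(\chi)}\kappa_{\sigma'}(z')\sum_{\substack{\sigma''\in BNC(\chi)\\ \sigma''\leq K_{BNC}(\sigma')}}\kappa_{\sigma''}(z'')=\sum_{\sigma'\in BNC(\chi)}\kappa_{\sigma'}(z')\,\varphi_{K_{BNC}(\sigma')}(z''),
\]
the last equality being the moment--cumulant formula applied inside each block of $K_{BNC}(\sigma')$ together with the interval decomposition of Proposition \ref{breakingintervalsintofullintervals}.

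It remains to convert this moment identity into the claimed cumulant identity using the incidence algebra. Writing $\kappa_{\sigma'}(z')=\kappa_{z'}(0_\chi,\sigma')$ and $\varphi_{K_{BNC}(\sigma')}(z'')=m_{z''}(0_\chi,K_{BNC}(\sigma'))$ and comparing with the Kreweras-reindexed convolution formula of Remark \ref{convolutionofmultiplicativefunctions}, the displayed identity says exactly $\varphi_\chi(z)=(\kappa_{z'}*m_{z''})(0_\chi,1_\chi)$ for every $\chi$; since $m_z$ and $\kappa_{z'}*m_{z''}$ are both multiplicative (a convolution of multiplicative functions being multiplicative, as noted after Proposition \ref{breakingintervalsintofullintervals}), this gives $m_z=\kappa_{z'}*m_{z''}$. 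Now $m_z=\kappa_z*\zeta_{BNC}$ and $m_{z''}=\kappa_{z''}*\zeta_{BNC}$, and $\zeta_{BNC}$ is invertible in $IA(BNC)$, so $\kappa_z=\kappa_{z'}*\kappa_{z''}$. Evaluating at $(0_\chi,1_\chi)$ and applying Remark \ref{convolutionofmultiplicativefunctions} once more yields
\[
\kappa_\chi(z)=(\kappa_{z'}*\kappa_{z''})(0_\chi,1_\chi)=\sum_{\pi\in BNC(\chi)}\kappa_\pi(z')\,\kappa_{K_{BNC}(\pi)}(z''),
\]
which is the assertion. Apart from the Kreweras characterization in the second paragraph, each step is routine bookkeeping with $BNC(\chi)$, the incidence algebra, and the analogous non-crossing facts.
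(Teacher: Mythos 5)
Your proposal is correct and follows essentially the same route as the paper: double the index map and choose the staggered shading so that each pair of a $z'$-node and its partner $z''$-node becomes consecutive in the $NC(2n)$ picture (this is what the asymmetric product $z''_r z'_r$ arranges), invoke vanishing of mixed cumulants to reduce to $\sigma\leq\epsilon^{(2)}$, split $\kappa_\sigma$ over the two shades, apply the Nica--Speicher Kreweras characterization to identify the admissible pairs $(\sigma',\sigma'')$, resum the inner sum into $\varphi_{K_{BNC}(\sigma')}(z'')$, and deduce $m_z=\kappa_{z'}*m_{z''}$ and hence $\kappa_z=\kappa_{z'}*\kappa_{z''}$ by multiplicativity and invertibility of $\zeta_{BNC}$. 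The only cosmetic difference is that you carry out the moment computation before stating the incidence-algebra reductions, whereas the paper declares the reductions at the start; the step you flag as the main obstacle (the Kreweras characterization after passing to $NC(2n)$) is asserted without elaboration in the paper's proof, and your sketch of it is correct.
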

\begin{proof}
Recall from Remark \ref{moebius_power} the definition of the moment and bi-free cumulant functions $m_x$ and $\kappa_x$, and moreover that these are uniquely determined by the moments and cumulants of the family $(x)$, respectively.
Since the bi-free cumulant functions are multiplicative and by the structure of the convolution of multiplicative functions given in Remark \ref{convolutionofmultiplicativefunctions}, it suffices to show $\kappa_z = \kappa_{z'} \ast \kappa_{z''}$.
Using the relations $m_z*\mu_{BNC} = \kappa_z$ and $\kappa_z * \zeta_{BNC} = m_z$, it suffices to show $m_z = \kappa_{z'} \ast m_{z''}$.

Suppose $\chi : \{1,\ldots, n\} \to \{\ell, r\}$.
Let $\beta : \{1,\ldots, 2n\} \to \{\ell, r\}$ be given by $\beta(2k-1) =\beta(2k) =\chi(k)$.
Take $\epsilon \in \{',''\}^{2n}$ so that $\epsilon_{2k-1} =\ '$ and $\epsilon_{2k} =\ ''$ if $k \in \chi^{-1}(\ell)$, and the opposite if $k \in \chi^{-1}(r)$.
Then
\begin{align*}
m_z(0_\chi, 1_\chi) = \varphi_{\chi}(z)
 &= \varphi\left(z^{\epsilon_1}_{\chi(1)}z^{\epsilon_2}_{\chi(1)} \cdots z^{\epsilon_{2n-1}}_{\chi(n)}z^{\epsilon_{2n}}_{\chi(n)}\right)  \\
 &= \varphi\left(z^{\epsilon_1}_{\beta(1)}z^{\epsilon_2}_{\beta(2)} \cdots z^{\epsilon_{2n-1}}_{\beta(2n-1)}z^{\epsilon_{2n}}_{\beta(2n)}\right)  \\
 &= \sum_{\pi \in BNC(\beta, \epsilon)} \kappa_\pi(z^\epsilon) \\
  &= \sum_{\pi_1 \in BNC(\chi)} \kappa_{\pi_1}(z')  \sum_{\substack{\pi_2 \in BNC(\chi)\\ \pi_2 \leq K_{BNC}(\pi_1)}} \kappa_{\pi_2}(z'') \\
 &=  \sum_{\pi_1 \in BNC(\chi)} \kappa_{\pi_1}(z') \varphi_{K_{BNC}(\pi_1)}(z'') \\
 &=  (\kappa_{z'} \ast m_{z''})(0_\chi, 1_\chi)
\end{align*}
Hence, as $m_z$ and $\kappa_{z'} \ast m_{z''}$ are multiplicative functions that agree on full lattices in $BNC$, and consequently on all intervals by bi-multiplicitivity.
%, the result follows by one more application of Remark \ref{moebius_power}.
\end{proof}

\begin{rem}
Note that the above generalizes the formula for the free cumulants of the multiplicative convolution of freely independent random variables in terms of their individual cumulants (\emph{cf}. Section 3.5 of \cite{nicaspeicher1997}).
This seems to suggest that when defining the multiplicative convolution of a bi-free pair of two-faced families one should multiply the right faces as if in the opposite algebra.
\end{rem}

\begin{rem}
Since convolution is abelian on multiplicative functions, we obtain that $(\{z'_{\ell} z''_{\ell}\}, \{z''_{r}z'_{r}\})$ and $(\{z''_{\ell} z'_{\ell}\}, \{z'_{r}z''_{r}\})$ have the same joint distributions.
\end{rem}

%%%%%%%%%%%%%%%%%%%%%%%%%%%%%%%%%%%%%%%%%%%%%%%%%%%%%%%%%%%%%%%%%%%%
%	An Operator Model for Pairs of Faces					   %
%%%%%%%%%%%%%%%%%%%%%%%%%%%%%%%%%%%%%%%%%%%%%%%%%%%%%%%%%%%%%%%%%%%%

\section{An Operator Model for Pairs of Faces}
\label{sec:model}

In this section we will construct an operator model for a two-faced family in a non-commutative probability space.
This model will generalize the operator model usually considered in free probability introduced by Nica in \cite{Nica1996271}.

In Definition 3.2.1 of \cite{Nica1996271}, Nica's operator model is constructed via unbounded operators on a Fock space making use of the left creation and annihilation operators where each product of creation operators is weighted by a free cumulant of the random variables.
The operator model for a pair of faces in a non-commutative probability space will be constructed in Theorem \ref{operatormodelforapairoffaces}, with terms similarly weighted by the corresponding bi-free cumulants.
We use left annihilation operators in the same way as Nica's model, though we must use more complex operators than simply left and right creation, essentially to account for the fact that the order in which variables are annihilated does not correspond to the order in which they were added as strongly as in the free case.
%These special operators act on certain vectors differently but ultimately behave like creation operators where the creation may occur in multiple places on the left, right, or in the middle of a tensor of the Fock space.
Our model reduces to Nica's model when all variables are left (or right) variables.
Moreover, a model using only left and right creation and annihilation operators is unlikely, by discussions in \cite{1312.0269}.

Nica's operator model also gives a direct analogue to the $R$-series of a collection of random variables in a non-commutative probability space.
We introduce an analogous operator $\Theta_z$ in Theorem \ref{operatormodelforapairoffaces} which serves as the $R$-series of the two-faced family $z =\left( \{z_i\}_{i \in I}, \{z_j\}_{j \in J}\right)$.
In particular, if
\[
z' = \left( \{z'_i\}_{i \in I}, \{z'_j\}_{j \in J}\right) \mbox{ and } z'' = \left( \{z''_i\}_{i \in I}, \{z''_j\}_{j \in J}\right)
\]
is a bi-free pair of two-faced families, we can consider the single family
\[
z = \left( \{z'_i + z''_i\}_{i \in I}, \{z'_j + z''_j\}_{j \in J}\right)
\]
and construct the corresponding operator $\Theta_z$.
It will follow that
\[
\Theta_z - I = (\Theta_{z'} - I) + (\Theta_{z''} - I).
\]
Hence the operator $\Theta_z$ from Theorem \ref{operatormodelforapairoffaces} behaves like an $R$-series.

\subsection{Nica's Operator Model}
We will take a moment to recall Nica's operator model from \cite{1312.0269} to demystify our construction.
Given an index set $I$, let $\mc F\paren{\C^I}$ be the Fock space generated by $I$; that is,
$$\mc F\paren{\C^I} := \C\Omega\oplus\paren{\bigoplus_{\substack{k\geq1\\i_1,\ldots,i_k\in I}}\C(e_{i_1}\otimes\cdots\otimes e_{i_k})},$$
where $\set{e_i}_{i\in I}$ is an orthonormal basis of $\C^I$.
Recall that $\Omega$ is called the vacuum vector of $\mc F\paren{\C^I}$, and is thought of as a ``length zero tensor''; that is, for example, $\xi_1\otimes\cdots\otimes\xi_{k} = \Omega$ if $k = 0$.
To simplify notation, we will sometimes join impure tensors of varying lengths together with a tensor product symbol, which should be distributed across sums.
For example, $(\xi_1+\xi_2\otimes \xi_3)\otimes \xi_4 = \xi_1\otimes \xi_4 + \xi_2\otimes \xi_3\otimes \xi_4$, while $\xi\otimes\Omega = \xi = \Omega\otimes\xi$.

Recall next that the left creation operator corresponding to $e_i$, denoted $L_i$, is defined by $L_i(\xi_1\otimes\cdots\otimes\xi_k) = e_i\otimes\xi_1\otimes\cdots\otimes\xi_k$, while its adjoint $L_i^*$ is called the left annihilation operator corresponding to $e_i$.
We take $\omega : \mc L\paren{\mc F\paren{\C^I}} \to \C$ to be the vector state corresponding to $\Omega$, so $\omega(T) := \ang{T\Omega, \Omega}$.

Suppose that $X = \paren{X_i}_{i\in I}$ is a collection of random variables in a non-commutative probability space $(\A, \varphi)$, and consider the (unbounded) operator
$$\Theta_X := I_{\mc F\paren{\C^I}} + \sum_{k\geq1}\sum_{i_1, \ldots, i_k \in I} \kappa\paren{X_{i_1}, \ldots, X_{i_k}}L_{i_k}\cdots L_{i_1},$$
where $\kappa$ is the free cumulant functional.
Next, set
$$Z_i := L_i^*\Theta_X = L_i^* + \sum_{k\geq0}\sum_{i_1, \ldots, i_k \in I} \kappa\paren{X_{i_1}, \ldots, X_{i_k}, X_i}L_{i_k}\cdots L_{i_1}.$$
Then the joint distribution of $(Z_i)_{i\in I}$ with respect to $\omega$ is the same as that of $(X_i)_{i\in I}$ with respect to $\varphi$.
Observe that given $i_1, \ldots, i_n$ there is a bijection between the non-crossing partitions $NC(n)$ and the terms in $Z_{i_1}\cdots Z_{i_n}$ of non-zero trace: one takes the finest partition such that if the annihilation operator of $Z_{i_t}$ cancels a creation operator added by a term from the variable $Z_{i_s}$, then $t$ and $s$ lie in the same block.

\begin{ex}
Consider the product $T_1T_2T_3T_4T_5 = Z_1Z_2Z_3Z_3Z_1$, which contains the term of non-zero trace $\kappa(X_2, X_3)\kappa(X_1, X_3, X_1)L_1^*L_2^*(L_3^*L_3L_2)L_3^*(L_1^*L_1L_3L_1)$.
This corresponds to the non-crossing partition $\set{\set{1,4,5}\set{2,3}}$, as the term $\kappa(X_2, X_3)L_3^*L_3L_2$ was selected from $T_3$, while the surviving $L_2$ was annihilated by $T_2$; this accounts for the block $\set{2, 3}$.
Similarly, $\kappa(X_1, X_3, X_1)L_1^*L_1L_3L_1$ was added by $T_5$, and its remaining pieces were annihilated by $T_4$ and $T_1$, which gives us the block $\set{1, 4, 5}$.

On the other hand, to find the term corresponding to the non-crossing partition $\set{\set{1, 5}, \set{2, 4}, \set{3}}$, we note that $T_5$ must introduce an operator annihilated by $T_1$, $T_4$ must for $T_2$, and $T_3$ annihilate any term it adds.
That is, we have the term
$$L_1^*L_2^*\paren{\kappa(X_3)L_3^*L_3}\paren{\kappa(X_2, X_3)L_3^*L_3L_2}\paren{\kappa(X_1, X_1)L_1^*L_1L_1}.$$
\end{ex}

%We may think of the terms $\kappa\paren{X_{i_1}, \ldots, X_{i_k}, X_i}L_{i_k}\cdots L_{i_1}$ as selecting blocks which may appear in a non-crossing partition, and of $L_i^*$ as choosing which blocks actually appear.
%Choosing terms for each of a sequence of $Z_i$'s corresponds to building a non-crossing diagram from right to left, with weights corresponding to the $Z_i$'s which were selected; selecting a product of creation operators adds a new block to the diagram, while annihilation operators complete that block.
%This diagrammatic intuition of Nica's operators drives our construction of the bi-free version.
%In this setting, though, there is more freedom with the relative ordering of left and right variables, and so we use more complicated operators to capture these dynamics.
%%The tensors obtained by applying these operators to the vacuum vector correspond to requirements on what order and with what operators the remaining blocks must be filled: since the weights of the cumulants are added when the block is initally selected, it is necessary to only allow the term to survive if the correct cumulant was chosen.
%%For example, consider the term
%%$$\kappa(X_2, X_3)\kappa(X_1, X_3, X_1)(L_3^*L_3L_2)L_3^*(L_1^*L_1L_3L_1)\Omega = \kappa(X_2, X_3)\kappa(X_1, X_3, X_1)e_2\otimes e_1$$
%%in $Z_3Z_3Z_1\Omega$.
%%The tensor $e_2\otimes e_1$ ensures that these coefficients can only be seen by $\omega$ if a $Z_2$ and $Z_1$ are used to annihilate it, although other products corresponding to additional blocks may be included which will add additional cumulants.

\subsection{Skeletons corresponding to bi-non-crossing partitions}

The operator model from \cite{Nica1996271} can be thought of as a systematic way of constructing all non-crossing partitions weighted by products of free cumulants.
%The main tool in explaining the operator model for a two-faced family is to understand how one can construct bi-non-crossing partitions sequentially.
Recall that non-crossing partitions may be viewed as bi-non-crossing partitions where all nodes are on the left-hand side.

\begin{defi}
Let $\alpha : \{1,\ldots, n\} \to I \sqcup J$.
For a bi-non-crossing partition $\pi \in BNC(\alpha)$, a \emph{skeleton on $\pi$} is a bi-non-crossing diagram of $\pi$ (as in Subsection \ref{sec:bncp}), labelled by $\alpha$, with a choice of each node being either closed or open subject to the constraint that any node below a closed node is also closed.
\end{defi}
\begin{ex}
If $\alpha$ and $\pi$ are as in Example \ref{ex:paulsneed}, the skeletons corresponding to $\pi$ are the following diagrams.
	\begin{align*}
	\begin{tikzpicture}[baseline]
	\node[left] at (-.5, 2) {$\alpha(1)$};
	\draw (-.5,2) circle (0.05);
	\node[left] at (-.5, 1.5) {$\alpha(2)$};
	\draw (-.5,1.5) circle (0.05);
	\node[right] at (.5, 1) {$\alpha(3)$};
	\draw (.5,1) circle (0.05);
	\node[left] at (-.5, .5) {$\alpha(4)$};
	\draw (-.5,.5) circle (0.05);
	\node[right] at (.5,0) {$\alpha(5)$};
	\draw (.5,0) circle (0.05);
	\draw[thick] (-.45,2) -- (0,2) -- (0, 1) -- (.45,1);
	\draw[thick] (-.45,1.5) -- (-.25,1.5) -- (-.25, 0) -- (.45,0);
	\draw[thick] (-.45,.5) -- (-.25,.5);
	\end{tikzpicture}
	\begin{tikzpicture}[baseline]
	\node[left] at (-.5, 2) {$\alpha(1)$};
	\draw (-.5,2) circle (0.05);
	\node[left] at (-.5, 1.5) {$\alpha(2)$};
	\draw (-.5,1.5) circle (0.05);
	\node[right] at (.5, 1) {$\alpha(3)$};
	\draw (.5,1) circle (0.05);
	\node[left] at (-.5, .5) {$\alpha(4)$};
	\draw (-.5,.5) circle (0.05);
	\node[right] at (.5,0) {$\alpha(5)$};
	\draw[fill=black] (.5,0) circle (0.05);
	\draw[thick] (-.45,2) -- (0,2) -- (0, 1) -- (.45,1);
	\draw[thick] (-.45,1.5) -- (-.25,1.5) -- (-.25, 0) -- (.45,0);
	\draw[thick] (-.45,.5) -- (-.25,.5);
	\end{tikzpicture}
	\begin{tikzpicture}[baseline]
	\node[left] at (-.5, 2) {$\alpha(1)$};
	\draw (-.5,2) circle (0.05);
	\node[left] at (-.5, 1.5) {$\alpha(2)$};
	\draw (-.5,1.5) circle (0.05);
	\node[right] at (.5, 1) {$\alpha(3)$};
	\draw (.5,1) circle (0.05);
	\node[left] at (-.5, .5) {$\alpha(4)$};
	\draw[fill=black] (-.5,.5) circle (0.05);
	\node[right] at (.5,0) {$\alpha(5)$};
	\draw[fill=black] (.5,0) circle (0.05);
	\draw[thick] (-.45,2) -- (0,2) -- (0, 1) -- (.45,1);
	\draw[thick] (-.45,1.5) -- (-.25,1.5) -- (-.25, 0) -- (.45,0);
	\draw[thick] (-.45,.5) -- (-.25,.5);
	\end{tikzpicture}
	\end{align*}
	\begin{align*}
	\begin{tikzpicture}[baseline]
	\node[left] at (-.5, 2) {$\alpha(1)$};
	\draw (-.5,2) circle (0.05);
	\node[left] at (-.5, 1.5) {$\alpha(2)$};
	\draw (-.5,1.5) circle (0.05);
	\node[right] at (.5, 1) {$\alpha(3)$};
	\draw[fill=black] (.5,1) circle (0.05);
	\node[left] at (-.5, .5) {$\alpha(4)$};
	\draw[fill=black] (-.5,.5) circle (0.05);
	\node[right] at (.5,0) {$\alpha(5)$};
	\draw[fill=black] (.5,0) circle (0.05);
	\draw[thick] (-.45,2) -- (0,2) -- (0, 1) -- (.45,1);
	\draw[thick] (-.45,1.5) -- (-.25,1.5) -- (-.25, 0) -- (.45,0);
	\draw[thick] (-.45,.5) -- (-.25,.5);
	\end{tikzpicture}
	\begin{tikzpicture}[baseline]
	\node[left] at (-.5, 2) {$\alpha(1)$};
	\draw (-.5,2) circle (0.05);
	\node[left] at (-.5, 1.5) {$\alpha(2)$};
	\draw[fill=black] (-.5,1.5) circle (0.05);
	\node[right] at (.5, 1) {$\alpha(3)$};
	\draw[fill=black] (.5,1) circle (0.05);
	\node[left] at (-.5, .5) {$\alpha(4)$};
	\draw[fill=black] (-.5,.5) circle (0.05);
	\node[right] at (.5,0) {$\alpha(5)$};
	\draw[fill=black] (.5,0) circle (0.05);
	\draw[thick] (-.45,2) -- (0,2) -- (0, 1) -- (.45,1);
	\draw[thick] (-.45,1.5) -- (-.25,1.5) -- (-.25, 0) -- (.45,0);
	\draw[thick] (-.45,.5) -- (-.25,.5);
	\end{tikzpicture}
	\begin{tikzpicture}[baseline]
	\node[left] at (-.5, 2) {$\alpha(1)$};
	\draw[fill=black] (-.5,2) circle (0.05);
	\node[left] at (-.5, 1.5) {$\alpha(2)$};
	\draw[fill=black] (-.5,1.5) circle (0.05);
	\node[right] at (.5, 1) {$\alpha(3)$};
	\draw[fill=black] (.5,1) circle (0.05);
	\node[left] at (-.5, .5) {$\alpha(4)$};
	\draw[fill=black] (-.5,.5) circle (0.05);
	\node[right] at (.5,0) {$\alpha(5)$};
	\draw[fill=black] (.5,0) circle (0.05);
	\draw[thick] (-.45,2) -- (0,2) -- (0, 1) -- (.45,1);
	\draw[thick] (-.45,1.5) -- (-.25,1.5) -- (-.25, 0) -- (.45,0);
	\draw[thick] (-.45,.5) -- (-.25,.5);
	\end{tikzpicture}
	\end{align*}
\end{ex}

\begin{defi}
We will refer to a skeleton where all nodes are closed circles as the \emph{completed skeleton}.
For a skeleton on $1_\alpha \in BNC(\alpha)$, the skeleton where all nodes are open will be referred to as the \emph{empty skeleton} corresponding to $\alpha$, while the skeleton where all but the bottom node is open will be referred to as the \emph{starter skeleton} corresponding to $\alpha$.
Any skeleton that is not empty will be referred to as a \emph{partially completed skeleton}.
\end{defi}

\begin{rem}
We will examine Nica's model in the language of skeletons, which we will think of as a bi-free situation where all variables come from the left face.
Let $\set{X_i}_{i\in I}$ be a family of non-commutative random variables, $\set{Z_i}_{i\in I}$ the corresponding operator model, and fix some $i_1, \ldots, i_n \in I$ and consider a product $(L_{i_1}^*T_1) \cdots (L_{i_n}^*T_n)$ where
$$T_k \in \set{I}\cup\set{\kappa\paren{X_{i_1'}, \ldots, X_{i_m'}}L_{i_m'}\cdots L_{i_1'} | m \geq 1, i'_1, \ldots, i'_m \in I}.$$
%$$T_k \in \set{L^*_{i_k}}\cup\set{\kappa\paren{X_{i_1'}, \ldots, X_{i_m'}, X_{i_k}}L_{i_m'}\cdots L_{i_1'} | m \geq 0, i'_1, \ldots, i'_m \in I}.$$
Note that $L_{i_k}^*T_k = 0$ unless the $T_k$ chosen is either $I$ or begins with $L_{i_k}$.
For $t \leq n$, we think of $(L_{i_t}^*T_t)\cdots (L_{i_n}^*T_n)\Omega$ as a partially completed skeleton weighted by a scalar which is a product of free cumulants.
There is not a bijection between partially completed skeletons and basis vectors of our Fock space as the partially completed skeleton will retain the information of how the vector was created.
Each annihilation operator acts on the skeleton by filling in the lowest open node if it is labelled appropriately (to make the node closed in the new skeleton), and otherwise weights the skeleton by zero (which removes the skeleton from consideration).
Note, then, that the closed nodes correspond to variables in the block which have been encountered, and the requirement that they be filled from bottom to top ensures that the ordering of variables matches the cumulant.
%(The closed nodes will correspond to variables in the block which have already been encountered, and the requirement ensures that we can only add variables in the order appropriate to the selected cumulant.)
For example,
\[
	\begin{tikzpicture}[baseline]
	\node[left] at (-.5, 2) {2};
	\draw (-.5,2) circle (0.05);
	\node[left] at (-.5, 1.5) {1};
	\draw (-.5,1.5) circle (0.05);
	\node[left] at (-.5, 1) {3};
	\draw (-.5,1) circle (0.05);
	\node[left] at (-.5, .5) {2};
	\draw[fill=black] (-.5,.5) circle (0.05);
	\node[left] at (-.5,0) {1};
	\draw[fill=black] (-.5,0) circle (0.05);
	\draw[thick] (-.45,2) -- (0,2) -- (0, 0) -- (-.45,0);
	\draw[thick] (-.45,1.5) -- (0,1.5);
	\draw[thick] (-.45,1) -- (-.25,1) -- (-.25, 0.5) -- (-.45,.5);
	\node[above] at (.5, 1) {$L_2^*$};
	\draw[thick] (.25, 1) -- (.75, 1) -- (.7, .95);
	\draw[thick] (.75, 1) -- (.7, 1.05);
	\end{tikzpicture}
	\begin{tikzpicture}[baseline]
	\node[left] at (-.5, 1) {0};
	\end{tikzpicture}
	\qquad
	\begin{tikzpicture}[baseline]
	\node[left] at (-.5, 1) {whereas};
	\end{tikzpicture}
	\qquad
	\begin{tikzpicture}[baseline]
	\node[left] at (-.5, 2) {2};
	\draw (-.5,2) circle (0.05);
	\node[left] at (-.5, 1.5) {1};
	\draw (-.5,1.5) circle (0.05);
	\node[left] at (-.5, 1) {3};
	\draw (-.5,1) circle (0.05);
	\node[left] at (-.5, .5) {2};
	\draw[fill=black] (-.5,.5) circle (0.05);
	\node[left] at (-.5,0) {1};
	\draw[fill=black] (-.5,0) circle (0.05);
	\draw[thick] (-.45,2) -- (0,2) -- (0, 0) -- (-.45,0);
	\draw[thick] (-.45,1.5) -- (0,1.5);
	\draw[thick] (-.45,1) -- (-.25,1) -- (-.25, 0.5) -- (-.45,.5);
	\node[above] at (.5, 1) {$L_3^*$};
	\draw[thick] (.25, 1) -- (.75, 1) -- (.7, .95);
	\draw[thick] (.75, 1) -- (.7, 1.05);
	\end{tikzpicture}
	\begin{tikzpicture}[baseline]
	\node[left] at (-.5, 2) {2};
	\draw (-.5,2) circle (0.05);
	\node[left] at (-.5, 1.5) {1};
	\draw (-.5,1.5) circle (0.05);
	\node[left] at (-.5, 1) {3};
	\draw[fill=black] (-.5,1) circle (0.05);
	\node[left] at (-.5, .5) {2};
	\draw[fill=black] (-.5,.5) circle (0.05);
	\node[left] at (-.5,0) {1};
	\draw[fill=black] (-.5,0) circle (0.05);
	\draw[thick] (-.45,2) -- (0,2) -- (0, 0) -- (-.45,0);
	\draw[thick] (-.45,1.5) -- (0,1.5);
	\draw[thick] (-.45,1) -- (-.25,1) -- (-.25, 0.5) -- (-.45,.5);
	
	\node[below] at (.25,1) {.};
	\end{tikzpicture}
\]

Each product of creation operators $\kappa\paren{X_{i_1'}, \ldots, X_{i_m'}}L_{i_m'}\cdots L_{i_1'}$ adds an empty skeleton (corresponding to the creation operators chosen) to the skeleton under consideration directly above the highest closed node, and is weighted by the appropriate cumulant.
The lowest node of the new block is immediately filled by the following $L_{i_k}^*$.
For example, %$L_1^*L_1 L_{2}L_{1}L_{3}$ acts on the diagram on the left to give the diagram on the right.
\begin{equation*}
\begin{tikzpicture}[baseline]
	\node[left] at (-.5, 4) {2};
	\draw (-.5,4) circle (0.05);
	\node[left] at (-.5, 3.5) {1};
	\draw (-.5,3.5) circle (0.05);
	\node[left] at (-.5, 3) {3};
	\draw (-.5,3) circle (0.05);
	\node[left] at (-.5, .5) {2};
	\draw[fill=black] (-.5,.5) circle (0.05);	
	\node[left] at (-.5, 0) {1};
	\draw[fill=black] (-.5,0) circle (0.05);
	\draw[thick] (-.45,4) -- (0.25,4) -- (0.25, 0) -- (-.45,0);
	\draw[thick] (-.45,3.5) -- (0.25,3.5);
	\draw[thick] (-.45,3) -- (0,3) -- (0, 0.5) -- (-.45,.5);
	\node[above] at (1.5, 2) {$L_1L_2L_1L_3$};
	\draw[thick] (.5, 2) -- (2.5, 2) -- (2.45, 1.95);
	\draw[thick] (2.5, 2) -- (2.45, 2.05);
	\end{tikzpicture}	
	\begin{tikzpicture}[baseline]
	\node[left] at (-.5, 4) {2};
	\draw (-.5,4) circle (0.05);
	\node[left] at (-.5, 3.5) {1};
	\draw (-.5,3.5) circle (0.05);
	\node[left] at (-.5, 3) {3};
	\draw (-.5,3) circle (0.05);
	\node[left] at (-.5, 2.5) {3};
	\draw (-.5,2.5) circle (0.05);
	\node[left] at (-.5, 2) {1};
	\draw (-.5,2) circle (0.05);
	\node[left] at (-.5, 1.5) {2};
	\draw (-.5,1.5) circle (0.05);
	\node[left] at (-.5, 1) {1};
	\draw (-.5,1) circle (0.05);
	\node[left] at (-.5, .5) {2};
	\draw[fill=black] (-.5,.5) circle (0.05);	
	\node[left] at (-.5, 0) {1};
	\draw[fill=black] (-.5,0) circle (0.05);
	\draw[thick] (-.45,4) -- (0.25,4) -- (0.25, 0) -- (-.45,0);
	\draw[thick] (-.45,3.5) -- (0.25,3.5);
	\draw[thick] (-.45,3) -- (0,3) -- (0, 0.5) -- (-.45,.5);
	\draw[thick] (-.45,2.5) -- (-.25,2.5) -- (-.25, 1) -- (-.45,1);
	\draw[thick] (-.45,2) -- (-.25,2);
	\draw[thick] (-.45,1.5) -- (-.25,1.5);
	\node[above] at (1.5, 2) {$L_1^*$};
	\draw[thick] (.5, 2) -- (2.5, 2) -- (2.45, 1.95);
	\draw[thick] (2.5, 2) -- (2.45, 2.05);
	\end{tikzpicture}
	\begin{tikzpicture}[baseline]
	\node[left] at (-.5, 4) {2};
	\draw (-.5,4) circle (0.05);
	\node[left] at (-.5, 3.5) {1};
	\draw (-.5,3.5) circle (0.05);
	\node[left] at (-.5, 3) {3};
	\draw (-.5,3) circle (0.05);
	\node[left] at (-.5, 2.5) {3};
	\draw (-.5,2.5) circle (0.05);
	\node[left] at (-.5, 2) {1};
	\draw (-.5,2) circle (0.05);
	\node[left] at (-.5, 1.5) {2};
	\draw (-.5,1.5) circle (0.05);
	\node[left] at (-.5, 1) {1};
	\draw[fill=black] (-.5,1) circle (0.05);
	\node[left] at (-.5, .5) {2};
	\draw[fill=black] (-.5,.5) circle (0.05);	
	\node[left] at (-.5, 0) {1};
	\draw[fill=black] (-.5,0) circle (0.05);
	\draw[thick] (-.45,4) -- (0.25,4) -- (0.25, 0) -- (-.45,0);
	\draw[thick] (-.45,3.5) -- (0.25,3.5);
	\draw[thick] (-.45,3) -- (0,3) -- (0, 0.5) -- (-.45,.5);
	\draw[thick] (-.45,2.5) -- (-.25,2.5) -- (-.25, 1) -- (-.45,1);
	\draw[thick] (-.45,2) -- (-.25,2);
	\draw[thick] (-.45,1.5) -- (-.25,1.5);
	\end{tikzpicture}
\end{equation*}

For a product $(L_{i_1}^*T_1) \cdots (L_{i_n}^*T_n)\Omega$, we will get precisely one partially completed skeleton.
For example,
\begin{align*}
\begin{tikzpicture}[baseline]
\node[left] at (-.5, 4.5) {1};
	\draw (-.5,4.5) circle (0.05);	
	\node[left] at (-.5, 4) {1};
	\draw (-.5,4) circle (0.05);
	\node[left] at (-.5, 3.5) {3};
	\draw (-.5,3.5) circle (0.05);
	\node[left] at (-.5, 3) {2};
	\draw (-.5,3) circle (0.05);
	\node[left] at (-.5, 2.5) {2};
	\draw[fill=black] (-.5,2.5) circle (0.05);
	\node[left] at (-.5, 2) {1};
	\draw[fill=black] (-.5,2) circle (0.05);
	\node[left] at (-.5, 1.5) {3};
	\draw[fill=black] (-.5,1.5) circle (0.05);
	\node[left] at (-.5, 1) {1};
	\draw[fill=black] (-.5,1) circle (0.05);
	\node[left] at (-.5, .5) {2};
	\draw[fill=black] (-.5,.5) circle (0.05);	
	\node[left] at (-.5, 0) {1};
	\draw[fill=black] (-.5,0) circle (0.05);
	\draw[thick] (-.45,4.5) -- (0.25,4.5) -- (0.25, 0) -- (-.45,0);
	\draw[thick] (-.45,4) -- (0.25,4);
	\draw[thick] (-.45,3.5) -- (0,3.5) -- (0, 0.5) -- (-.45,.5);
		\draw[thick] (-.45,2) -- (0,2);
	\draw[thick] (-.45,3) -- (-.25,3) -- (-.25, 2.5) -- (-.45,2.5);
	\draw[thick] (-.45,1.5) -- (-.25,1.5) -- (-.25, 1) -- (-.45,1);	
	\end{tikzpicture}
\end{align*}
corresponds to the product
$$\paren{\kappa(X_2, X_2)L_2}L_1^*L_3^*\paren{\kappa(X_3, X_1)L_1^*L_1L_3}\paren{\kappa(X_3, X_1, X_2)L_2^*L_2L_1L_3}\paren{\kappa(X_1, X_1, X_1)L_1^*L_1L_1L_1}\Omega.$$
Notice that when the above operators are applied to $\Omega$ in the order listed, we obtain the vector
\[
\lambda e_2 \otimes e_3 \otimes e_1 \otimes e_1,
\]
where $\lambda$ is a product of cumulants.
The indices of the tensor can be seen in the partially completed skeleton by reading the open nodes from bottom to top.
In this manner, vectors $(L_{i_1}^*T_1) \cdots (L_{i_n}^*T_n)\Omega$ correspond to partially completed skeletons and the only products such that
\[
\ang{ (L_{i_1}^*T_1) \cdots (L_{i_n}^*T_n)\Omega, \Omega} \neq 0
\]
arise from completed skeletons.
It is easy to see that a completed skeleton corresponds to an element of $\pi \in NC(n)$.
These completed skeletons are weighted by the correct product of cumulants so that when we sum over all completed skeletons, we get
\[
\ang{Z_{i_1}\cdots Z_{i_n}\Omega, \Omega} = \sum_{\pi \in NC(n)} \kappa_\pi(X_{i_1}, \ldots, X_{i_n}) = \varphi(X_{i_1} \cdots X_{i_n}),
\]
as desired.
\end{rem}

\subsection{A construction}

%We will now begin the construction of our operator model for pairs of faces.
%The idea is to define operators on a Fock space so that when a product of such operators is taken, vectors corresponding to partially completed skeletons of bi-non-crossing partitions are obtained.
%The main difficulty and difference from Nica's operator model is that bi-non-crossing partitions may have nodes on both the left and right, which provides both restrictions and options for how a starter skeleton may be added to a partially completed skeleton.
We will now construct our operator model for pairs of faces, motivated by our realization of Nica's operator model.
Above, the model constructed all weighted non-crossing partitions by using creation operators to glue in full non-crossing blocks and annihilation operators to approve or reject non-crossing diagrams.
As the combinatorics of pairs of faces is dictated by bi-non-crossing partitions, we must construct the appropriate creation operators to glue together bi-non-crossing partitions.
However, unlike with non-crossing partitions where there is only one way to glue in a full block at any given point, there may be multiple or no ways to glue one bi-non-crossing skeleton into another.
As such, the description of the appropriate creation operators is more complicated.

\label{constructingtheoperatormodel}
Let $z = ((z_i)_{i \in I}, (z_j)_{j \in J})$ be a two-faced family in $(\mc A, \varphi)$.
As before, consider the Fock space $\mathcal{H} := \mathcal{F}\left(\mathbb{C}^{I\sqcup J}\right)$ with $\{e_k\}_{k \in I \sqcup J}$ an orthonormal basis.

For $\alpha : \{1,\ldots, n\} \to I \sqcup J$, we will define operators $T_\alpha \in \mathcal{L}(\mathcal{H})$ which should be thought of as playing the same roll as the operators $T_k$ in our discussion of Nica's model; that is, each adds an appropriate empty skeleton.
Though we will often speak of actions of these operators in terms of their actions on skeletons, one can return to the context of $\mc H$ by letting a partially completed skeleton correspond to the vector formed by taking the tensor product of the basis elements matching the labels of its open nodes, from bottom to top, and weighting it based on which cumulants have been chosen.
For example, the skeleton
\[
\begin{tikzpicture}[baseline]
	\node[left] at (-.5, 2) {$i_1$};
	\draw (-.5,2) circle (0.05);
	\node[left] at (-.5, 1.5) {$i_2$};
	\draw (-.5,1.5) circle (0.05);
	\node[right] at (.5, 1) {$j_1$};
	\draw (.5,1) circle (0.05);
	\node[right] at (.5, .5) {$j_2$};
	\draw[fill=black] (.5,.5) circle (0.05);
	\node[left] at (-.5,0) {$i_3$};
	\draw[fill=black] (-.5,0) circle (0.05);
	\draw[thick] (-.45,2) -- (0,2) -- (0, 0.5) -- (.45,0.5);
	\draw[thick] (-.45,1.5) -- (-.25,1.5) -- (-.25, 0) -- (-.45,0);
\end{tikzpicture}
\]
%corresponds to the vector $\kappa(z_{i_2}, z_{i_3})\kappa(z_{i_1}, z_{j_2})\kappa(z_{j_1})e_{j_1}\otimes e_{i_2}\otimes e_{i_1}$.
corresponds to the vector $e_{j_1}\otimes e_{i_2} \otimes e_{i_1}$ and will be weighted by $\kappa(z_{i_2}, z_{i_3})\kappa(z_{i_1}, z_{j_2})\kappa(z_{j_1})$.
%The key point here is that the vector in the Fock space corresponds to a requirement on the order in which we apply random variables if we are to have a contribution from its term, ordered from left to right in the order they must appear with no consideration to whether they are left or right variables.
%The key point here is that this vector determines a later choice and order of annihilation operators, and consequently, the random variables in which they appear.
The key point here is that the only choices of future $Z_k$ which yield a non-zero $\Omega$ component when applied to such a vector have annihilation operators in the correct order.
In the above example, in order for this skeleton to make a contribution to the final term, we must act on it by $Z_{j_1}$, $Z_{i_2}$, and $Z_{i_1}$ in that order (though other variables may occur between them).
Since the closed nodes of the skeleton only effect the resulting quantity in terms of its weight and cannot affect the action of future operators (as indeed they must not, for the vector has forgotten them) we will sometimes truncate diagrams of skeletons to show only the open nodes.
It is implied that there may be significantly more nodes and blocks below the bottom of the diagrams that follow, but their representation is eschewed.
Likewise, in order to ensure that $T_\alpha$ is well-defined, we cannot have behaviour depending on which partial skeletons have been chosen, but only the choice of side and of labels of the open nodes.

For $n = 1$, we define $T_\alpha :=  L_{\alpha(1)}$.
In this setting, one may think of $T_\alpha$ as adding an empty skeleton in the lowest possible position with a single open node on the left or on the right depending on whether $\alpha(1)$ is in $I$ or $J$.
For example,
\begin{align*}
\begin{tikzpicture}[baseline]
	\node[left] at (-.5, 2) {$i_1$};
	\draw (-.5,2) circle (0.05);
	\node[left] at (-.5, 1.5) {$i_2$};
	\draw (-.5,1.5) circle (0.05);
	\node[right] at (.5, .5) {$j_1$};
	\draw[fill=black] (.5,.5) circle (0.05);
	\node[left] at (-.5,0) {$i_3$};
	\draw[fill=black] (-.5,0) circle (0.05);
	\draw[thick] (-.45,2) -- (0,2) -- (0, 0.5) -- (.45,0.5);
	\draw[thick] (-.45,1.5) -- (-.25,1.5) -- (-.25, 0) -- (-.45,0);
	\node[above] at (1.75, 1) {$T_\alpha$};
	\node[below] at (1.75, 1) {$\alpha(1) \in I$};
	\draw[thick] (1, 1) -- (2.5, 1) -- (2.45,0.95);
	\draw[thick] (2.5, 1) -- (2.45,1.05);
\end{tikzpicture}
\begin{tikzpicture}[baseline]
	\node[left] at (-.5, 2) {$i_1$};
	\draw (-.5,2) circle (0.05);
	\node[left] at (-.5, 1.5) {$i_2$};
	\draw (-.5,1.5) circle (0.05);
	\node[left] at (-.5, 1) {$\alpha(1)$};
	\draw (-.5,1) circle (0.05);
	\node[right] at (.5, .5) {$j_1$};
	\draw[fill=black] (.5,.5) circle (0.05);
	\node[left] at (-.5,0) {$i_3$};
	\draw[fill=black] (-.5,0) circle (0.05);
	\draw[thick] (-.45,2) -- (0,2) -- (0, 0.5) -- (.45,0.5);
	\draw[thick] (-.45,1.5) -- (-.25,1.5) -- (-.25, 0) -- (-.45,0);
\end{tikzpicture}\qquad\text{and}\qquad
\begin{tikzpicture}[baseline]
	\node[left] at (-.5, 2) {$i_1$};
	\draw (-.5,2) circle (0.05);
	\node[left] at (-.5, 1.5) {$i_2$};
	\draw (-.5,1.5) circle (0.05);
	\node[right] at (.5, .5) {$j_1$};
	\draw[fill=black] (.5,.5) circle (0.05);
	\node[left] at (-.5,0) {$i_3$};
	\draw[fill=black] (-.5,0) circle (0.05);
	\draw[thick] (-.45,2) -- (0,2) -- (0, 0.5) -- (.45,0.5);
	\draw[thick] (-.45,1.5) -- (-.25,1.5) -- (-.25, 0) -- (-.45,0);
	\node[above] at (1.75, 1) {$T_\alpha$};
	\node[below] at (1.75, 1) {$\alpha(1) \in J$};
	\draw[thick] (1, 1) -- (2.5, 1) -- (2.45,0.95);
	\draw[thick] (2.5, 1) -- (2.45,1.05);
\end{tikzpicture}
\begin{tikzpicture}[baseline]
	\node[left] at (-.5, 2) {$i_1$};
	\draw (-.5,2) circle (0.05);
	\node[left] at (-.5, 1.5) {$i_2$};
	\draw (-.5,1.5) circle (0.05);
	\node[right] at (.5, 1) {$\alpha(1)$};
	\draw (.5,1) circle (0.05);
	\node[right] at (.5, .5) {$j_1$};
	\draw[fill=black] (.5,.5) circle (0.05);
	\node[left] at (-.5,0) {$i_3$};
	\draw[fill=black] (-.5,0) circle (0.05);
	\draw[thick] (-.45,2) -- (0,2) -- (0, 0.5) -- (.45,0.5);
	\draw[thick] (-.45,1.5) -- (-.25,1.5) -- (-.25, 0) -- (-.45,0);
\end{tikzpicture}.
%\\T_\alpha(e_{i_2}\otimes e_{i_3}) = e_{\alpha(1)}\otimes e_{i_2} \otimes e_{i_1}
\end{align*}
Observe that $T_\alpha$ adds an open node in the lowest valid location (i.e., immediately above all closed nodes); this behaviour will be mimicked by the other $T_\alpha$ as well.
That is, the lowest open node added will always be added directly above the highest closed node.

Let $\Sigma : \mathcal{H} \oplus \mathcal{H} \to \mathcal{H}$ be defined by
\[
\Sigma\left(f_1\otimes\cdots\otimes f_n, f_{n+1} \otimes \cdots \otimes f_{n+m}\right) := \sum_\sigma f_{\sigma(1)}\otimes\cdots\otimes f_{\sigma(n+m)},
\]
where the sum is over all permutations $\sigma \in S_{n+m}$ so that $\sigma|_{[1, n]}$ and $\sigma|_{[n+1, n+m]}$ are increasing; that is, $\sigma$ interleaves the sets $\{1,\ldots, n\}$ and $\{n+1,\ldots, n+m\}$.
Note that $\Sigma(\xi, \Omega) = \xi = \Sigma(\Omega, \xi)$.
As an example,
\begin{align*}
\Sigma(e_{1}\otimes e_{2}, e_{3}\otimes e_4) =& \  e_1 \otimes e_2 \otimes e_3 \otimes e_4 + e_1 \otimes e_3 \otimes e_2 \otimes e_4 + e_3 \otimes e_1 \otimes e_2 \otimes e_4   \\
 & + e_1 \otimes e_3 \otimes e_4 \otimes e_2 + e_3 \otimes e_1 \otimes e_4\otimes e_2 + e_3 \otimes e_4 \otimes e_1 \otimes e_2.
\end{align*}
We will use $\Sigma$ to account for the fact that nodes on the right may be added with any order to nodes on the left to obtain a valid skeleton.

For $\alpha : \{1,\ldots, n\} \to I \sqcup J$ we define
\[
T_{\alpha}(\Omega) := L_{\alpha(n)} L_{\alpha(n-1)} \cdots L_{\alpha(1)}(\Omega) = e_{\alpha(1)}\otimes\cdots\otimes e_{\alpha(n)}.
\]
Note that this corresponds to taking a completed skeleton (possibly with no nodes), and adding the empty skeleton corresponding to $\alpha$ above it.

We will now define $T_{\alpha}$ for $n\geq 2$ on a tensors of basis elements, and extend by linearity to their span (which is dense $\mc H$).
We consider only the case $\alpha(n) \in I$, as the case when $\alpha(n) \in J$ will be similar.
Let $\eta = e_{\beta(m)}\otimes\cdots\otimes e_{\beta(1)} \in \mc H$, where $\beta : \set{1, \ldots, m} \to I \sqcup J$.

%Let $k$ be the largest element of \i{1, \ldots, n\}$ such that $\alpha(k) \in J$ (or $k = 0$ if $\alpha$ maps into $I$.
If $\beta^{-1}(I) = \emptyset$, we define $T_\alpha(\eta)$ as follows.
Let $k = \max\paren{\set0\cup \alpha^{-1}(J)}$, so that $k$ is the lowest of the nodes to be added by which falls on the right.
We define $T_\alpha(\eta)$ as follows:
\[
T_\alpha(\eta) := e_{\alpha(n)} \otimes \Sigma\paren{ e_{\alpha(n - 1)} \otimes \cdots \otimes e_{\alpha(k+1)}, e_{\beta(m)} \otimes \cdots \otimes e_{\beta(1)} } \otimes e_{\alpha(k)} \otimes \cdots \otimes e_{\alpha(1)}.
\]
This is mimicking the action of adding a new skeleton to the existing skeleton.
%In order to ensure that no crossings are introduced, all nodes on the right of the old skeleton must be placed before any new nodes can be added to the right, since these new nodes must be connected to the first node on the left; before that, however, the nodes added on the left may be added with any relative order to the existing nodes on the right.
In order to ensure that no crossings are introduced, all new nodes on the right must be placed above all existing nodes on the right; before any new right nodes are added, though, nodes on the left can be added freely.
One should think of this as the sum of all valid partially completed skeletons where the old skeleton is below and to the right of starter skeleton corresponding to $\alpha$, with the node corresponding to $\alpha(n)$ in the lowest possible position.

\begin{ex}
If $\alpha : \{1,2,3,4\} \to I \sqcup J$ satisfies $\alpha^{-1}(I) = \{1,3,4\}$ and $\alpha^{-1}(J) = \{2\}$, and $j \in J$, then
\[
T_\alpha(e_{j}) = e_{\alpha(4)}\otimes e_{\alpha(3)} \otimes e_{j} \otimes e_{\alpha(2)} \otimes e_{\alpha(1)} +e_{\alpha(4)}\otimes e_{j} \otimes e_{\alpha(3)} \otimes  e_{\alpha(2)} \otimes e_{\alpha(1)}.
\]
This action corresponds to the following diagram:
\[
\begin{tikzpicture}[baseline]
	\node[right] at (.5, 1.25) {$j$};
	\draw (.5,1.25) circle (0.05);
	%\node[right] at (.5, .5) {$j_1$};
	%\draw[fill=black] (.5,.5) circle (0.05);
	%\node[left] at (-.5,0) {$i_1$};
	%\draw[fill=black] (-.5,0) circle (0.05);
	\draw[thick] (.45,1.25) -- (0,1.25) -- (0, .5);
	\draw[thick,dashed] (0,.5) -- (0, 0);
	%\draw[thick] (.45, .5) -- (0, .5);
	\node[above] at (1.675, 1.25) {$T_\alpha$};
	\draw[thick] (1.25, 1.25) -- (2, 1.25) -- (1.95, 1.2);
	\draw[thick] (2, 1.25) -- (1.95, 1.3);
	\end{tikzpicture}	
	\begin{tikzpicture}[baseline]
	\node[left] at (-.5, 2.5) {$\alpha(1)$};
	\draw (-.5,2.5) circle (0.05);
	\node[right] at (.5, 2) {$\alpha(2)$};
	\draw (.5,2) circle (0.05);	
	\node[right] at (.5, 1.5) {$j$};
	\draw (.5,1.5) circle (0.05);
	\node[left] at (-.5, 1) {$\alpha(3)$};
	\draw (-.5,1) circle (0.05);
	\node[left] at (-.5, .5) {$\alpha(4)$};
	\draw (-.5,.5) circle (0.05);
	%\node[right] at (.5, .5) {$j_1$};
	%\draw[fill=black] (.5,.5) circle (0.05);
	%\node[left] at (-.5,0) {$i_1$};
	%\draw[fill=black] (-.5,0) circle (0.05);
	\draw[thick] (-.45,2.5) -- (-.25,2.5) -- (-.25, .5) -- (-.45,.5);
	\draw[thick] (-.45, 1) -- (-.25, 1);
	\draw[thick] (.45, 2) -- (-.25, 2);
	\draw[thick] (.45,1.5) -- (0,1.5) -- (0, 0.5);
	\draw[thick,dashed] (0,.5) -- (0, 0);
	\node[above] at (1.5, 1.25) {$+$};
\end{tikzpicture}
\begin{tikzpicture}[baseline]
	\node[left] at (-.5, 2.5) {$\alpha(1)$};
	\draw (-.5,2.5) circle (0.05);
	\node[right] at (.5, 2) {$\alpha(2)$};
	\draw (.5,2) circle (0.05);	
	\node[left] at (-.5, 1.5) {$\alpha(3)$};
	\draw (-.5,1.5) circle (0.05);
	\node[right] at (.5, 1) {$j$};
	\draw (.5,1) circle (0.05);
	\node[left] at (-.5, 0.5) {$\alpha(4)$};
	\draw (-.5,.5) circle (0.05);
	%\node[right] at (.5, .5) {$j_1$};
	%\draw[fill=black] (.5,.5) circle (0.05);
	%\node[left] at (-.5,0) {$i_1$};
	%\draw[fill=black] (-.5,0) circle (0.05);
	\draw[thick] (-.45,2.5) -- (-.25,2.5) -- (-.25, .5) -- (-.45,.5);
	\draw[thick] (-.45, 1.5) -- (-.25, 1.5);
	\draw[thick] (.45, 2) -- (-.25, 2);
	\draw[thick] (.45,1) -- (0,1) -- (0, 0.5);
	\draw[thick,dashed] (0,.5) -- (0, 0);
	%\draw[thick] (.45, .5) -- (0, .5);
\end{tikzpicture}
\]
%It may appear problematic that multiple diagrams appear, but the diagrams that survive in the end will depend on whether the annihilations at the appropriate stages are from $I$ (left nodes) or from $J$ (right nodes).
%The reason that multiple diagrams appear corresponds to the fact that the variables may be encountered in either order while still producing a valid bi-non-crossing diagram.
%Multiple diagrams appear to allow the annihilation operators for $j$ and $\alpha(3)$ to act in either order, since both orders produce valid bi-non-crossing diagrams, weighted by the same cumulants.
%This is because the cumulant will occur in the expansion of products with either ordering of the variables which close the nodes labelled by $\alpha(3)$ and $j$.
The purpose of allowing multiple diagrams is that the cumulant corresponding to a bi-non-crossing diagram for a sequence of operators is equal to the same cumulant for the sequence of operators obtained by interchanging the $k$-th and $(k+1)$-th operators and the $k$-th and $(k+1)$-th nodes in the bi-non-crossing diagram provided $k$ and $k+1$ are in different blocks and on different sides of the diagram.
%Thus multiple skeletons must be created simultaneously in order to facilitate all of the possible bi-non-crossing diagrams obtained by interchanging consecutive nodes on opposite sides of the diagrams from different blocks.
In the end, a sequence of annihilation operators can complete at most one skeleton and will produce the correct completed skeleton for a given sequence of operators.

As a further example, suppose $\alpha : \{1,2,3\} \to I$ and $j_1, j_2 \in J$.
Then
\begin{align*}
T_\alpha(e_{j_2}\otimes e_{j_1})
=&\ e_{\alpha(3)} \otimes e_{\alpha(2)} \otimes e_{\alpha(1)} \otimes e_{j_2} \otimes e_{j_1}
+ e_{\alpha(3)} \otimes e_{\alpha(2)} \otimes e_{j_2} \otimes e_{\alpha(1)} \otimes e_{j_1}\\
& + e_{\alpha(3)} \otimes e_{\alpha(2)} \otimes e_{j_2} \otimes e_{j_1} \otimes e_{\alpha(1)}
+ e_{\alpha(3)} \otimes e_{j_2} \otimes e_{\alpha(2)} \otimes e_{\alpha(1)} \otimes e_{j_1} \\
& + e_{\alpha(3)} \otimes e_{j_2} \otimes e_{\alpha(2)} \otimes e_{j_1} \otimes e_{\alpha(1)}
+ e_{\alpha(3)} \otimes e_{j_2} \otimes e_{j_1} \otimes e_{\alpha(2)} \otimes e_{\alpha(1)}
\end{align*}
This action corresponds to the following diagram:
\begin{align*}
\begin{tikzpicture}[baseline]
	\draw[thick,dashed] (0, 0) -- (0,.5);
	\draw[thick] (.5,1.5) -- (0,1.5) -- (0,1) -- (.5,1);
	\draw[thick] (0,.5) -- (0,1);
	\draw[fill=white] (.5,1) circle (0.05);
	\node[right] at (.5,1) {$j_2$};
	\draw[fill=white] (.5,1.5) circle (0.05);
	\node[right] at (.5,1.5) {$j_1$};
	\node[above] at (1.675, 1.25) {$T_\alpha$};
	\draw[thick] (1.25, 1.25) -- (2, 1.25) -- (1.95, 1.2);
	\draw[thick] (2, 1.25) -- (1.95, 1.3);
\end{tikzpicture}&\ 
\begin{tikzpicture}[baseline]
	\draw[thick,dashed] (0, 0) -- (0,.5);
	\draw[thick] (.5,2.5) -- (0,2.5) -- (0,3) -- (.5,3);
	\draw[thick] (-.5,2) -- (-.25,2) -- (-.25,1) -- (-.5,1);
	\draw[thick] (-.5,1.5) -- (-.25,1.5);
	\draw[thick] (0,.5) -- (0,3);
	\draw[fill=white] (-.5,1) circle (0.05);
	\node[left] at (-.5,1) {$\alpha(3)$};
	\draw[fill=white] (-.5,1.5) circle (0.05);
	\node[left] at (-.5,1.5) {$\alpha(2)$};
	\draw[fill=white] (-.5,2) circle (0.05);
	\node[left] at (-.5,2) {$\alpha(1)$};
	\draw[fill=white] (.5,2.5) circle (0.05);
	\node[right] at (.5,2.5) {$j_2$};
	\draw[fill=white] (.5,3) circle (0.05);
	\node[right] at (.5,3) {$j_1$};
	\node[above] at (1.5, 1.25) {$+$};
\end{tikzpicture}
\begin{tikzpicture}[baseline]
	\draw[thick,dashed] (0, 0) -- (0,.5);
	\draw[thick] (.5,2) -- (0,2) -- (0,3) -- (.5,3);
	\draw[thick] (-.5,2.5) -- (-.25,2.5) -- (-.25,1) -- (-.5,1);
	\draw[thick] (-.5,1.5) -- (-.25,1.5);
	\draw[thick] (0,.5) -- (0,3);
	\draw[fill=white] (-.5,1) circle (0.05);
	\node[left] at (-.5,1) {$\alpha(3)$};
	\draw[fill=white] (-.5,1.5) circle (0.05);
	\node[left] at (-.5,1.5) {$\alpha(2)$};
	\draw[fill=white] (-.5,2.5) circle (0.05);
	\node[left] at (-.5,2.5) {$\alpha(1)$};
	\draw[fill=white] (.5,2) circle (0.05);
	\node[right] at (.5,2) {$j_2$};
	\draw[fill=white] (.5,3) circle (0.05);
	\node[right] at (.5,3) {$j_1$};
	\node[above] at (1.5, 1.25) {$+$};
\end{tikzpicture}
\begin{tikzpicture}[baseline]
	\draw[thick,dashed] (0, 0) -- (0,.5);
	\draw[thick] (.5,2) -- (0,2) -- (0,2.5) -- (.5,2.5);
	\draw[thick] (-.5,3) -- (-.25,3) -- (-.25,1) -- (-.5,1);
	\draw[thick] (-.5,1.5) -- (-.25,1.5);
	\draw[thick] (0,.5) -- (0,2.5);
	\draw[fill=white] (-.5,1) circle (0.05);
	\node[left] at (-.5,1) {$\alpha(3)$};
	\draw[fill=white] (-.5,1.5) circle (0.05);
	\node[left] at (-.5,1.5) {$\alpha(2)$};
	\draw[fill=white] (-.5,3) circle (0.05);
	\node[left] at (-.5,3) {$\alpha(1)$};
	\draw[fill=white] (.5,2) circle (0.05);
	\node[right] at (.5,2) {$j_2$};
	\draw[fill=white] (.5,2.5) circle (0.05);
	\node[right] at (.5,2.5) {$j_1$};
\end{tikzpicture}\\ &
\begin{tikzpicture}[baseline]
	\draw[thick,dashed] (0, 0) -- (0,.5);
	\draw[thick] (.5,1.5) -- (0,1.5) -- (0,3) -- (.5,3);
	\draw[thick] (-.5,2.5) -- (-.25,2.5) -- (-.25,1) -- (-.5,1);
	\draw[thick] (-.5,2) -- (-.25,2);
	\draw[thick] (0,.5) -- (0,3);
	\draw[fill=white] (-.5,1) circle (0.05);
	\node[left] at (-.5,1) {$\alpha(3)$};
	\draw[fill=white] (-.5,2) circle (0.05);
	\node[left] at (-.5,2) {$\alpha(2)$};
	\draw[fill=white] (-.5,2.5) circle (0.05);
	\node[left] at (-.5,2.5) {$\alpha(1)$};
	\draw[fill=white] (.5,1.5) circle (0.05);
	\node[right] at (.5,1.5) {$j_2$};
	\draw[fill=white] (.5,3) circle (0.05);
	\node[right] at (.5,3) {$j_1$};
	\node[above] at (1.5, 1.25) {$+$};
	\node[above] at (-1.5, 1.25) {$+$};
\end{tikzpicture}
\begin{tikzpicture}[baseline]
	\draw[thick,dashed] (0, 0) -- (0,.5);
	\draw[thick] (.5,1.5) -- (0,1.5) -- (0,2.5) -- (.5,2.5);
	\draw[thick] (-.5,3) -- (-.25,3) -- (-.25,1) -- (-.5,1);
	\draw[thick] (-.5,2) -- (-.25,2);
	\draw[thick] (0,.5) -- (0,2.5);
	\draw[fill=white] (-.5,1) circle (0.05);
	\node[left] at (-.5,1) {$\alpha(3)$};
	\draw[fill=white] (-.5,2) circle (0.05);
	\node[left] at (-.5,2) {$\alpha(2)$};
	\draw[fill=white] (-.5,3) circle (0.05);
	\node[left] at (-.5,3) {$\alpha(1)$};
	\draw[fill=white] (.5,1.5) circle (0.05);
	\node[right] at (.5,1.5) {$j_2$};
	\draw[fill=white] (.5,2.5) circle (0.05);
	\node[right] at (.5,2.5) {$j_1$};
	\node[above] at (1.5, 1.25) {$+$};
\end{tikzpicture}
\begin{tikzpicture}[baseline]
	\draw[thick,dashed] (0, 0) -- (0,.5);
	\draw[thick] (.5,1.5) -- (0,1.5) -- (0,2) -- (.5,2);
	\draw[thick] (-.5,3) -- (-.25,3) -- (-.25,1) -- (-.5,1);
	\draw[thick] (-.5,2.5) -- (-.25,2.5);
	\draw[thick] (0,.5) -- (0,2);
	\draw[fill=white] (-.5,1) circle (0.05);
	\node[left] at (-.5,1) {$\alpha(3)$};
	\draw[fill=white] (-.5,2.5) circle (0.05);
	\node[left] at (-.5,2.5) {$\alpha(2)$};
	\draw[fill=white] (-.5,3) circle (0.05);
	\node[left] at (-.5,3) {$\alpha(1)$};
	\draw[fill=white] (.5,1.5) circle (0.05);
	\node[right] at (.5,1.5) {$j_2$};
	\draw[fill=white] (.5,2) circle (0.05);
	\node[right] at (.5,2) {$j_1$};
\end{tikzpicture}
.
\end{align*}
\end{ex}

Now, suppose that $\beta^{-1}(I) \neq \emptyset$, and let $k = \max\paren{\beta^{-1}(I)}$.
This corresponds to a partially completed skeleton with open nodes on both the left and right, where the lowest open node on the left is the $k^\mathrm{th}$ from the top.
We set $T_{\alpha}(\eta) = 0$ if $\alpha(t) \in J$ for some $t$, since the partially completed skeleton has open nodes on the left and right we cannot add the empty skeleton of $\alpha$ without introducing a crossing, since the lowest node of $\alpha$ is on the left.
Otherwise $\alpha(t) \in I$ for all $t \in \set{1,\ldots, n}$, and we set
\[T_{\alpha}(\eta) := e_{\alpha(n)}\otimes \Sigma\left( e_{\alpha(n-1)} \otimes \cdots \otimes e_{\alpha(1)}, e_{\beta(m)} \otimes \cdots \otimes e_{\beta(k+1)} \right) \otimes e_{\beta(k)} \otimes \cdots \otimes e_{\beta(1)}.
\]
One can think of this as the sum of all valid partially completed skeletons where the empty skeleton of $\alpha$ sits below the lowest open node on the left of the old skeleton.

\begin{ex}
If $\alpha : \set{1, 2} \to I \sqcup J$ has $\alpha(2) \in I$, $\alpha(1) \in J$ and $i \in I$, then
$$T_\alpha(e_i) = 0.$$
This is because there is no way to glue the empty skeleton corresponding to $\alpha$ into the partially completed skeleton without introducing a crossing while placing the lowest node of $\alpha$ at the bottom of the diagram (directly above the highest closed node):
\[
\begin{tikzpicture}[baseline]
	\draw[thick,dashed] (0, 0) -- (0,.5);
	\draw[thick] (-.5,2) -- (0,2) -- (0,.5);
	\draw[thick] (-.5,1) -- (-.25, 1) -- (-.25,1.5) -- (-.1, 1.5);
	\draw[thick] (.1,1.5) -- (.5, 1.5);
	\draw[fill=white] (-.5,1) circle (0.05);
	\node[left] at (-.5,1) {$\alpha(2)$};
	\draw[fill=white] (-.5,2) circle (0.05);
	\node[left] at (-.5,2) {$i$};
	\draw[fill=white] (.5,1.5) circle (0.05);
	\node[right] at (.5,1.5) {$\alpha(1)$};
\end{tikzpicture}
.
\]

If $\alpha : \{1,2\} \to I$ and $i \in I$, $j, j' \in J$ then
\[
T_\alpha \left(e_j\otimes e_i\otimes e_{j'}\right) = e_{\alpha(2)} \otimes e_{\alpha(1)} \otimes e_{j}\otimes e_{i} \otimes e_{j'} + e_{\alpha(2)} \otimes e_{j}\otimes e_{\alpha(1)} \otimes  e_{i} \otimes e_{j'}.
\]
This action corresponds to the following diagram:
\[
\begin{tikzpicture}[baseline]
	\node[right] at (.5, 2.25) {$j'$};
	\draw (.5,2.25) circle (0.05);
	\node[left] at (-.5, 1.75) {$i$};
	\draw (-.5,1.75) circle (0.05);	
	\node[right] at (.5, 1.25) {$j$};
	\draw (.5,1.25) circle (0.05);
	%\node[right] at (.5, .5) {$j_1$};
	%\draw[fill=black] (.5,.5) circle (0.05);
	%\node[left] at (-.5,0) {$i_1$};
	%\draw[fill=black] (-.5,0) circle (0.05);
	\draw[thick] (.45,2.25) -- (0,2.25) -- (0, .5);
	\draw[thick] (-.45, 1.75) -- (0, 1.75);
	\draw[thick] (.45, 1.25) -- (0, 1.25);
	\draw[thick,dashed] (0, .5) -- (0, 0);
	\node[above] at (1.675, 1.25) {$T_\alpha$};
	\draw[thick] (1.25, 1.25) -- (2, 1.25) -- (1.95, 1.2);
	\draw[thick] (2, 1.25) -- (1.95, 1.3);
\end{tikzpicture}
\begin{tikzpicture}[baseline]
	\node[right] at (.5, 3.25) {$j'$};
	\draw (.5,3.25) circle (0.05);
	\node[left] at (-.5, 2.75) {$i$};
	\draw (-.5,2.75) circle (0.05);	
	\node[right] at (.5, 2.25) {$j$};
	\draw (.5,2.25) circle (0.05);
	\node[left] at (-.5, 1.75) {$\alpha(1)$};
	\draw (-.5,1.75) circle (0.05);
	\node[left] at (-.5, 1.25) {$\alpha(2)$};
	\draw (-.5,1.25) circle (0.05);
	%\node[right] at (.5, .5) {$j_1$};
	%\draw[fill=black] (.5,.5) circle (0.05);
	%\node[left] at (-.5,0) {$i_1$};
	%\draw[fill=black] (-.5,0) circle (0.05);
	\draw[thick] (.45,3.25) -- (0,3.25) -- (0, .5);
	\draw[thick] (-.45, 2.75) -- (0, 2.75);
	\draw[thick] (.45, 2.25) -- (0, 2.25);
	\draw[thick] (-.45, 1.75) -- (-.25, 1.75) -- (-.25, 1.25) -- (-.45, 1.25);
	%\draw[thick] (-.45,1.25) -- (-.25,1.25);
	\draw[thick,dashed] (0, .5) -- (0, 0);
	\node[above] at (1.5, 1.25) {$+$};
\end{tikzpicture}
\begin{tikzpicture}[baseline]
	\node[right] at (.5, 3.25) {$j'$};
	\draw (.5,3.25) circle (0.05);
	\node[left] at (-.5, 2.75) {$i$};
	\draw (-.5,2.75) circle (0.05);	
	\node[left] at (-.5, 2.25) {$\alpha(1)$};
	\draw (-.5,2.25) circle (0.05);
	\node[right] at (.5, 1.75) {$j$};
	\draw (.5,1.75) circle (0.05);
	\node[left] at (-.5, 1.25) {$\alpha(2)$};
	\draw (-.5,1.25) circle (0.05);
	%\node[right] at (.5, .5) {$j_1$};
	%\draw[fill=black] (.5,.5) circle (0.05);
	%\node[left] at (-.5,0) {$i_1$};
	%\draw[fill=black] (-.5,0) circle (0.05);
	\draw[thick] (.45,3.25) -- (0,3.25) -- (0, .5);
	\draw[thick] (-.45, 2.75) -- (0, 2.75);
	\draw[thick] (.45, 1.75) -- (0, 1.75);
	\draw[thick,dashed] (0, .5) -- (0, 0);
	\draw[thick] (-.45,2.25) -- (-.25,2.25) -- (-.25, 1.25) -- (-.45,1.25);
	\end{tikzpicture}
.
\]
\end{ex}

As $T_{\alpha}$ has been defined on an orthonormal basis, we may extend by linearity to obtain a densely defined operator on $\mathcal{H}$; note that $T_\alpha$ may not be bounded due to the action of $\Sigma$.
On the other hand, if $\alpha : \{1, \ldots, n\} \to I$ then $T_{\alpha}$ acts on the Fock subspace generated by $\set{e_i}_{i\in I}$ as $ L_{\alpha(n)} \cdots L_{\alpha(1)}$.
Thus, if one considers only left variables, the resulting operators are precisely those of Nica's model.

We define $T_\alpha$ in a similar manner when $\alpha(n) \in J$.

%%%%%%%%%%%%%%%%%%%%%%%%%%%%%%%%%%%%%%%%%%%%%%%%%%%%%%%%%%%%%%%%%%
\subsection{The operator model for pairs of faces}
With the above construction, the operator model for a pair of faces is at hand.

\begin{thm}
\label{operatormodelforapairoffaces}
Let $z = \left( \{z_i\}_{i \in I}, \{z_j\}_{j \in J}\right)$ be a pair of faces in a non-commutative probability space $(\mathcal{A}, \varphi)$.
With notation as in Construction \ref{constructingtheoperatormodel}, consider the (unbounded) operator
\[
\Theta_z := I + \sum_{n\geq 1}\sum_{\alpha : \{1,\ldots, n\} \to I \sqcup J} \kappa_\alpha(z) T_{\alpha},
\]
and for $k \in I \sqcup J$, set $Z_k := L_k^*\Theta_z$.
If $T \in \mathrm{alg}(\{Z_k\}_{k \in I \sqcup J})$ then $\langle T\Omega, \Omega\rangle$ is well-defined.
Moreover, if $\omega(T) = \langle T\Omega,\Omega\rangle$, the joint distribution of $\{Z_k\}_{k \in I \sqcup J}$ with respect to $\omega$ is the same as the joint distribution of $z$ with respect to $\varphi$.
\end{thm}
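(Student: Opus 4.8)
The plan is to reduce everything to the single moment identity
\[
\langle Z_{\alpha(1)} \cdots Z_{\alpha(n)}\Omega, \Omega\rangle = \varphi(z_{\alpha(1)} \cdots z_{\alpha(n)}) = \varphi_\alpha(z),
\qquad \alpha : \{1,\ldots,n\} \to I \sqcup J.
\]
Since $\mathrm{alg}(\{Z_k\}_{k \in I \sqcup J})$ is spanned by $I$ and products of the $Z_k$, the functional $T \mapsto \langle T\Omega, \Omega\rangle$ is determined on it by its values on such products, and by Remark~\ref{moebius_power} the right-hand side equals $\sum_{\pi \in BNC(\alpha)} \kappa_\pi(z)$; so it suffices to show that the left-hand side is a well-defined finite scalar equal to $\sum_{\pi \in BNC(\alpha)} \kappa_\pi(z)$. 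For well-definedness I would expand $Z_{\alpha(t)} = L^*_{\alpha(t)} + \sum_\beta \kappa_\beta(z)\, L^*_{\alpha(t)} T_\beta$ and distribute the product over $\Omega$. A resulting term is specified by a set $S \subseteq \{1,\ldots,n\}$ of indices at which a creation summand $T_{\beta_t}$ is used, together with the shapes $\beta_t$ and the interleavings inside the maps $\Sigma$; since $T_\beta$ raises tensor degree by $|\beta|$ while each $L^*_{\alpha(t)}$ lowers it by $1$, a term can contribute to the $\Omega$-component only if its net degree change is zero, that is $\sum_{t \in S} |\beta_t| = n$, and the labels occurring in each $\beta_t$ are then forced to coincide with $\alpha$-values at later indices so that the created factors are actually annihilated. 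Only finitely many terms meet these constraints, so the $\Omega$-coefficient is a finite sum; the same degreewise bookkeeping lets one interpret $\Theta_z$, and hence each $Z_k$, as a well-defined map on the algebraic completion $\prod_{k \geq 0} (\mathbb{C}^{I \sqcup J})^{\otimes k}$ of $\mathcal H$.

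Next I would organise the surviving terms using the skeleton picture of Subsection~\ref{constructingtheoperatormodel}. Applying $Z_{\alpha(1)} \cdots Z_{\alpha(n)}$ to $\Omega$ and reading from the right, each partial product $Z_{\alpha(t)} \cdots Z_{\alpha(n)}\Omega$ is a scalar (a product of chosen cumulants) times a partially completed skeleton: choosing the $T_{\beta_t}$-summand of $Z_{\alpha(t)}$ glues the empty skeleton of shape $\beta_t$ into the current diagram in one of the admissible positions produced by $\Sigma$, with its lowest node immediately closed by the accompanying $L^*_{\alpha(t)}$ --- nonzero precisely when that node's label is $\alpha(t)$; choosing the $L^*_{\alpha(t)}$-summand instead closes the current lowest open node, which survives precisely when its label is $\alpha(t)$ and which places $t$ in an already-present block. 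A term contributes to $\langle\,\cdot\,\Omega,\Omega\rangle$ exactly when its final skeleton is completed; completed skeletons correspond bijectively to $BNC(\alpha)$, and each is hit by exactly one term, the interleavings and the shapes $\beta_t$ being recoverable from the completed diagram --- the bi-free analogue of the $NC(n)$-correspondence for Nica's model, the extra interleaving freedom of $\Sigma$ being matched by the extra freedom bi-non-crossing diagrams allow in placing the right nodes of a block between its ribs. Under this bijection the scalar attached to the term for $\pi = \{V_1,\ldots,V_k\}$ is a product $\prod_{V \in \pi} \kappa_{\beta_V}(z)$ of one cumulant per block, where $\beta_V$ records the labels of the nodes of $V$ in the order in which the construction of the relevant $T_{\beta_V}$ placed them.

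The step I expect to be the main obstacle is showing that $\prod_{V \in \pi} \kappa_{\beta_V}(z)$ equals $\kappa_\pi(z) = \prod_{V \in \pi} \kappa_{\chi|_V}\big((z_{\alpha(1)},\ldots,z_{\alpha(n)})|V\big)$, i.e. that the recording order $\beta_V$ gives the same cumulant as the canonical order coming from $s_\alpha$. In the all-left case this is immediate, since there $T_\alpha$ acts as $L_{\alpha(n)} \cdots L_{\alpha(1)}$ and the construction is literally Nica's model; in general the maps $\Sigma$ never reorder the nodes of a block among themselves, but they may move them relative to nodes of other blocks --- and then only by transposing consecutive nodes that lie in different blocks and on different sides, since $\Sigma$ only ever interleaves left nodes with right nodes. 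Now $\kappa_\chi$ is invariant under exactly such transpositions of its arguments (this is the symmetry forced by the reindexing $\sigma_\chi$ defining $\mathcal P^{(\chi)}(n)$, and is the move exploited in Lemma~\ref{changingorderofaleftandright}), so $\kappa_{\beta_V}(z) = \kappa_{\chi|_V}((z_{\alpha(1)},\ldots,z_{\alpha(n)})|V)$. Granting this, together with the routine checks that each $T_\alpha$ depends only on the sides and labels of the open nodes --- so that it is well defined independently of which partial skeletons produced a given vector --- and that a string of annihilation operators completes at most one skeleton, one obtains
\[
\langle Z_{\alpha(1)} \cdots Z_{\alpha(n)}\Omega, \Omega\rangle = \sum_{\pi \in BNC(\alpha)} \kappa_\pi(z) = \varphi_\alpha(z)
\]
for every $\alpha$, which is precisely the claimed equality of the joint distribution of $\{Z_k\}_{k\in I\sqcup J}$ (with $\{Z_i\}_{i\in I}$ and $\{Z_j\}_{j\in J}$ declared left and right) with that of $z$. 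A structurally different route would be to take Nica's model as the base case and transport the identity through the two moves ``turn a left node into a right node'' and ``transpose an adjacent left--right pair'', mirroring Lemmas~\ref{changingalefttoaright} and~\ref{changingorderofaleftandright}, but the direct skeleton argument above is the more transparent one.
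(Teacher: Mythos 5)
Your overall approach is the same as the paper's: expand each $Z_{\alpha(t)}$ into a sum of summands $L^*_{\alpha(t)}$ and $\kappa_\beta(z)\,L^*_{\alpha(t)}T_\beta$, track the resulting partially completed skeletons, observe that only completed skeletons contribute to $\langle\,\cdot\,\Omega,\Omega\rangle$, and establish a bijection between completed skeletons and $BNC(\alpha)$ with the weight of a skeleton for $\pi$ equal to $\kappa_\pi(z)$. The finiteness/degree-bookkeeping argument for well-definedness is also in the spirit of what the paper leaves implicit.

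However, the step you flag as ``the main obstacle'' --- that the recording order $\beta_V$ might give a different cumulant from the canonical order --- is not actually an obstacle, and the claim you invoke to resolve it is false. The canonical per-block cumulant from Remark~\ref{moebius_power} is $\kappa_{\pi|_V}(z_{\alpha(k_1)},\ldots,z_{\alpha(k_t)})$ with $V = \{k_1 < \cdots < k_t\}$ in \emph{natural index order}, not an order built from $s_\alpha$. And the operator $T_\beta$ only enters the product when all of the following hold: its bottom node is closed by the accompanying $L^*_{\alpha(k_t)}$, forcing $\beta(t) = \alpha(k_t)$; and its remaining open nodes are later closed from bottom to top at indices $k_{t-1} > \cdots > k_1$ (else no completed skeleton results), forcing $\beta(i) = \alpha(k_i)$ for all $i$. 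Crucially, $\Sigma$ never reorders the nodes of a single new block among themselves (as you note), so the arguments of $\kappa_\beta$ are literally $z_{\alpha(k_1)},\ldots,z_{\alpha(k_t)}$ in natural order; hence $\kappa_\beta(z) = \kappa_{\pi|_V}(z)$ \emph{by definition} with no transposition symmetry needed. The ``symmetry of $\kappa_\chi$ under transposing adjacent arguments on different sides'' that you appeal to is not true in general: for $\chi = (\ell,r)$ one has $\kappa_\chi(z_1,z_2) = \varphi(z_1z_2) - \varphi(z_1)\varphi(z_2)$ while for $\hat\chi = (r,\ell)$ one has $\kappa_{\hat\chi}(z_2,z_1) = \varphi(z_2z_1) - \varphi(z_2)\varphi(z_1)$, and these differ whenever $z_1, z_2$ do not commute under $\varphi$. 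The move in Lemma~\ref{changingorderofaleftandright} is a statement about coefficients in a summation over lattices, not about an invariance of the multilinear functional $\kappa_\chi$. Once you drop this detour and simply observe that $\beta$ is forced to satisfy $\beta(i) = \alpha(k_i)$ by the annihilation bookkeeping (which is what the paper does), your argument is sound.
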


Before we begin the proof, we give the following example.
\begin{ex}
\label{exampletoshowoperatormodelworks}
In this example, let $I = \{1\}$ and $J = \{2\}$.
We will examine how the completed skeleton below is constructed for $Z_1Z_2Z_1Z_1Z_2Z_2Z_1Z_2Z_1Z_1$.
\begin{align*}
\begin{tikzpicture}[baseline]
	\node[left] at (-.75, 4.5) {$1$};
	\draw[fill=black] (-.75,4.5) circle (0.05);
	\node[right] at (.5, 4) {$2$};
	\draw[fill=black] (.5,4) circle (0.05);
	\node[left] at (-.75, 3.5) {$1$};
	\draw[fill=black] (-.75,3.5) circle (0.05);
	\node[left] at (-.75, 3) {$1$};
	\draw[fill=black] (-.75,3) circle (0.05);
	\node[right] at (.5, 2.5) {$2$};
	\draw[fill=black] (.5,2.5) circle (0.05);
	\node[right] at (.5, 2) {$2$};
	\draw[fill=black] (.5,2) circle (0.05);
	\node[left] at (-.75, 1.5) {$1$};
	\draw[fill=black] (-.75,1.5) circle (0.05);
	\node[right] at (.5, 1) {$2$};
	\draw[fill=black] (.5,1) circle (0.05);
	\node[left] at (-.75, .5) {$1$};
	\draw[fill=black] (-.75,.5) circle (0.05);
	\node[left] at (-.75,0) {$1$};
	\draw[fill=black] (-.75,0) circle (0.05);
	\draw[thick] (-0.7,4.5) -- (-.25,4.5) -- (-.25, 0.5) -- (-.7, 0.5);
	\draw[thick] (-.25, 4) -- (.45, 4);
	\draw[thick] (-.25, 3.5) -- (-.7, 3.5);
	\draw[thick] (-0.7, 3) -- (-.5, 3) -- (-.5, 1.5) -- (-.7, 1.5);
	\draw[thick] (0.45, 1) -- (.25, 1) -- (.25, 2) -- (.45, 2);
	\draw[thick] (0.45, 2.5) -- (0, 2.5) -- (0, 0) -- (-.7, 0);
	\end{tikzpicture}
\end{align*}
First $\kappa_{(21)}(z) L_1^*T_{(21)}$ is applied to get the partially completed skeleton
\begin{align*}
\begin{tikzpicture}[baseline]
	\node[right] at (.5, .5) {$2$};
	\draw (.5,.5) circle (0.05);
	\node[left] at (-.5,0) {$1$};
	\draw[fill=black] (-.5,0) circle (0.05);
	\draw[thick] (.45,.5) -- (0,.5) -- (0, 0) -- (-.45,0);
	\end{tikzpicture}.
\end{align*}
Then $\kappa_{(1211)}(z) L_1^*T_{(1211)}$ is applied to obtain the following collection of partially completed skeletons
\[
\begin{tikzpicture}[baseline]
	\node[left] at (-.5, 2.5) {1};
	\draw (-.5,2.5) circle (0.05);
	\node[right] at (.5, 2) {2};
	\draw (.5,2) circle (0.05);
	\node[right] at (.5, 1.5) {2};
	\draw (.5,1.5) circle (0.05);
	\node[left] at (-.5, 1) {1};
	\draw(-.5,1) circle (0.05);
	\node[left] at (-.5, .5) {1};
	\draw[fill=black] (-.5,.5) circle (0.05);
	\node[left] at (-.5,0) {1};
	\draw[fill=black] (-.5,0) circle (0.05);
	\draw[thick] (-.45,2.5) -- (-.25,2.5) -- (-0.25, .5) -- (-.45,.5);
	\draw[thick] (-.45,1) -- (-.25,1);
	\draw[thick] (.45,2) -- (-.25,2);
	\draw[thick] (-.45,0) -- (0,0) -- (0, 1.5) -- (.45,1.5);
	\end{tikzpicture}
	\qquad
	\begin{tikzpicture}[baseline]
	\node[left] at (-.5, 2.5) {1};
	\draw (-.5,2.5) circle (0.05);
	\node[right] at (.5, 2) {2};
	\draw (.5,2) circle (0.05);
	\node[left] at (-.5, 1.5) {1};
	\draw(-.5,1.5) circle (0.05);
	\node[right] at (.5, 1) {2};
	\draw (.5,1) circle (0.05);
	\node[left] at (-.5, .5) {1};
	\draw[fill=black] (-.5,.5) circle (0.05);
	\node[left] at (-.5,0) {1};
	\draw[fill=black] (-.5,0) circle (0.05);
	\draw[thick] (-.45,2.5) -- (-.25,2.5) -- (-0.25, .5) -- (-.45,.5);
	\draw[thick] (-.45,1.5) -- (-.25,1.5);
	\draw[thick] (.45,2) -- (-.25,2);
	\draw[thick] (-.45,0) -- (0,0) -- (0, 1) -- (.45,1);
	\end{tikzpicture}.
\]
Applying $\kappa_{(22)}L_2^*T_{(22)}$ then gives the following collection of partially completed skeletons (where the first two below are from the first above and the third below is from the second above)
\[
\begin{tikzpicture}[baseline]
	\node[left] at (-.5, 3.5) {1};
	\draw (-.5,3.5) circle (0.05);
	\node[right] at (.5, 3) {2};
	\draw (.5,3) circle (0.05);
	\node[right] at (.5, 2.5) {2};
	\draw (.5,2.5) circle (0.05);
	\node[right] at (.5, 2) {2};
	\draw (.5,2) circle (0.05);	
	\node[left] at (-.5, 1.5) {1};
	\draw(-.5,1.5) circle (0.05);
	\node[right] at (.5, 1) {2};
	\draw[fill=black] (.5,1) circle (0.05);
	\node[left] at (-.5, .5) {1};
	\draw[fill=black] (-.5,.5) circle (0.05);
	\node[left] at (-.5,0) {1};
	\draw[fill=black] (-.5,0) circle (0.05);
	\draw[thick] (-.45,3.5) -- (-.25,3.5) -- (-0.25, .5) -- (-.45,.5);
	\draw[thick] (-.45,1.5) -- (-.25,1.5);
	\draw[thick] (.45,3) -- (-.25,3);
	\draw[thick] (-.45,0) -- (0,0) -- (0, 2.5) -- (.45,2.5);
	\draw[thick] (.45, 1) -- (.25, 1) -- (.25, 2) -- (.45, 2);
\end{tikzpicture}
\qquad
\begin{tikzpicture}[baseline]
	\node[left] at (-.5, 3.5) {1};
	\draw (-.5,3.5) circle (0.05);
	\node[right] at (.5, 3) {2};
	\draw (.5,3) circle (0.05);
	\node[right] at (.5, 2.5) {2};
	\draw (.5,2.5) circle (0.05);
	\node[left] at (-.5, 2) {1};
	\draw(-.5,2) circle (0.05);
	\node[right] at (.5, 1.5) {2};
	\draw (.5,1.5) circle (0.05);
	\node[right] at (.5, 1) {2};
	\draw[fill=black] (.5,1) circle (0.05);
	\node[left] at (-.5, .5) {1};
	\draw[fill=black] (-.5,.5) circle (0.05);
	\node[left] at (-.5,0) {1};
	\draw[fill=black] (-.5,0) circle (0.05);
	\draw[thick] (-.45,3.5) -- (-.25,3.5) -- (-0.25, .5) -- (-.45,.5);
	\draw[thick] (-.45,2) -- (-.25,2);
	\draw[thick] (.45,3) -- (-.25,3);
	\draw[thick] (-.45,0) -- (0,0) -- (0, 2.5) -- (.45,2.5);
	\draw[thick] (.45, 1) -- (.25, 1) -- (.25, 1.5) -- (.45, 1.5);
	\end{tikzpicture}
\qquad
	\begin{tikzpicture}[baseline]
	\node[left] at (-.5, 3.5) {1};
	\draw (-.5,3.5) circle (0.05);
	\node[right] at (.5, 3) {2};
	\draw (.5,3) circle (0.05);
	\node[left] at (-.5, 2.5) {1};
	\draw(-.5,2.5) circle (0.05);
	\node[right] at (.5, 2) {2};
	\draw (.5,2) circle (0.05);
	\node[right] at (.5, 1.5) {2};
	\draw (.5,1.5) circle (0.05);
	\node[right] at (.5, 1) {2};
	\draw[fill=black] (.5,1) circle (0.05);
	\node[left] at (-.5, .5) {1};
	\draw[fill=black] (-.5,.5) circle (0.05);
	\node[left] at (-.5,0) {1};
	\draw[fill=black] (-.5,0) circle (0.05);
	\draw[thick] (-.45,3.5) -- (-.25,3.5) -- (-0.25, .5) -- (-.45,.5);
	\draw[thick] (-.45,2.5) -- (-.25,2.5);
	\draw[thick] (.45,3) -- (-.25,3);
	\draw[thick] (-.45,0) -- (0,0) -- (0, 2) -- (.45,2);
	\draw[thick] (.45, 1) -- (.25, 1) -- (.25, 1.5) -- (.45, 1.5);
	\end{tikzpicture}
\]
and applying $\kappa_{(11)}L_1^*T_{(11)}$ then gives the following collection of partially completed skeletons (where the first below is from the first above, the second and third below are from the second above, and the last three are from the third above).
\[
\begin{tikzpicture}[baseline]
	\node[left] at (-.75, 4.5) {1};
	\draw (-.75,4.5) circle (0.05);
	\node[right] at (.5, 4) {2};
	\draw (.5,4) circle (0.05);
	\node[right] at (.5, 3.5) {2};
	\draw (.5,3.5) circle (0.05);
	\node[right] at (.5, 3) {2};
	\draw (.5,3) circle (0.05);	
	\node[left] at (-.75, 2.5) {1};
	\draw(-.75,2.5) circle (0.05);
	\node[left] at (-.75, 2) {1};
	\draw(-.75,2) circle (0.05);
	\node[left] at (-.75, 1.5) {1};
	\draw[fill=black](-.75,1.5) circle (0.05);
	\node[right] at (.5, 1) {2};
	\draw[fill=black] (.5,1) circle (0.05);
	\node[left] at (-.75, .5) {1};
	\draw[fill=black] (-.75,.5) circle (0.05);
	\node[left] at (-.75,0) {1};
	\draw[fill=black] (-.75,0) circle (0.05);
	\draw[thick] (-.7,4.5) -- (-.25,4.5) -- (-0.25, .5) -- (-.7,.5);
	\draw[thick] (-.7,2.5) -- (-.25,2.5);
	\draw[thick] (.45,4) -- (-.25,4);
	\draw[thick] (-.7,0) -- (0,0) -- (0, 3.5) -- (.45,3.5);
	\draw[thick] (.45, 1) -- (.25, 1) -- (.25, 3) -- (.45, 3);
	\draw[thick] (-.7, 1.5) -- (-.5, 1.5) -- (-.5, 2) -- (-.7, 2);
\end{tikzpicture}
\qquad
\begin{tikzpicture}[baseline]
	\node[left] at (-.75, 4.5) {1};
	\draw (-.75,4.5) circle (0.05);
	\node[right] at (.5, 4) {2};
	\draw (.5,4) circle (0.05);
	\node[right] at (.5, 3.5) {2};
	\draw (.5,3.5) circle (0.05);
	\node[left] at (-.75, 3) {1};
	\draw(-.75,3) circle (0.05);
	\node[right] at (.5, 2.5) {2};
	\draw (.5,2.5) circle (0.05);	
	\node[left] at (-.75, 2) {1};
	\draw(-.75,2) circle (0.05);
	\node[left] at (-.75, 1.5) {1};
	\draw[fill=black](-.75,1.5) circle (0.05);
	\node[right] at (.5, 1) {2};
	\draw[fill=black] (.5,1) circle (0.05);
	\node[left] at (-.75, .5) {1};
	\draw[fill=black] (-.75,.5) circle (0.05);
	\node[left] at (-.75,0) {1};
	\draw[fill=black] (-.75,0) circle (0.05);
	\draw[thick] (-.7,4.5) -- (-.25,4.5) -- (-0.25, .5) -- (-.7,.5);
	\draw[thick] (-.7,3) -- (-.25,3);
	\draw[thick] (.45,4) -- (-.25,4);
	\draw[thick] (-.7,0) -- (0,0) -- (0, 3.5) -- (.45,3.5);
	\draw[thick] (.45, 1) -- (.25, 1) -- (.25, 2.5) -- (.45, 2.5);
	\draw[thick] (-.7, 1.5) -- (-.5, 1.5) -- (-.5, 2) -- (-.7, 2);
\end{tikzpicture}
\qquad
\begin{tikzpicture}[baseline]
	\node[left] at (-.75, 4.5) {1};
	\draw (-.75,4.5) circle (0.05);
	\node[right] at (.5, 4) {2};
	\draw (.5,4) circle (0.05);
	\node[right] at (.5, 3.5) {2};
	\draw (.5,3.5) circle (0.05);
	\node[left] at (-.75, 3) {1};
	\draw(-.75,3) circle (0.05);
	\node[left] at (-.75, 2.5) {1};
	\draw(-.75,2.5) circle (0.05);
	\node[right] at (.5, 2) {2};
	\draw (.5,2) circle (0.05);	
	\node[left] at (-.75, 1.5) {1};
	\draw[fill=black](-.75,1.5) circle (0.05);
	\node[right] at (.5, 1) {2};
	\draw[fill=black] (.5,1) circle (0.05);
	\node[left] at (-.75, .5) {1};
	\draw[fill=black] (-.75,.5) circle (0.05);
	\node[left] at (-.75,0) {1};
	\draw[fill=black] (-.75,0) circle (0.05);
	\draw[thick] (-.7,4.5) -- (-.25,4.5) -- (-0.25, .5) -- (-.7,.5);
	\draw[thick] (-.7,3) -- (-.25,3);
	\draw[thick] (.45,4) -- (-.25,4);
	\draw[thick] (-.7,0) -- (0,0) -- (0, 3.5) -- (.45,3.5);
	\draw[thick] (.45, 1) -- (.25, 1) -- (.25, 2) -- (.45, 2);
	\draw[thick] (-.7, 1.5) -- (-.5, 1.5) -- (-.5, 2.5) -- (-.7, 2.5);
\end{tikzpicture}
\qquad
\begin{tikzpicture}[baseline]
	\node[left] at (-.75, 4.5) {1};
	\draw (-.75,4.5) circle (0.05);
	\node[right] at (.5, 4) {2};
	\draw (.5,4) circle (0.05);
	\node[left] at (-.75, 3.5) {1};
	\draw(-.75,3.5) circle (0.05);
	\node[right] at (.5, 3) {2};
	\draw (.5,3) circle (0.05);
	\node[right] at (.5, 2.5) {2};
	\draw (.5,2.5) circle (0.05);	
	\node[left] at (-.75, 2) {1};
	\draw(-.75,2) circle (0.05);
	\node[left] at (-.75, 1.5) {1};
	\draw[fill=black](-.75,1.5) circle (0.05);
	\node[right] at (.5, 1) {2};
	\draw[fill=black] (.5,1) circle (0.05);
	\node[left] at (-.75, .5) {1};
	\draw[fill=black] (-.75,.5) circle (0.05);
	\node[left] at (-.75,0) {1};
	\draw[fill=black] (-.75,0) circle (0.05);
	\draw[thick] (-.7,4.5) -- (-.25,4.5) -- (-0.25, .5) -- (-.7,.5);
	\draw[thick] (-.7,3.5) -- (-.25,3.5);
	\draw[thick] (.45,4) -- (-.25,4);
	\draw[thick] (-.7,0) -- (0,0) -- (0, 3) -- (.45,3);
	\draw[thick] (.45, 1) -- (.25, 1) -- (.25, 2.5) -- (.45, 2.5);
	\draw[thick] (-.7, 1.5) -- (-.5, 1.5) -- (-.5, 2) -- (-.7, 2);
\end{tikzpicture}
\qquad
\begin{tikzpicture}[baseline]
	\node[left] at (-.75, 4.5) {1};
	\draw (-.75,4.5) circle (0.05);
	\node[right] at (.5, 4) {2};
	\draw (.5,4) circle (0.05);
	\node[left] at (-.75, 3.5) {1};
	\draw(-.75,3.5) circle (0.05);
	\node[right] at (.5, 3) {2};
	\draw (.5,3) circle (0.05);
	\node[left] at (-.75, 2.5) {1};
	\draw(-.75,2.5) circle (0.05);
	\node[right] at (.5, 2) {2};
	\draw (.5,2) circle (0.05);	
	\node[left] at (-.75, 1.5) {1};
	\draw[fill=black](-.75,1.5) circle (0.05);
	\node[right] at (.5, 1) {2};
	\draw[fill=black] (.5,1) circle (0.05);
	\node[left] at (-.75, .5) {1};
	\draw[fill=black] (-.75,.5) circle (0.05);
	\node[left] at (-.75,0) {1};
	\draw[fill=black] (-.75,0) circle (0.05);
	\draw[thick] (-.7,4.5) -- (-.25,4.5) -- (-0.25, .5) -- (-.7,.5);
	\draw[thick] (-.7,3.5) -- (-.25,3.5);
	\draw[thick] (.45,4) -- (-.25,4);
	\draw[thick] (-.7,0) -- (0,0) -- (0, 3) -- (.45,3);
	\draw[thick] (.45, 1) -- (.25, 1) -- (.25, 2) -- (.45, 2);
	\draw[thick] (-.7, 1.5) -- (-.5, 1.5) -- (-.5, 2.5) -- (-.7, 2.5);
\end{tikzpicture}
\qquad
\begin{tikzpicture}[baseline]
	\node[left] at (-.75, 4.5) {1};
	\draw (-.75,4.5) circle (0.05);
	\node[right] at (.5, 4) {2};
	\draw (.5,4) circle (0.05);
	\node[left] at (-.75, 3.5) {1};
	\draw(-.75,3.5) circle (0.05);
	\node[left] at (-.75, 3) {1};
	\draw(-.75,3) circle (0.05);
	\node[right] at (.5, 2.5) {2};
	\draw (.5,2.5) circle (0.05);
	\node[right] at (.5, 2) {2};
	\draw (.5,2) circle (0.05);	
	\node[left] at (-.75, 1.5) {1};
	\draw[fill=black](-.75,1.5) circle (0.05);
	\node[right] at (.5, 1) {2};
	\draw[fill=black] (.5,1) circle (0.05);
	\node[left] at (-.75, .5) {1};
	\draw[fill=black] (-.75,.5) circle (0.05);
	\node[left] at (-.75,0) {1};
	\draw[fill=black] (-.75,0) circle (0.05);
	\draw[thick] (-.7,4.5) -- (-.25,4.5) -- (-0.25, .5) -- (-.7,.5);
	\draw[thick] (-.7,3.5) -- (-.25,3.5);
	\draw[thick] (.45,4) -- (-.25,4);
	\draw[thick] (-.7,0) -- (0,0) -- (0, 2.5) -- (.45,2.5);
	\draw[thick] (.45, 1) -- (.25, 1) -- (.25, 2) -- (.45, 2);
	\draw[thick] (-.7, 1.5) -- (-.5, 1.5) -- (-.5, 3) -- (-.7, 3);
	\end{tikzpicture}
\]
Applying $L_2^*$ then gives the following collection of partially completed skeletons (where the first, second, and fourth diagrams above were destroyed)
\[
\begin{tikzpicture}[baseline]
	\node[left] at (-.75, 4.5) {1};
	\draw (-.75,4.5) circle (0.05);
	\node[right] at (.5, 4) {2};
	\draw (.5,4) circle (0.05);
	\node[right] at (.5, 3.5) {2};
	\draw (.5,3.5) circle (0.05);
	\node[left] at (-.75, 3) {1};
	\draw(-.75,3) circle (0.05);
	\node[left] at (-.75, 2.5) {1};
	\draw(-.75,2.5) circle (0.05);
	\node[right] at (.5, 2) {2};
	\draw[fill=black] (.5,2) circle (0.05);	
	\node[left] at (-.75, 1.5) {1};
	\draw[fill=black](-.75,1.5) circle (0.05);
	\node[right] at (.5, 1) {2};
	\draw[fill=black] (.5,1) circle (0.05);
	\node[left] at (-.75, .5) {1};
	\draw[fill=black] (-.75,.5) circle (0.05);
	\node[left] at (-.75,0) {1};
	\draw[fill=black] (-.75,0) circle (0.05);
	\draw[thick] (-.7,4.5) -- (-.25,4.5) -- (-0.25, .5) -- (-.7,.5);
	\draw[thick] (-.7,3) -- (-.25,3);
	\draw[thick] (.45,4) -- (-.25,4);
	\draw[thick] (-.7,0) -- (0,0) -- (0, 3.5) -- (.45,3.5);
	\draw[thick] (.45, 1) -- (.25, 1) -- (.25, 2) -- (.45, 2);
	\draw[thick] (-.7, 1.5) -- (-.5, 1.5) -- (-.5, 2.5) -- (-.7, 2.5);
\end{tikzpicture}
\qquad
\begin{tikzpicture}[baseline]
	\node[left] at (-.75, 4.5) {1};
	\draw (-.75,4.5) circle (0.05);
	\node[right] at (.5, 4) {2};
	\draw (.5,4) circle (0.05);
	\node[left] at (-.75, 3.5) {1};
	\draw(-.75,3.5) circle (0.05);
	\node[right] at (.5, 3) {2};
	\draw (.5,3) circle (0.05);
	\node[left] at (-.75, 2.5) {1};
	\draw(-.75,2.5) circle (0.05);
	\node[right] at (.5, 2) {2};
	\draw[fill=black] (.5,2) circle (0.05);	
	\node[left] at (-.75, 1.5) {1};
	\draw[fill=black](-.75,1.5) circle (0.05);
	\node[right] at (.5, 1) {2};
	\draw[fill=black] (.5,1) circle (0.05);
	\node[left] at (-.75, .5) {1};
	\draw[fill=black] (-.75,.5) circle (0.05);
	\node[left] at (-.75,0) {1};
	\draw[fill=black] (-.75,0) circle (0.05);
	\draw[thick] (-.7,4.5) -- (-.25,4.5) -- (-0.25, .5) -- (-.7,.5);
	\draw[thick] (-.7,3.5) -- (-.25,3.5);
	\draw[thick] (.45,4) -- (-.25,4);
	\draw[thick] (-.7,0) -- (0,0) -- (0, 3) -- (.45,3);
	\draw[thick] (.45, 1) -- (.25, 1) -- (.25, 2) -- (.45, 2);
	\draw[thick] (-.7, 1.5) -- (-.5, 1.5) -- (-.5, 2.5) -- (-.7, 2.5);
\end{tikzpicture}
\qquad
\begin{tikzpicture}[baseline]
	\node[left] at (-.75, 4.5) {1};
	\draw (-.75,4.5) circle (0.05);
	\node[right] at (.5, 4) {2};
	\draw (.5,4) circle (0.05);
	\node[left] at (-.75, 3.5) {1};
	\draw(-.75,3.5) circle (0.05);
	\node[left] at (-.75, 3) {1};
	\draw(-.75,3) circle (0.05);
	\node[right] at (.5, 2.5) {2};
	\draw (.5,2.5) circle (0.05);
	\node[right] at (.5, 2) {2};
	\draw[fill=black] (.5,2) circle (0.05);	
	\node[left] at (-.75, 1.5) {1};
	\draw[fill=black](-.75,1.5) circle (0.05);
	\node[right] at (.5, 1) {2};
	\draw[fill=black] (.5,1) circle (0.05);
	\node[left] at (-.75, .5) {1};
	\draw[fill=black] (-.75,.5) circle (0.05);
	\node[left] at (-.75,0) {1};
	\draw[fill=black] (-.75,0) circle (0.05);
	\draw[thick] (-.7,4.5) -- (-.25,4.5) -- (-0.25, .5) -- (-.7,.5);
	\draw[thick] (-.7,3.5) -- (-.25,3.5);
	\draw[thick] (.45,4) -- (-.25,4);
	\draw[thick] (-.7,0) -- (0,0) -- (0, 2.5) -- (.45,2.5);
	\draw[thick] (.45, 1) -- (.25, 1) -- (.25, 2) -- (.45, 2);
	\draw[thick] (-.7, 1.5) -- (-.5, 1.5) -- (-.5, 3) -- (-.7, 3);
\end{tikzpicture}
\]
and applying $L_2^*$ removes all but the last diagram to give
\[
\begin{tikzpicture}[baseline]
	\node[left] at (-.75, 4.5) {1};
	\draw (-.75,4.5) circle (0.05);
	\node[right] at (.5, 4) {2};
	\draw (.5,4) circle (0.05);
	\node[left] at (-.75, 3.5) {1};
	\draw(-.75,3.5) circle (0.05);
	\node[left] at (-.75, 3) {1};
	\draw(-.75,3) circle (0.05);
	\node[right] at (.5, 2.5) {2};
	\draw[fill=black] (.5,2.5) circle (0.05);
	\node[right] at (.5, 2) {2};
	\draw[fill=black] (.5,2) circle (0.05);	
	\node[left] at (-.75, 1.5) {1};
	\draw[fill=black](-.75,1.5) circle (0.05);
	\node[right] at (.5, 1) {2};
	\draw[fill=black] (.5,1) circle (0.05);
	\node[left] at (-.75, .5) {1};
	\draw[fill=black] (-.75,.5) circle (0.05);
	\node[left] at (-.75,0) {1};
	\draw[fill=black] (-.75,0) circle (0.05);
	\draw[thick] (-.7,4.5) -- (-.25,4.5) -- (-0.25, .5) -- (-.7,.5);
	\draw[thick] (-.7,3.5) -- (-.25,3.5);
	\draw[thick] (.45,4) -- (-.25,4);
	\draw[thick] (-.7,0) -- (0,0) -- (0, 2.5) -- (.45,2.5);
	\draw[thick] (.45, 1) -- (.25, 1) -- (.25, 2) -- (.45, 2);
	\draw[thick] (-.7, 1.5) -- (-.5, 1.5) -- (-.5, 3) -- (-.7, 3);
	\end{tikzpicture}.
\]
Applying $L_1^*L_2^*L_1^*L_1^*$ then gives us the desired diagram.
We also see the diagram was weighted by 
$$\kappa_{(21)}(z)\kappa_{(1211)}(z)\kappa_{(22)}\kappa_{(11)}(z)$$
which is the correct product of bi-free cumulants for this bi-non-crossing partition.
\end{ex}

\begin{proof}[Proof of Theorem \ref{operatormodelforapairoffaces}]
Let $\alpha : \{1, \ldots, n\} \to I \sqcup J$.
To see that
\[
\omega(Z_{\alpha(1)} \cdots Z_{\alpha(n)}) = \varphi(z_{\alpha(1)} \cdots z_{\alpha(n)}),
\]
we must demonstrate that the sum of over all
\[
A_{k} \in \{L_{\alpha(k)}^*\} \cup \{\kappa_\beta(z) L_{\alpha(k)}^*T_{\beta} \, \mid \, \beta : \{1,\ldots, m\} \to I \sqcup J\}
\]
of
\[
\langle A_{1} \cdots A_{n}\Omega, \Omega\rangle
\]
is precisely $\varphi(z_{\alpha(1)} \cdots z_{\alpha(n)})$.
(Note that $L_{\alpha(k)}^*T_\beta = 0$ unless $\beta(m) = \alpha(k)$.)
This suffices as these are precisely the terms that appear in expanding the product $Z_{\alpha(1)}\cdots Z_{\alpha(n)}$.
By construction $A_{1} \cdots A_{n}$ acting on $\Omega$ corresponds to creating a (sequence of) partially completed skeletons and $\langle A_{1} \cdots A_{n}\Omega, \Omega\rangle$ will be the weight of the skeleton if the skeleton is complete and otherwise will be zero.
Since
\[
\varphi(z_{\alpha(1)} \cdots z_{\alpha(n)}) = \sum_{\pi \in BNC(\alpha)}\kappa_{\pi}(z).
\]
it suffices to show that there is a bijection between completed skeletons and elements $\pi$ of $BNC(\alpha)$, and that the weight of the skeleton is the corresponding cumulant.

%%To see that every completed skeleton is in $BNC(\alpha)$, note that if a $T_{\beta}$ is chosen from $Z_{k}$ in the product, a closed node labelled $\alpha(k)$ appears $n-k+1$ nodes from the bottom of the skeleton, while if $L_{\alpha(k)}^*$ appears in the $k^{\mathrm{th}}$ position of the product, a closed node labelled $\alpha(k)$ must appear as the $n-k+1$ from the bottom (as later actions do not modify closed nodes).
%If $A_{k} = \kappa_\beta(z)L_{\beta(m)}^*T_\beta$, a closed node labelled $\alpha(k)$ appears $n-k+1$ nodes from the bottom of the skeleton (at the base of a block corresponding to $\beta$), while if $A_{k} = L_{\alpha(k)}^*$, a closed node must still appear $n-k+1$ from the bottom.
%In particular, this node must be on the appropriate side given $\alpha(k)$.
%Moreover, since $L_{\beta(m)}^*T_\beta$ creates one closed node (corresponding to the value $\beta$ takes on the largest element of its domain) and $m - 1$ open nodes, and since $L_{\alpha(k)}^*$ changes one open node to a closed node that must be labelled $\alpha(k)$, we see that each completed skeleton must be in $BNC(\alpha)$.
Observe that after $A_k$ is applied, the bottom $n-k+1$ nodes of the partially completed skeleton will be closed, as $A_k$ itself either closed an open node which was already present or added a new block containing one closed node and zero or more open nodes.
In particular, the $(n-k+1)$-th node from the bottom must be on the side corresponding to $\alpha(k)$ since it was closed by $L_{\alpha(k)}^*$.
Thus when we have applied $A_1\cdots A_n$, any skeleton surviving has precisely $n$ nodes and structure arising from $\alpha$.

From a bi-non-crossing partition $\pi \in BNC(\alpha)$, we can recover the choice of $A_1, \ldots, A_n$ which produces it.
To do so, for each block $V = \set{k_1 < \ldots < k_t}$, we let $A_{k_i} = L_{k_i}^*$ for $i \neq t$, and if $\beta(i) = \alpha(k_i)$, we set $A_{k_t} = \kappa_\beta(z)L_{k_t}^*T_\beta$.
Indeed, the partially created skeletons created by $A_k\cdots A_n$ agree with $\pi$ on the bottom $n-k+1$ nodes.
%By comparing the diagram of $\pi$ to the partially completed skeletons created in this product, and noting that after the application of $A_k\cdots A_n$ the bottom $k$ nodes of all surviving skeletons agree, one verifies that the suriving completed skeleton is precisely the diagram of $\pi$.
Moreover, given any other product $A_1'\cdots A_n'$ which differs from $A_1\cdots A_n$, consider the greatest index $k$ so that $A_k'\neq A_k$.
Then all partially completed skeletons in $A_k'\cdots A_n'$ and $A_k\cdots A_n$ agree in structure for their bottom $n-k$ nodes, while the next either starts a new block in one case but not the other or starts new blocks of different shapes.
Finally, note that if $\beta$ corresponds to the block $V \in \pi$ as above, then $\kappa_\beta(z) = \kappa_{\pi|_V}(z)$ and so the total weight on the skeleton is precisely $\kappa_\pi(z)$.
\end{proof}

\begin{rem}
In Theorem 7.4 of \cite{voiculescu2013freei}, an operator model for the bi-free central limit distributions was given as sums of creation and annihilation operators on a Fock space.
It is interesting that the operator model from Theorem \ref{operatormodelforapairoffaces} uses different operators.
Indeed for $i, i' \in I$ and $j \in J$, one can check that
\[
T_{(i,i')} = \sum_{n\geq 0} \sum_{\alpha : \{1,\ldots, n\} \to J} L_{i'}L_{\alpha(1)} \cdots L_{\alpha(n)} L_{i} L^*_{\alpha(n)} \cdots L^*_{\alpha(1)}
\]
and
\[
T_{(j,i')} = L_{i'}R_j P
\]
where $P$ is the projection onto the Fock subspace of $\mathcal{H}$ generated by $\{e_j\}_{j \in J}$ and $R_j$ is the right creation operator corresponding to $e_j$.
Therefore, if $c_{k_1, k_2} = \varphi(z_{k_1}z_{k_2})$ for $k_1, k_2 \in I \sqcup J$ with $z$ a bi-free central limit distribution, Theorem \ref{operatormodelforapairoffaces} produces the operators
\[
Z_k = L_k^* + \sum_{k' \in I \sqcup J} c_{k', k} L_{k}^*T_{(k',k)}
\]
which are very different from $L_k + L_k^*$ (if $k \in I$) and $R_k+R_k^*$ (if $k \in J$) proposed in \cite{voiculescu2013freei}.
The main issues with the model involving $\{L_i, L^*_i, R_j, R^*_j \, \mid \, i \in I, j \in J\}$ is that the vectors obtained by applying the algebra generated by these operators to $\Omega$ do not generate the full Fock space - indeed they only generate vectors of the form
\[
e_{i_1} \otimes \cdots \otimes e_{i_n} \otimes e_{j_m} \otimes \cdots \otimes e_{j_1}
\]
where $n,m \geq 0$, $i_1, \ldots, i_n \in I$, and $j_1, \ldots, j_m \in J$.
It is not difficult to see that the vectors obtained by the algebra generated  $\{L_i^*, L_j^*, T_{(i,i)}, T_{(j,j)} \, \mid \, i \in I, j \in J\}$ applied to $\Omega$ generate the full Fock space.
\end{rem}

\end{document}